\setlist[enumerate]{leftmargin=.5in}
\setlist[itemize]{leftmargin=.5in}
\newtheorem{theorem}{Theorem}[section]
\newtheorem{lemma}[theorem]{Lemma}
\newtheorem{proposition}[theorem]{Proposition}
\newtheorem{corollary}[theorem]{Corollary}
\theoremstyle{remark}
\newtheorem{remark}[theorem]{Remark}
\newtheorem{example}[theorem]{Example}
\theoremstyle{definition}
\newtheorem{definition}[theorem]{Definition}
\crefname{equation}{eq.}{equations}
\crefname{definition}{def.}{definitions}
\crefname{section}{sec.}{sections}
\Crefname{section}{Sec.}{Sections}
\crefname{subsection}{sec.}{sections}
\Crefname{subsection}{{Sec.}}{sections}
\crefname{lemma}{lem.}{lemmata}
\crefname{corollary}{cor.}{corollaries}
\crefname{proposition}{prop.}{propositions}
\crefname{remark}{rem.}{remarks}
\crefname{enumi}{step}{steps}
\crefname{theorem}{thm.}{theorems}
\crefname{algorithm}{alg.}{algorithms}
\crefname{figure}{fig.}{figures}
\crefname{observation}{obs.}{observations}
\crefname{example}{ex.}{examples}
\newcommand{\email}[1]{{\href{mailto:#1}{#1}}}
\def\NN{{\mathbb N}} \def\ZZ{{\mathbb Z}}
  \def\bm{\boldsymbol}
\def\N{\mathbb{N}}
\def\Z{\mathbb{Z}}
\def\K{\mathbb{K}}
\def\Pr{\mathbb{P}}
\def\P{\mathcal{P}}
\def\A{\mathcal{A}}
\def\MHB{\ensuremath{\mathtt{MHB}}\xspace}
\def\Res{\mathrm{Res}}
\def\Im{\mathrm{Im}}
\def\Ker{\mathrm{Ker}}
\def\eqs{\mathcal{E}}
\def\inDev{\Delta}
\def\nsols{\Upsilon}
\def\wrt{with respect to\xspace}
\def\bezout{B\'ezout\xspace}
\def\mhbb{multihomogeneous \bezout bound\xspace}
\newcommand{\res}{\mathtt{res}\xspace}
\newcommand\possiblebreak{\ifhmode\unskip\space\hfil\penalty0\hfilneg\fi}
\newif\if@borderstar
\def\bordermatrix{\@ifnextchar*{%
  \@borderstartrue\@bordermatrix@i}{\@borderstarfalse\@bordermatrix@i*}%
}
\def\@bordermatrix@i*{\@ifnextchar[{%
  \@bordermatrix@ii}{\@bordermatrix@ii[()]}
}
\def\@bordermatrix@ii[#1]#2{%
  \begingroup
    \m@th\@tempdima8.75\p@\setbox\z@\vbox{%
      \def\cr{\crcr\noalign{\kern 2\p@\global\let\cr\endline }}%
      \ialign {$##$\hfil\kern 2\p@\kern\@tempdima & \thinspace %
      \hfil $##$\hfil && \quad\hfil $##$\hfil\crcr\omit\strut %
      \hfil\crcr\noalign{\kern -\baselineskip}#2\crcr\omit %
      \strut\cr}}%
    \setbox\tw@\vbox{\unvcopy\z@\global\setbox\@ne\lastbox}%
    \setbox\tw@\hbox{\unhbox\@ne\unskip\global\setbox\@ne\lastbox}%
    \setbox\tw@\hbox{%
      $\kern\wd\@ne\kern -\@tempdima\left\@firstoftwo#1%
        \if@borderstar\kern2pt\else\kern -\wd\@ne\fi%
      \global\setbox\@ne\vbox{\box\@ne\if@borderstar\else\kern 2\p@\fi}%
      \vcenter{\if@borderstar\else\kern -\ht\@ne\fi%
        \unvbox\z@\kern-\if@borderstar2\fi\baselineskip}%
        \if@borderstar\kern-2\@tempdima\kern2\p@\else\,\fi\right\@secondoftwo#1 $%
    }\null \;\vbox{\kern\ht\@ne\box\tw@}%
  \endgroup
}
\title{Koszul-type determinantal formulas for families of mixed multilinear
    systems}
  \author{
    Mat\'{i}as R. Bender\thanks{
      Technische Universit\"at Berlin, Institut f\"ur Mathematik,
      Berlin,
      Germany
    (\email{mbender@math.tu-berlin.de}).}
    \and
  Jean-Charles Faug\`ere\thanks{CryptoNext Security and Sorbonne Universit\'e, \textsc{CNRS}, \textsc{INRIA},
    Laboratoire d'Informatique de Paris~6, \textsc{LIP6},
    \'Equipe \textsc{PolSys},
    4 place Jussieu, F-75005, Paris, France
    (\email{jean-charles.faugere@inria.fr}).}
  \and
  Angelos Mantzaflaris\thanks{Inria Sophia-Antipolis M\'editerran\'ee, Universit\'e C\^ote d'Azur, 06902, France (\email{angelos.mantzaflaris@inria.fr})}
  \and
  Elias Tsigaridas\thanks{Inria Paris,  Institut de Mathématiques de Jussieu - Paris Rive Gauche,  Sorbonne Universit\'e and Paris Universit\'e,
    4 place Jussieu, F-75005, Paris, France
    (\email{elias.tsigaridas@inria.fr})}
}
\providecommand{\keywords}[1]
{
  \small	
  \textbf{{Key words.}} \parbox[t]{\linegoal}{#1}
}
\providecommand{\AMS}[1]
{
  \small	
  \textbf{{AMS subject classifications.}} {#1}
}
\begin{document}

\maketitle


\begin{abstract}
  \textit{Effective computation of resultants} is a central problem in
  elimination theory and polynomial system solving. Commonly, we
  compute the resultant as a quotient of determinants of matrices and
  we say that there exists a \textit{determinantal formula} when we
  can express it as a determinant of a matrix whose elements are the
  coefficients of the input polynomials.
  We study the resultant in the context of
  \textit{mixed multilinear polynomial systems}, that is multilinear systems with
  polynomials having 
  different supports, on which determinantal formulas were not known.
  We construct determinantal formulas for two kind of multilinear
  systems related to the \textit{Multiparameter Eigenvalue Problem}
  (MEP): first, when the polynomials agree in all but one block of
  variables; second, when the polynomials are bilinear with different
  supports, related to a bipartite graph.
  We use the \textit{Weyman complex} to construct \textit{Koszul-type
    determinantal formulas} that  generalize Sylvester-type
  formulas. We can use the matrices associated to these formulas to
  solve square systems without computing the resultant.
  The combination of the resultant matrices with the eigenvalue and
  eigenvector criterion for polynomial systems leads to a new
  approach for solving MEP.
\end{abstract}

\medskip

\noindent \keywords{
  Resultant matrix, Multilinear polynomial, Weyman complex, 
  Determinantal formula, Koszul-type matrix, Multiparameter eigenvalue
  problem
  }

\medskip
  
\noindent 
\AMS{
  13P15
  14Q20
  15A18
}


\section{Introduction} 
\label{sec:intro}

One of the main questions in (computational) algebraic geometry is to
decide efficiently when an overdetermined polynomial system has a
solution over a projective variety.
The resultant answers this question. The resultant is a
multihomogeneous polynomial in the coefficients of the polynomials of
the system that vanishes if and only if the system has a solution. We
can also use it to solve square systems.
When we restrict the supports of the input polynomials to make them
sparse, we have an analogous concept called the sparse resultant
\cite{gelfand2008discriminants}.  The sparse resultant is one of the
few tools we can use to solve systems taking into account the sparsity
of the support of the polynomials. Hence, its efficient computation is
fundamental in computational algebraic geometry.

We are interested in the computation of the multiprojective resultant,
as it is defined in
\cite{dandrea_heights_2013,dandrea_poisson_2015,remond_elimination_2001},
of sparse systems given by a particular kind of multilinear
polynomials.
To define the multiprojective resultant as a single polynomial, we
restrict ourselves to systems where the number of equations is one
greater than the dimension of the ambient multiprojective space.
In
what follows, we refer to this specific situation as an overdetermined system%
\footnote{For
  general overdetermined systems, there exists the concept of  \emph{resultant system}, see
  \cite[Sec.~16,5]{waerden_algebra_1991}. }.
In general, we compute the resultant of a polynomial system
$(f_0,\dots,f_n)$ as a quotient of determinants of two matrices whose
elements are polynomials in the coefficients of the input polynomials
\cite{chtcherba2000conditions,chtcherba_constructing_2004,dandrea_macaulay_2002,dandrea2001explicit,jouanolou1997formes,kapur1997extraneous,macaulay1902some};
thus the best we can hope for are linear polynomials.
A classical example of such a matrix is the Macaulay matrix, which
represents a map 
$(g_0,\dots,g_n) \mapsto \sum_i g_i \, f_i$,
where each $g_i$ is a polynomial in a finite dimensional vector space.
In this case, we say that we have a \emph{Sylvester-type} formula.
Other classical formulas include \emph{B\'{e}zout-} and
\emph{Dixon-type}; nevertheless, the elements of the corresponding
matrices are not linear anymore. We refer to
\cite{emiris1999} and references therein for details.

When we can compute the resultant as the determinant of a matrix
we say
that we have a \textit{determinantal formula}.
Besides general constructions that express any multivariate polynomial
as a determinant of a matrix, see for example
\cite{kaltofen_expressing_2008,valiant_completeness_1979}, we are
interested in formulas such that the row/column dimension of the
corresponding matrix depends linearly on the degree of the resultant.
The existence of such formulas is not known in general.
When we consider \textit{unmixed} multihomogeneous systems, that is
when every polynomial has the same support, these formulas are
well studied, e.g.,~\cite{chtcherba2000conditions,dickenstein2003multihomogeneous,sturmfels1994multigraded,weyman_determinantal_1994}.
However, when the supports are not the same, that is in the case of
\textit{mixed} multihomogeneous systems, there are
very few results.
We know determinantal formulas for scaled multihomogeneous systems~\cite{EmiMan-mhomo-jsc-12}, in which case the supports are scaled
copies of one of them,
for bivariate tensor-product polynomial systems 
\cite{buse_matrix_2020}, and for bilinear systems with two different
supports \cite{bender_bilinear_2018}.
One tool to obtain such formulas is using the Weyman complex
\cite{weyman_calculating_1994}. For an introduction to this
complex we refer to \cite[Sec.~9.2]{weyman2003cohomology} and
\cite[Sec.~2.5.C, Sec.~3.4.E]{gelfand2008discriminants}.

Resultant computations are also useful in solving $0$-dimensional square polynomial systems, say
$(f_1,\dots,f_N)$, taking into account the sparsity; here
"square" refers to systems having $N$ polynomials in $N$ variables.
For example we can use the $u$-resultant or hide a variable; we refer to
\cite[Ch.~3]{cox2006using} for a general introduction.
Whenever a \emph{Sylvester-type formula} is available, through the
resultant matrix, we obtain a matrix representing the multiplication
map by a polynomial $f_0$ in
$\K[\bm{x}] / \langle f_1,\dots,f_N \rangle$.
Then, we solve the
system $(f_1,\dots,f_N)$ by computing the eigenvalues and eigenvectors
of this matrix, e.g.,
\cite{auzinger1988elimination,emiris1996complexity}.
The eigenvalues correspond to the evaluations of $f_0$ at the solutions of
the system. From the eigenvectors, at least when there are no
multiplicities, we can recover the coordinates of the solutions.  For
a generalization of this approach to a broader class of resultant
matrices, that encapsulates Sylvester-type matrices as a special case,
we refer to \cite{bender_bilinear_2018}.

\subsection{Multilinear polynomial systems}
\label{sec:contrib}
We focus on computing determinantal formulas for mixed multilinear polynomial
systems. Besides their mathematical importance, as they are the first
non-trivial case of polynomial systems beyond the linear ones, multilinear
systems are also ubiquitous in applications, e.g.,
cryptography~\cite{FauLevPer-crypto-08,Joux-index-14} and game
theory~\cite{McLennan-random-NE}.

For $A, B \in \NN$ let $\bm{X}_1,\dots,\bm{X}_A$,\!\!
$\bm{Y}_1,\dots,\bm{Y}_B$ be blocks of variables.  We present various
\textit{Koszul-type determinantal formulas} (related to the maps in
the Koszul complex, see \Cref{def:KoszulDetFormula})
for the following 
two kinds of \textit{mixed multilinear polynomials systems}
{\small $(f_0,f_1 \dots f_N)$:}
\begin{itemize}[leftmargin=24pt]
\item \textit{star multilinear systems}: these are polynomial 
  systems $(f_1,\dots,f_N)$, where for each $f_k$, there is a
  $j_k \in [B]$
  such that 

\centerline{  $f_k \in \K[\bm{X}_1]_1 \otimes \dots \otimes \K[\bm{X}_{A}]_1 \otimes \K[\bm{Y}_{j_k}]_1,$}

\item \textit{bipartite bilinear systems}: these are polynomial
  systems $(f_1,\dots,f_N)$, where for each $f_k$, there are
  $i_k \in [A]$ and $j_k \in [B]$ such that
  
\centerline{  $f_k \in \K[\bm{X}_{i_k}]_1 \otimes \K[\bm{Y}_{j_k}]_1 .$}
\end{itemize}
To make the system overdetermined and so, to consider its resultant,
we complement it with
several types of multilinear polynomials $f_0$ (see the beginning of
sections \ref{multilinearMixed:sec:mixedUmixed} and
\ref{sec:bipartiteBilinear}).

  Our first main contribution is the theorem below which is an extract of
  Thm.~{\small\ref{multilinearMixed:thm:caseUnmixedMixed}}~and~{\small\ref{bipartite:thm:caseBipartite}.}

  \begin{theorem}
    Let $\bm{f} := (f_1,\dots,f_N)$ be a \emph{star multilinear
      system} (\Cref{def:multilinearMixed}) or a \emph{bipartite
      bilinear system} (\Cref{def:bipartiteBilinear}). Then, for
    certain choices of multilinear polynomials $f_0$ (we present them at the
    beginning of Sec.~\ref{multilinearMixed:sec:mixedUmixed} and
    \ref{sec:bipartiteBilinear}), there is a square matrix $M$ such
    that,
    \begin{itemize}[leftmargin=24pt]
    \item The resultant of $\bm{f}$ agrees with the
      determinant of the matrix,
      $
      \res(\bm{f}) = \pm \det(M)
      $.
    \item The number of columns/rows of $M$ is
      $degree(\res(\bm{f}))$ and
      its elements are coefficients
      of $\bm{f}$, possibly with a sign change.
    \end{itemize}
    The matrix $M$ corresponds to a \emph{Koszul-type
      determinantal formula} (\Cref{def:KoszulDetFormula}) for
    $\bm{f}$.

\end{theorem}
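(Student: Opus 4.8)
The plan is to treat the two families uniformly, realizing the matrix $M$ as the unique nonzero differential of a Weyman complex that has been forced, by a good choice of twist, to have only two terms. First I would fix the ambient multiprojective space: writing $a_i+1=\#\bm{X}_i$ and $b_j+1=\#\bm{Y}_j$, set $\Pr=\Pr^{a_1}\times\cdots\times\Pr^{a_A}\times\Pr^{b_1}\times\cdots\times\Pr^{b_B}$, so that $\dim\Pr=N=\sum_i a_i+\sum_j b_j$, and record the multidegree $\bm{d}_k$ of each $f_k$: for a star system $\bm{d}_k=(1,\dots,1\mid \bm{e}_{j_k})$ and for a bipartite system $\bm{d}_k=(\bm{e}_{i_k}\mid \bm{e}_{j_k})$ for $k\ge 1$, while $\bm{d}_0$ is the multidegree of the auxiliary polynomial $f_0$. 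The ``certain choices of $f_0$'' in the statement are exactly those multidegrees (together with a genericity hypothesis on its coefficients) for which the vanishing argument below produces a two-term complex; these are the $f_0$ listed at the start of \Cref{multilinearMixed:sec:mixedUmixed} and \Cref{sec:bipartiteBilinear}.

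Next, following Weyman, I would form the Koszul bundle $\mathcal{E}=\bigoplus_{k=0}^{N}\mathcal{O}_{\Pr}(-\bm{d}_k)$, whose Koszul complex $0\to\bigwedge^{N+1}\mathcal{E}\to\cdots\to\bigwedge^{1}\mathcal{E}\to\mathcal{O}_{\Pr}\to 0$ (the differentials being contraction with the section $(f_0,\dots,f_N)$) is exact off the common zero locus of $\bm{f}$, hence generically everywhere. Twisting by $\mathcal{O}_{\Pr}(\bm{\delta})$ and passing to hypercohomology gives the Weyman complex $\mathcal{W}_{\bullet}(\bm{\delta})$, a finite complex of $\K$-vector spaces with $\mathcal{W}_{\nu}=\bigoplus_{p}H^{p-\nu}\bigl(\Pr,\bigwedge^{p}\mathcal{E}\otimes\mathcal{O}_{\Pr}(\bm{\delta})\bigr)$; for $\bm{\delta}$ in the appropriate range this complex is generically exact, its terms are free over the ring of coefficients of $\bm{f}$, and its determinant as a complex equals $\res(\bm{f})$ up to sign --- I would cite \cite{weyman_calculating_1994,weyman2003cohomology,gelfand2008discriminants} for this. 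Since $\bigwedge^{p}\mathcal{E}=\bigoplus_{|S|=p}\mathcal{O}_{\Pr}\bigl(-\sum_{k\in S}\bm{d}_k\bigr)$ over subsets $S\subseteq\{0,\dots,N\}$, and a line bundle on $\Pr^n$ has nonzero cohomology in at most one degree, the Künneth formula turns the computation of $\mathcal{W}_{\bullet}$ into a purely combinatorial question about the multidegrees $\bm{\delta}-\sum_{k\in S}\bm{d}_k$.

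The heart of the argument --- the step I expect to be the main obstacle --- is choosing $\bm{\delta}$ so that for every subset $S$ the twisted line bundle $\mathcal{O}_{\Pr}(\bm{\delta}-\sum_{k\in S}\bm{d}_k)$ is either acyclic or has its unique nonzero cohomology in a degree $q$ with $|S|-q\in\{0,1\}$; then $\mathcal{W}_{\bullet}$ collapses, with no minimization needed, to a two-term complex $\mathcal{W}_{1}\xrightarrow{\ M\ }\mathcal{W}_{0}$. In the star case all $f_k$ ($k\ge 1$) share the same multidegree in the $\bm{X}$-blocks, so that part of the analysis is close to the unmixed situation of \cite{weyman_determinantal_1994,sturmfels1994multigraded} and the subtlety is confined to the $\bm{Y}$-blocks, where the relevant multidegree in $\bm{Y}_j$ depends only on $\#\{k\in S:\ j_k=j\}$; in the bipartite case each $f_k$ behaves like an edge of a bipartite graph and one must control how the edges in $S$ cover the $\bm{X}$- and $\bm{Y}$-blocks, which forces a more delicate, graph-dependent choice of $\bm{\delta}$ and is what restricts the admissible $f_0$. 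Once the two-term complex is in hand the remaining points are routine: (i) the complex being generically exact (the common zero locus of $\bm{f}$ is empty for generic coefficients), $M$ is a square matrix that is generically invertible, so $\dim\mathcal{W}_{1}=\dim\mathcal{W}_{0}$ and $\det(M)$ is a nonzero polynomial; (ii) the determinant of the now two-term complex is $\det(M)$, so $\det(M)=\pm\res(\bm{f})$ by the cited result; (iii) on each $H^0$- or top-cohomology summand the differential $M$ is induced by the Koszul contraction with $(f_0,\dots,f_N)$, hence is a block matrix whose entries are coefficients of $\bm{f}$ up to sign --- that is, $M$ realizes a Koszul-type determinantal formula in the sense of \Cref{def:KoszulDetFormula}, generalizing the Sylvester-type map $(g_0,\dots,g_N)\mapsto\sum_k g_k f_k$, which is the special case $\mathcal{W}_1=\bigoplus_k H^0(\mathcal{O}_{\Pr}(\bm{\delta}-\bm{d}_k))\to H^0(\mathcal{O}_{\Pr}(\bm{\delta}))=\mathcal{W}_0$; and (iv) every entry of $M$ being zero or a single coefficient, $\det(M)$ is homogeneous of degree $\dim\mathcal{W}_{0}$ in the coefficients, so the number of rows of $M$ equals $\deg\res(\bm{f})$. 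Carrying out (i)--(iv) for each family yields \Cref{multilinearMixed:thm:caseUnmixedMixed} and \Cref{bipartite:thm:caseBipartite}, of which the stated theorem is the extract.
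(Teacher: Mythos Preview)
Your approach is exactly the paper's: build the Weyman complex $K_\bullet(\bm{m})$, use K\"unneth and Bott to reduce the vanishing of its terms to combinatorial inequalities on $\bm{m}-\sum_{k\in S}\bm{d}_k$, and choose $\bm{m}$ so that only $K_1$ and $K_0$ survive; then (i)--(iv) follow from \cite[Prop.~5.2.4, Prop.~9.1.3]{weyman2003cohomology} and \Cref{intro:def:KoszulMap} just as you say.

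The gap is that you stop precisely where the work begins. You correctly flag the choice of $\bm{\delta}$ as ``the main obstacle'' and then do not propose one. The paper's content is exactly that choice: for star systems it introduces \emph{determinantal data} $(P,D,c)$ together with a permutation $\sigma$ (\Cref{multilinearMixed:def:admissibleTriplet}) and writes down an explicit $\bm{m}$ depending on them (\Cref{multilinearMixed:thm:caseUnmixedMixed}); for bipartite systems it gives a single explicit $\bm{m}$ (\Cref{bipartite:thm:caseBipartite}). The verification that these $\bm{m}$ force $v\in\{0,1\}$ is not soft: it proceeds by grouping subsets $S$ according to how many polynomials of each support they contain (the sets $\mathcal{I}_{s_0,s_1,\dots,s_B}$ in \eqref{multilinearMixed:eq:definitionSetOfIndices}), splitting the K\"unneth factors into four cases (Y.1), (Y.2), (X.1), (X.2), and running an induction across the $\bm{X}$-blocks to pin down each $r_{\bm{X}_i}$. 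The restrictions on $\bm{d}_0$ (Center-Vertex, Outer-Vertex, Edge, Triangle) and the conditions defining determinantal data are not incidental --- they are exactly what is needed to close the inequalities $0\le v\le 1$ in the boundary cases $c=0$ and $c=A$. Your sketch gives no mechanism for discovering these $\bm{m}$ or for seeing why other choices of $f_0$ fail, so as written it is a correct plan but not yet a proof.
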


The size of the resultant matrix and the degree of the resultant
depend on the multidegree of $f_0$.  We relate the (expected) number
of solutions of $(f_1,\dots,f_N)$ to the degree of the resultant of
$(f_0,f_1,\dots,f_N)$. For {star multilinear systems}, we present
closed formulas for the expected number of solutions of the system
$(f_1,\dots,f_N)$ and the size of the matrices; we also express the
size of the matrix in terms of the number of solutions.
Our techniques to obtain determinantal formulas exploit the
properties and the parametrization, through a carefully chosen degree
vector, of the Weyman complex and are of independent
interest.
These results generalize the ones  in
\cite{bender_bilinear_2018} that correspond to the case
$(A = 1, B = 2)$.

\subsection{Multiparameter Eigenvalue Problem}
\label{sec:MEP}

A motivating application for the systems and the determinantal
formulas that we study comes from the \emph{multiparameter eigenvalue
  problem} (MEP).  We can model MEP using \emph{star multilinear
  systems} or \emph{bipartite bilinear systems}.  The resultant
matrices that we construct together with the eigenvalue and eigenvector
criterion for polynomial systems, e.g.,
\cite{cox2005inDickensteinEmiris}, lead to a new approach for
solving MEP.

MEP generalizes  the classical
eigenvalue problem.
It arises in mathematical physics as a way of solving ordinary and
partial differential equations when we can use separation of variables
(Fourier method) to solve boundary eigenvalue problems. Its
applications, among others,  include the Spectral and the Sturm-Liouville theory
\cite{atkinson_multiparameter_2010,atkinson_multiparameter_1972,gheorghiu_spectral_2012,hochstenbach_jacobi--davidson_2004,volkmer_multiparameter_1988}.
MEP allows us to solve different eigenvalue problems, e.g.,
the polynomial and the quadratic two-parameter eigenvalue problems
\cite{gohberg_matrix_2005,muhic_quadratic_2010}. It is
an old problem; its origins date from the 1920' in the works of
R.~D.~Carmichael \cite{carmichael_boundary_1921} and A.~J.~Pell~\cite{pell_linear_1922}.

The precise definition of the problem is as follows.
Assume $\alpha \in \N$, $\beta_1,\dots,\beta_\alpha \in \N$, and
consider
matrices
$\{M^{(i,j)}\}_{0 \leq j \leq \alpha} \in \K^{(\beta_i + 1) \times
  (\beta_i + 1)}$, where $0 \leq i \leq \alpha$.
The MEP consists in finding
$\bm{\lambda} = (\lambda_0,\dots,\lambda_{\alpha}) \in \Pr^{\alpha}(\K)$
and
$\bm{v_1}\in~\Pr^{\beta_1}(\K),\dots,\bm{v_\alpha} \in
\Pr^{\beta_\alpha}(\K)$ such that
\begin{equation}
  \label{eq:MEP}
    \Big( \sum\nolimits_{j = 0}^\alpha \lambda_j \, M^{(1,j)} \Big) \, \bm{v_1} = 0 
    \ ,\,\dots, \
    \Big( \sum\nolimits_{j = 0}^\alpha \lambda_j \, M^{(\alpha,j)} \Big) \, \bm{v_\alpha} = 0 
    ,
\end{equation}
where $\K$ is an algebraically closed field and $\Pr^n(\K)$
is the (corresponding) projective space of dimension $n \in  \N$.
We refer to $\bm{\lambda}$ as an eigenvalue,
$(\bm{v_1}, \dots, \bm{v_\alpha})$ as an eigenvector,
and to 
$(\bm{\lambda}, \bm{v_1}, \dots, \bm{v_\alpha})$ as an
eigenpair.
For $\alpha = 1$, MEP is  the \emph{generalized
eigenvalue problem}.

To exploit our tools we need to write MEP as a mixed square bilinear
system. For this we introduce the variables
$\bm{X}_1 = (x_0,\dots,x_\alpha)$ to represent the multiparameter
eigenvalues and, for each $1 \leq i \leq \alpha$, the vectors
$\bm{Y}_t = (y_{i,0},\dots,y_{i,\beta_i})$ to represent the
eigenvectors. This way, we obtain a bilinear system
$\bm{F} = (f_{1,0},\dots,f_{\alpha,\beta_\alpha})$, where for each
$1 \leq t \leq \alpha$,
\begin{align} \label{eq:bilinearMEP}
\small
  \begin{array}{c } \displaystyle
    \left( \sum_{j = 0}^\alpha x_j \, M^{(t,j)} \right)
    \cdot
        \begin{pmatrix}
           y_{t,0} \\
           y_{t,1} \\
           \vdots \\
           y_{t,\beta_t}
         \end{pmatrix}
    = 
    \begin{pmatrix}
      \displaystyle
      \sum_{i = 0}^{\beta_t} \sum_{j = 0}^\alpha M^
      {(t,j)}_{i,0} \, x_j \, y_{t,i} \\
           \vdots \\
      \displaystyle
      \sum_{i = 0}^{\beta_t} \sum_{j = 0}^\alpha  M^{(t,j)}_{i,\beta_t} \, x_j \,  y_{t,i} \\
    \end{pmatrix} = 
    \begin{pmatrix}
      \\[0pt]
           f_{t,0} \\[10pt]
           \vdots \\[10pt]
           f_{t,\beta_t} \\[10pt]
         \end{pmatrix}
  \end{array}
\end{align}
and, for each $1 \leq t \leq \alpha$,
$f_{t,0},\dots,f_{t,\beta_t} \in \K[\bm{X}_1]_1 \otimes \K[\bm{Y}_t]_1$.
In this formulation, the system in \eqref{eq:bilinearMEP} is a
particular case of a \textit{star multilinear system}
(\Cref{def:multilinearMixed}) with $A = 1$ and $B = \alpha$,
or a particular case of 
\textit{bipartite bilinear system}
(\Cref{def:bipartiteBilinear}) with $A = 1$ and $B = \alpha$.
There is a one to one correspondence between the eigenpairs of MEP and
solutions of $\bm{F}$, that is
$$
\begin{array}[]{c}
  (\bm{\lambda}, \bm{v}_1, \dots, \bm{v}_\alpha)
  \text{ is an}
  \\
\text{eigenpair of }
\{M^{(i,j)}\}.
\end{array}
\iff
\!\!\!\!
\begin{array}[]{c}
(\bm{\lambda}, \bm{v}_1, \dots, \bm{v}_\alpha) \in \Pr^{\alpha}(\K) \times \Pr^{\beta_1}(\K) \times \dots
  \times \Pr^{\beta_\alpha}(\K) \\  \text{ and }
  \bm{F}(\bm{\lambda}, \bm{v}_1, \dots,
\bm{v}_\alpha) = 0 .
\end{array}
$$

The standard method to solve MEP is Atkinson's \emph{Delta method}
\cite[Ch.~6, 8]{atkinson_multiparameter_1972}. For each
$0 \leq k \leq \alpha$, it considers the overdetermined system
$\bm{F}_k$ resulting from $\bm{F}$ \eqref{eq:bilinearMEP} by
setting $x_k = 0$.
Then, it constructs a matrix $\Delta_k$ which is nonsingular if and
only if $\bm{F}_k$ has no solutions
\cite[Eq.~6.4.4]{atkinson_multiparameter_1972}.
Subsequently, it applies linear algebra operations to these matrices to
solve the MEP $\bm{F}$ \cite[Thm.~6.8.1]{atkinson_multiparameter_1972} .
It turns out that the matrices $\Delta_k$ are determinantal formulas
for the resultants of the corresponding overdetermined systems
$\bm{F}_k$.
The elements of the matrices of the determinantal formulas $\Delta_k$
are polynomials of degree $\alpha$ in the elements of the matrices
$M^{(i,j)}$ \cite[Thm.~8.2.1]{atkinson_multiparameter_1972}.
The Delta method can only solve \emph{nonsingular MEPs}; these are
MEPs where there exists a finite number of eigenvalues
\cite[Ch.~8]{atkinson_multiparameter_1972}.
The main computational disadvantage of Atkinson's Delta method is the
cost of computing the matrices $\Delta_k$. To compute these matrices
one needs to consider multiple Kronecker products corresponding to the
Laplace expansion of a determinant of size $\alpha \times
\alpha$. The interested reader can find more details in
\cite[Sec.~6.2]{atkinson_multiparameter_1972}.

Besides Atkinson's Delta method, there are recent algorithms such as the
\emph{diagonal coefficient homotopy method} \cite{dong_homotopy_2016}
and the \emph{fiber product homotopy method}
\cite{rodriguez_fiber_2018} which exploit homotopy continuation
methods. These methods seems to be slower than the Delta method, but
they can tackle MEPs of bigger size as they avoid the construction of
the matrices $\Delta_k$.
The Delta method and the homotopy approaches can only compute the
\emph{regular} eigenpairs of the MEP, that is, those where the
eigenpair is an isolated solution of \eqref{eq:bilinearMEP}.
In contrast to the Delta method, experimentally and in some cases,
the \emph{fiber product homotopy method} can also solve \emph{singular
  MEPs}, see \cite[Sec.~10]{rodriguez_fiber_2018}.
We can also use general purpose polynomial system solving algorithms,
that exploit sparsity, to tackle MEP.  We refer reader to
\cite{Faugere2011}, see also \cite{spaenlehauer-phd-12}, for an
algorithm to solve unmixed multilinear systems using Gr\"obner bases,
and to \cite{emiris2016bit,emiris_multilinear_2021} using resultants. We also refer to
\cite{bender_towards_2018,bender_groebner_2019} for an algorithm based
on Gr\"{o}bner bases to solve square mixed multihomogeneous systems
and to \cite{bender_toric_2020,telen2019numerical} for a numerical
algorithm to solve these systems using eigenvalue computations.
However, these generic approaches do not fully exploit the structure
of the problem.

Our second main contribution is a new approach to solve MEP based on
the determinantal formulas that we develop (\Cref{alg:solveMEP}).  We
present a novel way to compute the $\Delta$ matrices by avoiding the
expansion to minors.  Moreover, the resultant matrix from which this
construction emanates has elements that are linear in the elements of
$M^{(i,j)}$, is highly structured, and with few non-zero elements
(\Cref{rem:comparison-w-Atkinson}).  As the Delta method and the
homotopy algorithms, it can only solve \emph{nonsingular MEPs} and
recover the \emph{regular eigenvalues}.
Contrary to the general purpose approaches, it fully exploits the
structure of the system and its complexity relates to the number of
eigenpairs.
Our approach involves the computation of (standard) eigenvalues and
eigenvectors to recover the solutions of the MEP, as the classical symbolic-numeric methods, e.g.,
\cite{cox2005inDickensteinEmiris}.
In particular, our method works with exact coefficients as well
  as with approximations. 
The code for solving MEP using these resultant matrices is
freely available at {\small\url{https://mbender.github.io/multLinDetForm/sylvesterMEP.m}}.

\vspace{3pt}
\paragraph{Organization of the paper}
In \Cref{sec:multihomSys} we define the multihomogeneous systems and
introduce some notation. Later, in \Cref{sec:mult-result}, we
introduce the multihomogeneous resultant. In
\Cref{sec:weyman-complex}, we introduce Weyman complexes and then, in
\Cref{sec:koszul-type-formula}, we explain the Koszul-type
formulas. 
In \Cref{multilinearMixed:sec:mixedUmixed}, we define the \emph{star
  multilinear systems} and we construct \emph{Koszul-type determinantal
  formulas} for multihomogeneous systems involving them, and in
\Cref{multilinearMixed:sec:size-determ-form} we study the number of
solutions of the systems and we compare them with the sizes of the
determinant. We also present an example in
\Cref{multilinearMixed:example}.
In \Cref{sec:bipartiteBilinear}, we define the \emph{bipartite
  bilinear systems} and construct \emph{Koszul-type determinantal
  formulas} for multihomogeneous systems involving them.
Finally, in \Cref{sec:ex-MEP}, we present an algorithm and an
example for solving MEP using our \emph{determinantal~formulas}.

\section{Preliminaries}
\label{sec:preliminaries}

For a  number $N \in \N$ we use the abbreviation
$[N] = \{1, \dots, N \}$.

\subsection{Multihomogeneous systems}
\label{sec:multihomSys}

Let $\K$ be an
algebraically closed field, 
$\K^m$ a vector space, and $(\K^m)^*$ its dual space, where $m \in \NN$.
Let $q \in \N$ and consider $q$ positive natural numbers,
$n_1,\dots,n_q \in \N$.  For each $i \in [q]$,
we consider the following sets of $n_i+1$ variables
$$
\bm{x_i} := \{x_{i,0},\dots, x_{i,n_i}\}
\qquad \text{and} \qquad
\bm{\partial x_i} := \{\partial x_{i,0},\dots, \partial x_{i,n_i}\} . 
$$
We identify the polynomial algebra $\K[\bm{x_i}]$ with the symmetric
algebra of the vector space $\K^{n_i+1}$ and the algebra
$\K[\bm{\partial x_i}]$ with the symmetric algebra of
$(\K^{n_i+1})^{*}$.  That is
$$
\K[\bm{x_i}] \cong S \left(\K^{n_i+1} \right) = \bigoplus_{d \in \Z} S_i(d)
\qquad \text{and} \qquad
\K[\bm{\partial x_i}] \cong S \left((\K^{n_i+1})^* \right) = \bigoplus_{d \in \Z} S^*_i(-d).
$$
Therefore, for each $i$, $S_i(d)$ corresponds to the $\K$-vector space
of polynomials in $\K[\bm{x_i}]$ of degree $d$ and $S_i^*(- d)$ to the
$\K$-vector space of polynomials in $\K[\bm{\partial x_i}]$ of degree
$d$. Note that if $d < 0$, then $S_i(d) = S_i^*(-d) = 0$.
 
We identify the monomials in $\K[\bm{x_i}]$ and
$\K[\bm{\partial x_i}]$ with vectors in $\Z^{n_i+1}$. For each
$\bm{\alpha} = (\alpha_0,\dots,\alpha_{n_i}) \in \Z^{n_i+1}$ we set
$
\bm{x_i}^{\bm{\alpha}} := \prod_{j = 0}^{n_i} x_{i,j}^{\alpha_j},
$ and $
\bm{\partial x_i}^{\bm{\alpha}} := \prod_{j = 0}^{n_i} \partial x_{i,j}^{\alpha_j}.
$
We consider the $\ZZ^q$-graded polynomial algebra
$$
\K[\bm{\bar{x}}] :=
\K[\bm{x_1}] \otimes \dots \otimes \K[\bm{x_q}] \cong
\bigoplus_{ (d_1,\dots,d_q)  \in  \Z^q}
S_1(d_1) \otimes \dots \otimes S_q(d_q).
$$
Notice that for each $\bm{d} = (d_1,\dots,d_q) \in \Z^q$,
$\K[\bm{\bar{x}}]_{\bm{d}}$ is the $\K$-vector space of the
multihomogeneous polynomials of multidegree $\bm{d}$, that is  
polynomials in $\K[\bm{\bar{x}}]$ having degree  $d_i$ \wrt the set of
variables $\bm{x_i}$, for each $i \in [q]$. We say that
a polynomial $f \in \K[\bm{\bar{x}}]$ is multihomogeneous of
multidegree $\bm{d} \in \Z^q$, if
$f \in \K[\bm{\bar{x}}]_{\bm{d}} = S_1(d_1) \otimes \dots \otimes
S_q(d_q)$.

\begin{example}
  Consider the two blocks of variables
  $\bm{x_1} \! = \{x_{1,0},x_{1,1}\}$ and
  $\bm{x_2} \! = \{x_{2,0},x_{2,1}, x_{2,2}\}$ and the polynomial
  $x_{1,0}^2 \, x_{1,1} \otimes x_{2,0} \, x_{2,1} + x_{1,0} \,
  x_{1,1}^2 \otimes x_{2,2}^2 \in \K[\bm{x_1},\bm{x_2}]$.  It is
  multihomogeneous of multidegree $(3,2)$ and we write it simply as
  $x_{1,0}^2 \, x_{1,1} \, x_{2,0} \, x_{2,1} + x_{1,0} \, x_{1,1}^2
  \, x_{2,2}^2$.
\end{example}

Following standard notation, we write the monomials of
$\K[\bm{\bar{x}}]$ as $\prod_{i = 1}^{q} \bm{x_{i}}^{\bm{\alpha_i}}$
instead of $\bigotimes_{i = 1}^{q} \bm{x_{i}}^{\bm{\alpha_i}}$.  We
identify these monomials with vectors in
$\Z^{n_1+1} \times \dots \times \Z^{n_q+1}$. For each
$\bm{\alpha} = (\bm{\alpha_1},\dots,\bm{\alpha_q}) \in \Z^{n_1+1}
\times \dots \times \Z^{n_q+1}$ we set
$
\bm{x^{\alpha}} := \prod_{i = 1}^{q} \bm{x_{i}}^{\bm{\alpha}_i}.
$
For each multidegree $\bm{d} \in \Z^q$, we denote by
$\A(\bm{d})$ the set of exponents of the monomials of multidegree
$\bm{d}$, that is 
$
\A(\bm{d}) = \{ \bm{\alpha} \in \Z^{n_1+1} \times \dots \times
\Z^{n_q+1} : \bm{x}^{\bm{\alpha}} \in
\K[\bm{\bar{x}}]_{\bm{d}}\}.
$
The cardinality of $\A(\bm{d})$ is 
$
\#\A(\bm{d}) = \prod_{i = 1}^q {d_i + n_i \choose n_i}.
$

We fix $N = n_1 + \dots + n_q$.  Let $\P := \Pr^{n_1} \times \dots
\times \Pr^{n_q}$ be a multiprojective space over
$\K$, where $\Pr^{n_i} :=
\Pr^{n_i}(\K)$. A system of multihomogeneous polynomials is a set of
multihomogeneous polynomials in $\K[\bm{\bar{x}}]$.
We say that a system of
multihomogeneous polynomials $\{f_1,\dots,f_r\}$ has a solution in
$\P$, if there is $\gamma \in \P$ such that $f_i(\gamma) = 0$, for
every $1 \leq i \leq r$. We call a system of multihomogeneous
polynomials \emph{square} if $r = N$, and
\emph{overdetermined} if $r=N+1$.  Generically, square
multihomogeneous systems have a finite number of solutions over $\P$,
while the overdetermined ones do not have solutions.
The following proposition bounds the number of solutions
of square multihomogeneous polynomial systems.

  \begin{proposition}
    [{{Multihomogeneous \bezout bound, \cite[Example 4.9]{safarevic_varieties_2013}}}]
    \label{intro:thm:bezoutBound}
    Consider a  square multihomogeneous system
    $\bm{f} := \{f_1,\dots,f_{N}\}$ of multidegrees 
    $\bm{d}_1,\dots,\bm{d}_N \in \N^q$ with
    $\bm{d}_i = (d_{i,1},\dots,d_{i,q})$, for each $1 \leq i \leq
    N$. If $\bm{f}$ has a finite number of solutions over $\P$, then
    their number, counted with multiplicities, see
    \cite[Sec.~4.2]{cox2006using}, is the coefficient of the
    monomial $\prod_i Z_i^{n_i}$ in the polynomial
    $
    \prod_{k=1}^{N}
    \left( \sum_{i = 1}^q d_{k,i} \, Z_i \right).
    $
    We refer to this coefficient as the
    \emph{multihomogeneous \bezout bound} and we will write it as
    $\MHB(\bm{d}_1,\dots,\bm{d}_N)$.
  \end{proposition}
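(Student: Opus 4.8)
The plan is to translate the statement into intersection theory on the smooth projective variety $\P = \Pr^{n_1} \times \dots \times \Pr^{n_q}$. First I would recall the well-known presentation of the Chow ring of a product of projective spaces,
\[
A^*(\P) \;\cong\; \Z[h_1,\dots,h_q] \big/ \bigl( h_1^{n_1+1},\dots,h_q^{n_q+1}\bigr),
\]
where $h_i \in A^1(\P)$ is the pullback of the hyperplane class of the $i$-th factor; the top graded piece $A^N(\P)$ is generated by the class of a point, namely $h_1^{n_1}\cdots h_q^{n_q}$, and the degree map sends $h_1^{n_1}\cdots h_q^{n_q} \mapsto 1$ (intersecting $n_i$ generic hyperplanes in the $i$-th factor leaves one reduced point). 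The key geometric input is that a nonzero multihomogeneous polynomial $f_k$ of multidegree $\bm{d}_k = (d_{k,1},\dots,d_{k,q})$ cuts out a hypersurface $V(f_k)$ of pure codimension $1$ whose rational equivalence class is $[V(f_k)] = \sum_{i=1}^q d_{k,i}\, h_i$: indeed $\mathcal{O}_\P(V(f_k))$ is the line bundle $\mathcal{O}(\bm{d}_k)$ with first Chern class $\sum_i d_{k,i} h_i$, equivalently $V(f_k)$ is linearly equivalent to a union of $d_{k,i}$ hyperplanes pulled back from each $\Pr^{n_i}$.

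Next, since $\dim \P = n_1 + \dots + n_q = N$ and we are intersecting exactly $N$ hypersurfaces, the hypothesis that $\bm f$ has a finite number of solutions says precisely that $Z := V(f_1)\cap\dots\cap V(f_N)$ has dimension $0$, i.e.\ the intersection is proper. Bézout's theorem in the Chow ring then yields that the length of the intersection scheme, which is $\sum_{\gamma \in Z} \mathrm{mult}_\gamma(\bm f)$ with the multiplicities of \cite[Sec.~4.2]{cox2006using}, equals $\deg\bigl(\prod_{k=1}^N [V(f_k)]\bigr) = \deg\bigl(\prod_{k=1}^N \sum_{i=1}^q d_{k,i}\, h_i\bigr)$. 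It then remains to evaluate this degree purely algebraically. The product $\prod_{k=1}^N \sum_{i=1}^q d_{k,i} h_i$ is homogeneous of degree $N$ in the $h_i$; modulo the relations $h_i^{n_i+1}=0$ a monomial $h_1^{a_1}\cdots h_q^{a_q}$ with $a_1+\dots+a_q=N$ survives only if $a_i\le n_i$ for every $i$, and since $\sum_i a_i = N = \sum_i n_i$ this forces $a_i = n_i$ for all $i$. Hence the degree equals the coefficient of $h_1^{n_1}\cdots h_q^{n_q}$ in the expansion, that is, exactly the coefficient of $\prod_i Z_i^{n_i}$ in $\prod_{k=1}^N\bigl(\sum_i d_{k,i} Z_i\bigr)$ under the renaming $h_i\leftrightarrow Z_i$, which is $\MHB(\bm{d}_1,\dots,\bm{d}_N)$.

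I expect the delicate point to be the multiplicity bookkeeping in the middle step: matching the scheme-theoretic length (equivalently the intersection number computed in $A^*(\P)$) with the geometric multiplicities in the statement, and checking that no length is lost to embedded or improper components. Because the intersection is proper, the general form of Bézout's theorem (as in Fulton's intersection theory) handles this directly. Alternatively one can bypass it entirely with a conservation-of-number argument: a generic system of multidegrees $\bm{d}_1,\dots,\bm{d}_N$ has exactly $\MHB(\bm{d}_1,\dots,\bm{d}_N)$ simple solutions, and since $\P$ is complete no solution escapes to infinity under specialization, so any specialization with finitely many solutions has exactly that many counted with multiplicity. This is the route of the cited \cite[Example~4.9]{safarevic_varieties_2013}, which one may simply invoke.
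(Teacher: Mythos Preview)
Your argument is correct and is the standard intersection-theoretic proof of the multihomogeneous B\'ezout bound. However, the paper does not actually prove this proposition: it is stated with a citation to \cite[Example~4.9]{safarevic_varieties_2013} and no proof is given. So there is nothing to compare your approach against; you have supplied a proof where the paper simply invokes the literature, and indeed you yourself observe at the end that one may just cite Shafarevich.
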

  
  The \mhbb is generically tight, see
    {\cite[Thm.~1.11]{dandrea_heights_2013}}.  
 
  \subsection{Multihomogeneous resultant}
  \label{sec:mult-result}
  
  We fix $N+1$ multidegrees
  $\bm{d_0},\bm{d_1},\dots,\bm{d_N} \in \N^q$.  To characterize the
  overdetermined multihomogeneous systems of such  multidegrees
  having solutions over $\P$, we parameterize them and we introduce the
  resultant. The latter is a polynomial in the coefficients of the
  polynomials of the system that vanishes if and only if the system has a solution
  over $\P$. Our presentation follows \cite[Ch.~3.2]{cox2006using}
  adapted to the multihomogeneous~case.

\begin{definition}
  [Generic multihomogeneous system]
  \label{intro:def:genMultiHom}
  Consider the set of variables
  $\bm{u} := \{u_{k,\bm{\alpha}} : 0 \leq k \leq N \text{ and }
  \bm{\alpha} \in \A(\bm{d}_k)\}$ and the ring $\Z[\bm{u}]$. The generic
  multihomogeneous polynomial system is the system
  $\bm{F} := \{F_0,\dots,F_N\} \subset \Z[\bm{u}][\bm{\bar{x}}]$, where
  \begin{equation}
    \label{eq:Fk-u}
    F_k := \sum\nolimits_{\bm{\alpha}\in \A({\bm{d}_k})} u_{k,\bm{\alpha}} \bm{x}^{\bm \alpha}.
  \end{equation}
  \end{definition}

  \noindent
  The generic multihomogeneous system $\bm{F}$ parameterizes every
  overdetermined multihomogeneous system with polynomials of
  multidegrees ${\bm{d}_0},{\bm{d}_1},\dots,{\bm{d}_N}$,
  respectively. For each
  $\bm{c} = (c_{k,\bm{\alpha}})_{0 \leq k \leq N, \bm{\alpha} \in
    \A({\bm{d}_k})} \in \Pr^{\#\A({\bm{d}_0})-1} \times \dots \times
  \Pr^{\#\A({\bm{d}_N})-1}$, the specialization of $\bm{F}$ at
  $\bm{c}$, that is $\bm{F}(\bm{c})$, is a multihomogeneous
  polynomial system in $\K[\bm{\bar{x}}]$, say $(f_0,\dots,f_N)$,
  where
  \begin{equation}
    \label{eq:fk-c}
  f_k := F_k(\bm{c}) = \sum\nolimits_{\bm{\alpha}\in \A({\bm{d}_k})} c_{k,\bm{\alpha}} \bm{x}^{\bm \alpha}.
  \end{equation}

  Let $\Omega$ be its \textit{incidence variety}, that is the
  algebraic variety containing the overdetermined multihomogeneous
  systems that have solutions over $\P$ and their solutions,
  $$
  \Omega = \left\{ (\bm{p}, \bm{c}) \in
    \P \times
    (\Pr^{\#\A({\bm{d}_0})-1} \times \dots \times \Pr^{\#\A({\bm{d}_N})-1})
    : (\forall k \in [N]) \, F_k(\bm{c})(\bm{p}) = 0 \right\}.
  $$

  Let $\pi$ be the projection of $\Omega$ to
  $\Pr^{\#\A({\bm{d}_0})-1} \times \dots \times \Pr^{\#\A({\bm{d}_N})-1}$,
  that is
  $\pi(\bm{p}, \bm{c}) = \bm{c}$.
  We can think of $\pi(\Omega)$ as the set of overdetermined
  multihomogeneous polynomial systems with solutions over $\P$. This
  set is an irreducible hypersurface
  \cite[Prop.~3.3.1]{gelfand2008discriminants}. Its defining
  ideal in $\Z[\bm{u}]$ is principal and it is generated by an
  irreducible polynomial $\mathtt{elim} \in \Z[\bm{u}]$
  \cite[Prop.~8.1.1]{gelfand2008discriminants}. In particular,
  it holds
  $$
  \text{The system }\bm{F}(\bm{c})\text{ has a solution over }\P \iff
  \bm{c} \in \pi(\Omega) \iff \mathtt{elim}(\bm{c}) = 0. $$

  Following
  \cite{remond_elimination_2001,dandrea_heights_2013,dandrea_poisson_2015},
  we call $\mathtt{elim} \in \Z[\bm{u}]$ the \textit{eliminant}.
  We warn the reader that the
  polynomial $\mathtt{elim} \in \Z[\bm{u}]$ is called the \textit{resultant}
  in \cite{gelfand2008discriminants}.
  In this work we reserve the word resultant for a power of $\mathtt{elim}$.
  More precisely, the resultant $\res$ is a polynomial in $\Z[\bm{u}]$
  such that
  $\res = \pm \mathtt{elim}^{\mathcal{D}},$
  where $\mathcal{D}$ is the degree of the restriction of $\pi$ to the
  incidence variety $\Omega$, see
  \cite[Def.~3.1]{dandrea_poisson_2015}.
  Consequently, we have 
  \begin{align}
    \label{eq:defResultant}
  \text{The system }\bm{F}(\bm{c})\text{ has a solution over }\P \iff
    \res(\bm{c}) = 0.
  \end{align}
  
  \begin{proposition}[{{\cite[Prop.~3.4]{remond_elimination_2001}}}]
    \label{intro:thm:degRes}
    Let ${\bm{u}_{k}}$ be the blocks of variables in $\bm{u}$ related to
    the polynomial $F_k$, that is
    ${\bm{u}_k} = \{{\bm{u}_{k,\bm{\alpha}}}\}_{\bm{\alpha} \in \A({\bm{d}_i})}.$
    The resultant $\res \in \Z[\bm{u}]$ is a multihomogeneous polynomial
    with respect to the blocks of variables
    ${\bm{u}_0},\dots,{\bm{u}_N}$. The degree of $\res$ \wrt the
    variables ${\bm{u}_k}$ is the multihomogeneous \bezout bound
    (\Cref{intro:thm:bezoutBound}) of a square system with
    multidegrees
    {\small ${\bm{d}_0} \dots {\bm{d}_{k-1}},{\bm{d}_{k+1}} \dots {\bm{d}_N}$,}
    $$\text{degree}(\res,{\bm{u}_k}) = \MHB({\bm{d}_0},\dots,{\bm{d}_{k-1}},{\bm{d}_{k+1}},\dots,{\bm{d}_N}).$$
    The total degree of the resultant is
    $\text{degree}(\res) = \sum_{k = 0}^N \MHB({\bm{d}_0},\dots,{\bm{d}_{k-1}},{\bm{d}_{k+1}},\dots,{\bm{d}_N}).$
  \end{proposition}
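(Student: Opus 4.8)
The statement is classical (it is \cite[Prop.~3.4]{remond_elimination_2001}); the plan is to follow the van~der~Waerden-type route adapted to the multiprojective setting. First one shows that $\res$ is homogeneous separately in each block $\bm{u}_k$; then one computes $\deg(\res,\bm{u}_k)$ by specializing the coefficients of all the $f_i$ with $i\neq k$ to generic values and counting the zeros of the resulting polynomial in $\bm{u}_k$. Once multihomogeneity in the blocks $\bm{u}_0,\dots,\bm{u}_N$ is known, the total-degree formula $\deg(\res)=\sum_k\MHB(\bm{d}_0,\dots,\bm{d}_{k-1},\bm{d}_{k+1},\dots,\bm{d}_N)$ is immediate, since these blocks partition the variable set $\bm{u}$.

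\emph{Multihomogeneity.} Rescaling the coefficients of $f_k$ by a nonzero $\lambda$ replaces $f_k$ by $\lambda f_k$ without changing its zero locus, so by \eqref{eq:defResultant} the vanishing locus of $\res$ in $\Pr^{\#\A(\bm{d}_0)-1}\times\dots\times\Pr^{\#\A(\bm{d}_N)-1}$ is invariant under the torus action rescaling each block $\bm{u}_k$ independently. Since $\mathtt{elim}$ is irreducible and $\res=\pm\mathtt{elim}^{\mathcal{D}}$, invariance of $V(\mathtt{elim})$ under this action forces $\mathtt{elim}$, hence $\res$, to be homogeneous in each $\bm{u}_k$: irreducibility rules out an extraneous monomial factor, and $\res$ genuinely involves every block (if it were free of $\bm{u}_k$, then whether the system has a solution would not depend on $f_k$, which is false). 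In the multihomogeneous setting one moreover checks $\mathcal{D}=1$ — a generic overdetermined system admitting a solution admits a unique one — so that $\res=\pm\mathtt{elim}$ and one may argue directly on $\mathtt{elim}$; the argument below is insensitive to this point.

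\emph{Lower bound on $\deg(\res,\bm{u}_k)$.} Specialize the coefficients of all $f_i$ with $i\neq k$ to generic values, and let $R(\bm{u}_k)$ be the resulting polynomial, homogeneous of degree $e_k:=\deg(\res,\bm{u}_k)$ (generic specialization of the other blocks does not lower the $\bm{u}_k$-degree, by multihomogeneity). The $N$ polynomials $\{f_i:i\neq k\}$ form a square multihomogeneous system of multidegrees $\bm{d}_0,\dots,\bm{d}_{k-1},\bm{d}_{k+1},\dots,\bm{d}_N$; by \Cref{intro:thm:bezoutBound}, together with the genericity assumption (the \mhbb is generically tight, with all solutions simple), it has exactly $m:=\MHB(\bm{d}_0,\dots,\bm{d}_{k-1},\bm{d}_{k+1},\dots,\bm{d}_N)$ isolated solutions $p_1,\dots,p_m\in\P$. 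By \eqref{eq:defResultant}, $R$ vanishes at a point $\bm{c}_k$ if and only if $f_k$ vanishes at some $p_j$, i.e. if and only if $\ell_j(\bm{c}_k)=0$ for some $j$, where $\ell_j(\bm{u}_k)$ is the linear form obtained by evaluating the generic polynomial $F_k$ at $p_j$; this form is nonzero because not all monomials of multidegree $\bm{d}_k$ vanish at a point of $\P$. As the $p_j$ are pairwise distinct and the monomials of multidegree $\bm{d}_k$ embed $\P$ into $\Pr^{\#\A(\bm{d}_k)-1}$, the forms $\ell_1,\dots,\ell_m$ are pairwise non-proportional, so $\prod_j\ell_j$ is squarefree with the same zero set as $R$; by the Nullstellensatz $\prod_j\ell_j\mid R$, whence $e_k\geq m$.

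\emph{Matching upper bound, and the main obstacle.} One must show $R$ equals $\prod_{j=1}^m\ell_j$ up to a nonzero constant. The cleanest route is the Poisson-type product formula for the multiprojective resultant \cite{dandrea_poisson_2015}: with the $f_i$, $i\neq k$, fixed generically it expresses $\res(f_0,\dots,f_N)$ as $\bigl(\text{a factor depending only on }\{f_i:i\neq k\}\bigr)\cdot\prod_{j=1}^m f_k(p_j)$, which is homogeneous of degree exactly $m$ in $\bm{u}_k$; with the lower bound this gives $e_k=m$. Alternatively one argues directly: $R=c\prod_j\ell_j^{a_j}$ with $a_j\geq 1$ and $\sum_j a_j=e_k$, and $a_j>1$ is excluded by checking that over a generic point of the hyperplane $\{\ell_j=0\}$ the overdetermined system has $p_j$ as its unique solution, a smooth point of the incidence variety over which the projection $\pi$ is unramified, so $\mathtt{elim}$ (hence $\res$, since $\mathcal{D}=1$) vanishes there to order exactly one. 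Either way $\deg(\res,\bm{u}_k)=\MHB(\bm{d}_0,\dots,\bm{d}_{k-1},\bm{d}_{k+1},\dots,\bm{d}_N)$, and summing over $k$ gives the total degree. The lower bound and multihomogeneity are soft; the real work — and the place where the bookkeeping of $\mathcal{D}$ and of the multiplicities of the complementary square system must be handled carefully — is the matching upper bound, i.e. showing no zero of $R$ is repeated, via either the Poisson formula or the local transversality analysis at the incidence variety.
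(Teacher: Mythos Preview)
The paper does not prove this proposition at all: it is stated with the citation \cite[Prop.~3.4]{remond_elimination_2001} and used as a black box, so there is no ``paper's own proof'' to compare against. Your sketch is a faithful outline of the classical argument (multihomogeneity from torus-invariance plus irreducibility, degree computation by specializing the complementary square system and invoking the Poisson formula or a local multiplicity-one check), and it is essentially how the result is obtained in the cited reference and in \cite{dandrea_poisson_2015,gelfand2008discriminants}; nothing further is needed here since the authors explicitly defer to the literature.
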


  Usually we compute the resultant as the  quotient of the determinants of two 
  matrices, see \cite[Thm.~3.4.2]{gelfand2008discriminants}. When we
  can compute it as the (exact) determinant of a matrix, then we say
  that we have a \emph{determinantal formula}.
  
  \subsection{Weyman complex}
  \label{sec:weyman-complex}
  
  A complex $K_{\bullet}$ is a sequence of free modules
  $\{K_v\}_{v \in \Z}$ together with morphisms
  $\delta_v : K_v \rightarrow K_{v-1}$, such that the image of
  $\delta_v$ belongs to the kernel of $\delta_{v-1}$, that is
  $(\forall v \in \Z) \; \Im(\delta_v) \subseteq \Ker(\delta_{v-1})$
  or, equivalently, $\delta_{v-1} \circ \delta_{v} = 0$.
  We write  $K_\bullet$ as
  $$ K_\bullet : \cdots
  \xrightarrow{\delta_{v+1}}
  K_v \xrightarrow{\delta_v} K_{v-1} \xrightarrow{\delta_{v-1}}
  \cdots .$$
  The complex is  exact if
  for all 
  $v \in \Z$ it holds $\Im(\delta_v) = \Ker(\delta_{v-1})$.
  A complex is bounded when there are two constants $a$ and $b$ such
  that, for every $v$ such that $v < a$ or $b < v$, it holds $K_v = 0$.
  If we fix a basis for each $K_v$, then we can represent the maps
  $\delta_v$ using matrices.
  For a particular class of bounded complexes, called generically
  exact (see for example \cite[Ap.~A]{gelfand2008discriminants}), we
  can extend the definition of the determinant of matrices to
  complexes. The non-vanishing of the determinant is related to the
  exactness of the complex.
  When there are only two non-zero free modules in the complex (that
  is all the other modules are zero) we can define the determinant of
  the complex if and only if both the non-zero free modules have the
  same rank. In this case, the determinant of the complex reduces to
  the determinant of the (matrix of the) map between the two non-zero
  vector spaces.
  We refer the reader to \cite{anokhina_resultant_2009} for an
  accessible introduction to the determinant of a complex and to
  \cite[Ap.~A]{gelfand2008discriminants} for a complete formalization.

  The Weyman complex
  \cite{weyman_calculating_1994,weyman_determinantal_1994,weyman2003cohomology}
  of an overdetermined multihomogeneous system
  $\bm{f} = (f_0,\dots,f_N)$ in $\K[\bm{\bar{x}}]$ is a bounded complex that is
  exact if and only if the system $\bm{f}$ has no solutions over
  $\P$. More precisely, the determinant of the Weyman complex of the
  multihomogeneous generic system $\bm{F}$ (see
  \Cref{intro:def:genMultiHom}) is well-defined and it is equal to
  the multihomogeneous resultant
  \cite[Prop.~9.1.3]{weyman2003cohomology}.
  If the Weyman complex involves only two non-zero vector spaces, then
  the resultant of $\bm{F}$ is the determinant of the map between
  these spaces. Thus, in this case, there is a determinantal formula
  for the resultant.

    \begin{theorem}[Weyman complex,
      {\cite[Prop.~2.2]{weyman_determinantal_1994}}]
    \label{intro:res:thm:weymanComplex}
    Let $\bm{F} = (F_0,\dots,F_N)$ in $\Z[\bm{u}][\bm{\bar{x}}]$ be a
    generic multihomogeneous system having multidegrees
    ${\bm{d}_0},\dots,{\bm{d}_N}$, respectively (see
    \Cref{intro:def:genMultiHom}). Given a \emph{degree vector}
    $\bm{m} \in \Z^q$, there exists a complex of free
    $\Z[\bm{u}]$-modules $K_\bullet(\bm{m})$, called the \emph{Weyman
      complex} of $\bm{F}$,
      such that the determinant of the complex $K_\bullet(\bm{m})$ agrees
  with the resultant $\res(F_0,\dots,F_N)$.
  $$ K_\bullet(\bm{m}) \! : \! 0 \rightarrow K_{N+1}(\bm{m})
  \xrightarrow{\delta_{N+1}(\bm{m})} \cdots \rightarrow 
  K_1(\bm{m}) \xrightarrow{\delta_1(\bm{m})} K_0(\bm{m}) \xrightarrow{\delta_0(\bm{m})}
  \cdots \rightarrow 
  K_{-N}(\bm{m}) \rightarrow 0.$$

  Moreover, for each $v \in \{-N, \dots, N+1 \}$ the
  $\Z[\bm{u}]$-module $K_v(\bm{m})$ is
  \begin{align}\label{eq:Kvp}
    K_v(\bm{m}) \! := \!\!\bigoplus_{p=0}^{N+1} K_{v,p}(\bm{m}) \otimes_\Z \Z[\bm{u}], \text{ where } \,\,\!
    K_{v,p}(\bm{m}) := \!\!\!\!\!\!\!
    \bigoplus_{\substack{I \subset \{0,\dots,N\} \\ \#I = p}} \!\!\!\!\!
    H^{p-v}_{\P} ( \bm{m} - \sum_{k \in I} {{\bm{d}_k}} ) \otimes \!\! \bigwedge_{k \in I} e_k,
  \end{align}
  the term $H^{p-v}_{\P}(\bm{m} - \sum_{k \in I} {{\bm{d}_k}})$ is the
  $(p-v)$-th sheaf cohomology of $\P$ with coefficients in the sheaf
  $\mathcal O_\P(\bm{m} - \sum_{k \in I} {{\bm{d}_k}})$ whose global
  sections are $\K[\bm{\bar{x}}]_{\bm{m} - \sum_{k \in I} {{\bm{d}_k}}}$ (see
  \cite[Sec.~II.5]{Hart77}),
  \footnote{ The standard notation for the sheaf cohomology
    $H^{p}_{\P}(\bm{m})$, e.g., \cite{Hart77}, is
    $H^{p}({\P},\mathcal L(\sum_i m_i \, D_i))$, where each $D_i$ is
    a Cartier divisor given by the pullback of a hyperplane on
    $\Pr^{n_i}$ (via projection) and $\mathcal L(\sum_i m_i \, D_i)$
    is the line bundle associated to the Cartier divisor
    $(\sum_i m_i \, D_i)$ on $\P$. We use our notation for
    simplicity.
  }
  and the element $\bigwedge_{k \in I} e_k$ is the singleton
  $\{ e_{I_1} \wedge \dots \wedge e_{I_p} \}$, where
  $I_1 < \dots < I_p$ are the elements of $I$, $e_0, \! \dots, \! e_N$ is the
  standard basis of $\K^{N+1}$, and $\wedge$ is the wedge (exterior)
  product.
\end{theorem}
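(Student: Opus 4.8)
The plan is to construct $K_\bullet(\bm m)$ by Weyman's \emph{geometric technique}: exhibit a Koszul resolution of (the cone over) the incidence variety on the product of $\P$ with the space of coefficient vectors, and then take its derived pushforward onto the coefficient space. Set $M_k := \#\A(\bm d_k)$, write $\mathbb A := \operatorname{Spec}\mathbb Z[\bm u] = \prod_{k=0}^{N}\mathbb A^{M_k}$ for the affine space whose points are the coefficient vectors $\bm c$ of \eqref{eq:fk-c}, put $X := \P\times\mathbb A$ with projections $p\colon X\to\P$ and $\pi\colon X\to\mathbb A$, and let $\widetilde\Omega := \{(x,\bm c)\in X : F_k(\bm c)(x)=0\ \text{for all}\ k\}$ be the affine cone over the incidence variety $\Omega$. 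Then $\bm F=(F_0,\dots,F_N)$ is a regular section of the rank-$(N{+}1)$ bundle $\mathcal E := \bigoplus_{k=0}^{N} p^{*}\mathcal O_\P(\bm d_k)$: for each fixed $x\in\P$ the $N{+}1$ linear forms $F_k({-})(x)$ on $\mathbb A$ are nonzero (some monomial of multidegree $\bm d_k$ does not vanish at $x$) and involve the pairwise disjoint blocks $\bm u_0,\dots,\bm u_N$, hence are linearly independent; so $\widetilde\Omega\to\P$ is fibered with fibers linear subspaces of codimension $N{+}1$ in $\mathbb A$, and therefore $\widetilde\Omega$ has codimension $N{+}1=\operatorname{rk}\mathcal E$ in $X$ and the section is regular. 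Consequently the Koszul complex $\mathcal K_\bullet := \bigl(\,\bigwedge^{\bullet}\mathcal E^{\vee},\ \iota_{\bm F}\,\bigr)$ (differential = contraction against $\bm F$) resolves $\mathcal O_{\widetilde\Omega}$; twisting by $p^{*}\mathcal O_\P(\bm m)$, its $p$-th term is $\bigoplus_{\#I=p} p^{*}\mathcal O_\P\bigl(\bm m - \sum_{k\in I}\bm d_k\bigr)\otimes\bigwedge_{k\in I}e_k$, with the summand indexed by $I$ placed in $\bm u$-degree $\sum_{k\in I}e_k$ so that $\iota_{\bm F}$ is homogeneous of degree $0$.

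Next I would set $K_\bullet(\bm m)$ to be a bounded complex of free $\mathbb Z[\bm u]$-modules quasi-isomorphic to $R\pi_{*}\bigl(\mathcal K_\bullet(\bm m)\bigr)$, e.g.\ the complex extracted from the second hypercohomology spectral sequence of $\mathcal K_\bullet(\bm m)$ with respect to $\pi$. Since $\pi$ is projection onto the affine base $\mathbb A$, flat base change (K\"unneth) gives $R^{j}\pi_{*}\bigl(p^{*}\mathcal O_\P(\bm a)\bigr) = H^{j}_\P(\bm a)\otimes_{\mathbb Z}\mathbb Z[\bm u]$, so applying $R\pi_{*}$ to the $p$-th Koszul term yields $\bigoplus_{\#I=p}H^{j}_\P\bigl(\bm m - \sum_{k\in I}\bm d_k\bigr)\otimes_{\mathbb Z}\mathbb Z[\bm u]\otimes\bigwedge_{k\in I}e_k$; reindexing by $v := p-j$ and collecting summands reproduces exactly the modules $K_v(\bm m)=\bigoplus_{p}K_{v,p}(\bm m)$ of \eqref{eq:Kvp}. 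As line bundles on the $N$-dimensional variety $\P$ have cohomology only in degrees $0\le j\le N$ and $0\le p\le N{+}1$, the index $v=p-j$ ranges over $\{-N,\dots,N{+}1\}$, as asserted; the differentials $\delta_v(\bm m)$ are induced by $\iota_{\bm F}$ together with the higher connecting maps of the spectral sequence, hence are $\mathbb Z[\bm u]$-linear with entries built from the coefficients of the $F_k$.

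It then remains to identify $\det K_\bullet(\bm m)$ with $\res(F_0,\dots,F_N)$. Set-theoretically $R\pi_{*}\bigl(\mathcal O_{\widetilde\Omega}\otimes p^{*}\mathcal O_\P(\bm m)\bigr)$ is supported on $\pi(\widetilde\Omega)$, which is the affine cone over the irreducible hypersurface $\pi(\Omega)$, i.e.\ the hypersurface $\{\mathtt{elim}=0\}\subseteq\mathbb A$; hence each cohomology module of the free complex $K_\bullet(\bm m)$ is a torsion $\mathbb Z[\bm u]$-module supported on the irreducible divisor $\{\mathtt{elim}=0\}$. Therefore $K_\bullet(\bm m)$ is generically exact, $\det K_\bullet(\bm m)$ is a well-defined element of $\operatorname{Frac}(\mathbb Z[\bm u])$, and — the only divisor that can occur being $\{\mathtt{elim}=0\}$ — one has $\det K_\bullet(\bm m) = \pm\,\mathtt{elim}^{\,e}$ for some $e\in\mathbb Z$. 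To pin down $e$ I would compute the degree of $\det K_\bullet(\bm m)$ in a block $\bm u_k$: a basis element of $K_{v,p}(\bm m)$ has $\bm u_k$-degree $1$ exactly when $k\in I$, so the Euler-characteristic formula for the determinant of a complex gives
\[
\deg_{\bm u_k}\det K_\bullet(\bm m)\;=\;\pm\sum_{I\ni k}(-1)^{\#I}\,\chi\!\left(\P,\ \mathcal O_\P\bigl(\bm m-\textstyle\sum_{k'\in I}\bm d_{k'}\bigr)\right),
\]
and, applying the $N$ shift-difference operators $\Delta_{\bm d_{k'}}\colon g(\bm a)\mapsto g(\bm a)-g(\bm a-\bm d_{k'})$ (for $k'\ne k$) to the Hilbert polynomial $\chi(\P,\mathcal O_\P(\bm a))=\prod_{i}\binom{a_i+n_i}{n_i}$, which has multidegree $(n_1,\dots,n_q)$, this alternating sum telescopes to $\MHB(\bm d_0,\dots,\bm d_{k-1},\bm d_{k+1},\dots,\bm d_N)$. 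By \Cref{intro:thm:degRes} this equals $\deg_{\bm u_k}\res$; since $\res=\pm\,\mathtt{elim}^{\mathcal D}$ and $\mathtt{elim}$ involves the block $\bm u_k$ whenever this quantity is positive, comparing degrees forces $e=\mathcal D$, so $\det K_\bullet(\bm m)=\pm\res$. (Alternatively $e$ can be fixed by a local computation at a generic point of $\{\mathtt{elim}=0\}$, where the specialized system has a single simple solution.) Finally, the determinant of a bounded generically exact complex of free modules is a quasi-isomorphism invariant, so the answer does not depend on the chosen representative of $R\pi_{*}\mathcal K_\bullet(\bm m)$.

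I expect the substantive part to be this last step — namely, verifying that $\det K_\bullet(\bm m)$ is an honest polynomial (no poles, i.e.\ $e\ge 0$) and that its exponent is precisely $\mathcal D$ rather than some other value. The degree bookkeeping together with \Cref{intro:thm:degRes} resolves it cleanly once non-degeneracy of the multidegrees is granted, but this is exactly where the heart of Weyman's argument (and of the Gelfand--Kapranov--Zelevinsky treatment of resultant complexes) lies. The construction in the first two steps is, by contrast, a formal consequence of the geometric technique once the regularity of the section $\bm F$ has been checked.
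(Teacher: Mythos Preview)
The paper does not prove this theorem at all: it is stated as a direct citation of \cite[Prop.~2.2]{weyman_determinantal_1994} (see also \cite[Sec.~9.2]{weyman2003cohomology}), and the text immediately moves on to consequences. So there is no ``paper's own proof'' to compare against.

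That said, your sketch is essentially the standard construction from those references. The incidence scheme $\widetilde\Omega\subset \P\times\mathbb A$, the regularity of the tautological section of $\bigoplus_k p^{*}\mathcal O_\P(\bm d_k)$ (your disjoint-blocks argument is the right one), the twisted Koszul resolution, and the identification of $R\pi_*$ of each term via flat base change are exactly Weyman's geometric technique specialized to this situation. The reindexing $v=p-j$ and the resulting range $-N\le v\le N{+}1$ are correct.

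Your last paragraph is honest about where the real content lies. Two comments on that step. First, to get $e\ge 0$ you do not need a separate argument: the support of the cohomology of $K_\bullet(\bm m)$ is contained in the hypersurface $\{\mathtt{elim}=0\}$, so by the divisorial description of the determinant of a complex (e.g.\ \cite[Ap.~A]{gelfand2008discriminants}) $\det K_\bullet(\bm m)$ is already a polynomial, not just a rational function. Second, your degree computation in $\bm u_k$ via iterated difference operators on the Hilbert polynomial is correct and is one clean way to identify the exponent with $\mathcal D$; the alternative you mention (a local length computation at a generic point of $\{\mathtt{elim}=0\}$, where the fiber of $\pi|_{\widetilde\Omega}$ has length $\mathcal D$) is closer to how \cite{weyman_determinantal_1994} and \cite{gelfand2008discriminants} actually argue. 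Either route works.
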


    For a multihomogeneous system
  $\bm{f} = (f_0,\dots,f_N)$ in $\K[\bm{\bar{x}}]$ that is the
  specialization of $\bm{F}$ at $\bm{c}$, see \eqref{eq:fk-c}, the
  Weyman complex $K_\bullet(\bm{m} ; \bm{f})$ is the Weyman complex
  $K_\bullet(\bm{m})$ where we specialize each variable
  $u_{k,\bm{\alpha}}$ at $c_{k,\bm{\alpha}} \in \K$.

\begin{proposition}
  [{{\cite[Prop.~2.1]{weyman_determinantal_1994}}}]
  The vector spaces $K_v(\bm{m},\bm{f})$ are independent of the
  specialization of the variables $\bm{u}$, in particular
  $K_v(\bm{m}, \bm{f}) = \bigoplus_{p=0}^{N+1} K_{v,p}(\bm{m}).$
  Hence, the rank of $K_v(\bm{m})$ as a $\Z[\bm{u}]$-module equals the
  dimension of $K_v(\bm{m}, \bm{f})$ as a $\K$-vector space.  The
  differentials $\delta_v(\bm{m},\bm{f})$ depend on the coefficients
  of $\bm{f}$.
\end{proposition}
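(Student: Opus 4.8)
The assertion is, in essence, a bookkeeping consequence of the explicit description of the modules $K_v(\bm{m})$ in \eqref{eq:Kvp}, and the plan is to make that precise. First I would observe that, for every $v$ and $p$, the ``coefficient object'' $K_{v,p}(\bm{m})$ occurring there is built exclusively from data that do not involve the generic variables $\bm{u}$: on one side the sheaf cohomology groups $H^{p-v}_{\P}\!\bigl(\bm{m}-\sum_{k\in I}\bm{d}_k\bigr)$ of line bundles on $\P$, which depend only on the degree vector $\bm{m}$, on the multidegrees $\bm{d}_0,\dots,\bm{d}_N$, and on the dimensions $n_1,\dots,n_q$; on the other side the exterior-algebra generators $\bigwedge_{k\in I}e_k\in\bigwedge^{p}\K^{N+1}$, which depend only on $N$. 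Hence each $K_{v,p}(\bm{m})$ is a fixed $\K$-vector space (equipped with its canonical basis, hence with a canonical $\Z$-lattice), and by \eqref{eq:Kvp} the module $K_v(\bm{m})=\bigl(\bigoplus_{p=0}^{N+1}K_{v,p}(\bm{m})\bigr)\otimes_\Z\Z[\bm{u}]$ is just the free $\Z[\bm{u}]$-module on that lattice.

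Next I would make explicit what ``specialization at $\bm{c}$'' means: it is base change along the ring homomorphism $\varphi_{\bm{c}}\colon\Z[\bm{u}]\to\K$ sending $u_{k,\bm{\alpha}}\mapsto c_{k,\bm{\alpha}}$, that is, $K_v(\bm{m},\bm{f}):=K_v(\bm{m})\otimes_{\Z[\bm{u}]}\K$. Since tensor product commutes with finite direct sums and $(W\otimes_\Z\Z[\bm{u}])\otimes_{\Z[\bm{u}]}\K\cong W\otimes_\Z\K$ for any abelian group $W$, this gives
\[
  K_v(\bm{m},\bm{f})\;\cong\;\bigoplus_{p=0}^{N+1}\bigl(K_{v,p}(\bm{m})\otimes_\Z\K\bigr)\;\cong\;\bigoplus_{p=0}^{N+1}K_{v,p}(\bm{m}),
\]
the last identification using that each $K_{v,p}(\bm{m})$ is already a $\K$-vector space. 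The right-hand side is manifestly independent of $\bm{c}$, which is the first assertion; and since $K_v(\bm{m})$ is free over $\Z[\bm{u}]$ on a basis indexed by any basis of $\bigoplus_{p}K_{v,p}(\bm{m})$, its rank as a $\Z[\bm{u}]$-module equals $\sum_{p=0}^{N+1}\dim_\K K_{v,p}(\bm{m})=\dim_\K K_v(\bm{m},\bm{f})$.

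For the final sentence I would appeal to the construction of the Weyman complex as (a strand of) the hypercohomology of the twisted Koszul complex of the generic polynomials $F_k=\sum_{\bm{\alpha}}u_{k,\bm{\alpha}}\bm{x}^{\bm{\alpha}}$ on $\P$: computing this hypercohomology through a \v{C}ech resolution of an affine cover of $\P$ produces a double complex whose total differential is assembled from the \v{C}ech differentials of the cover --- which do not involve $\bm{u}$ --- and from the Koszul differential $e_k\mapsto F_k$, which is $\Z$-linear in the variables $\bm{u}$. Hence $\delta_v(\bm{m},\bm{f})$ is obtained by substituting $u_{k,\bm{\alpha}}\mapsto c_{k,\bm{\alpha}}$ in matrices whose entries are $\Z$-linear forms in $\bm{u}$, and so it genuinely depends on the coefficients of $\bm{f}$. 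I do not expect any real obstacle here; the only point deserving a line of care is the coexistence of two ground rings in \eqref{eq:Kvp} --- the $\Z$ over which the cohomology coefficient spaces and the exterior powers are formed, and the $\Z[\bm{u}]$ carrying the generic system --- which must be reconciled (via the canonical $\Z$-lattices above) so that ``specialization'' is exactly the base change $\varphi_{\bm{c}}$; equivalently one may work over $\K$ and $\K[\bm{u}]$ throughout, which changes nothing. As a consistency check, that the differentials cannot be independent of the specialization also follows a posteriori from the exactness criterion for the Weyman complex recalled in \Cref{sec:weyman-complex}: $K_\bullet(\bm{m};\bm{f})$ is exact precisely when $\bm{f}$ has no common zero in $\P$, so it is not the same complex for all specializations, although its terms are.
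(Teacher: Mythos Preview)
The paper does not prove this proposition; it simply cites it from \cite[Prop.~2.1]{weyman_determinantal_1994} and moves on. Your argument is a correct and self-contained verification directly from the description \eqref{eq:Kvp}: the terms $K_{v,p}(\bm{m})$ are visibly built from cohomology of fixed line bundles and exterior powers, so they carry no dependence on $\bm{u}$, and specialization is just base change of a free module; the dependence of the differentials on the coefficients is then read off from the Koszul origin of the complex. This is exactly the reasoning behind the cited result, so there is nothing to compare---you have supplied the proof the paper omits.
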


Following \cite{weyman_determinantal_1994}, as $\P$ is a product of
projective spaces, we use K\"unneth formula 
(\Cref{intro:thm:kunnethFormula}) to write the cohomologies in
\eqref{eq:Kvp} as a product of cohomologies of projective spaces, that in
turn we can identify with polynomial rings.

\begin{proposition}[K\"unneth Formula]
  \label{intro:thm:kunnethFormula}
  The cohomologies of the product of projective spaces in each
  $K_{v,p}(\bm{m})$ of \eqref{eq:Kvp} are the direct sum of the tensor
  product of the cohomologies of each projective space, that is
  \begin{align}
    \label{eq:kunneth}
    H^{p-v}_{\P} \Big( \bm{m} - \sum_{k \in I} {\bm{d}_k} \Big) \cong \bigoplus_{r_1 + \dots + r_q = p-v} \bigotimes_{i=1}^q H^{r_i}_{\Pr^{n_i}} \Big( m_i - \sum_{k \in I} d_{k,i} \Big).
  \end{align}
\end{proposition}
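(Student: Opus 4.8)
The plan is to deduce \eqref{eq:kunneth} from the classical K\"unneth formula for coherent sheaf cohomology on a product of schemes over a field, applied to line bundles. The first step is to record the geometric input: for $\bm{e} = (e_1,\dots,e_q) \in \Z^q$ (here $\bm{e} = \bm{m} - \sum_{k \in I}\bm{d}_k$), the sheaf $\mathcal O_\P(\bm{e})$ appearing in \eqref{eq:Kvp} is the external tensor product
$$\mathcal O_\P(\bm{e}) \;\cong\; \pi_1^*\mathcal O_{\Pr^{n_1}}(e_1) \otimes_{\mathcal O_\P} \cdots \otimes_{\mathcal O_\P} \pi_q^*\mathcal O_{\Pr^{n_q}}(e_q),$$
where $\pi_i \colon \P \to \Pr^{n_i}$ is the $i$-th projection; this is immediate from $\mathrm{Pic}(\P) \cong \bigoplus_i \mathrm{Pic}(\Pr^{n_i})$ together with the fact that pullback commutes with tensor products. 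Hence the statement reduces to the general assertion that, for coherent sheaves $\mathcal F_i$ on $\Pr^{n_i}$,
$$H^k\!\big(\P,\ \pi_1^*\mathcal F_1 \otimes \cdots \otimes \pi_q^*\mathcal F_q\big) \;\cong\; \bigoplus_{r_1 + \cdots + r_q = k} H^{r_1}(\Pr^{n_1},\mathcal F_1) \otimes_\K \cdots \otimes_\K H^{r_q}(\Pr^{n_q},\mathcal F_q).$$

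Next I would reduce to two factors by induction on $q$: the inductive step uses that each $H^{r_i}(\Pr^{n_i},\mathcal F_i)$ is a finite-dimensional, hence flat, $\K$-vector space (true since $\Pr^{n_i}$ is projective and $\mathcal F_i$ coherent), so the bilinear K\"unneth map has no correction terms when iterated. For the two-factor case $X \times_\K Y$ of projective $\K$-schemes I would argue via \v{C}ech cohomology. Fix finite affine open covers $\mathcal U = \{U_a\}$ of $X$ and $\mathcal V = \{V_b\}$ of $Y$ (e.g. the standard charts $\{x_{i,j}\neq 0\}$ of each $\Pr^{n_i}$); then $\mathcal U \times \mathcal V = \{U_a \times V_b\}$ is a finite affine open cover of $X \times Y$, and since $X$ and $Y$ are separated, Leray's theorem ensures that \v{C}ech cohomology with respect to $\mathcal U$, $\mathcal V$, $\mathcal U \times \mathcal V$ computes sheaf cohomology of $\mathcal F$, $\mathcal G$, and $p_X^*\mathcal F \otimes p_Y^*\mathcal G$. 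The key algebraic fact is that for affine $U = \mathrm{Spec}\,R$, $V = \mathrm{Spec}\,S$ over $\K$ one has $\Gamma(U \times V,\ p_X^*\mathcal F \otimes p_Y^*\mathcal G) = \Gamma(U,\mathcal F) \otimes_\K \Gamma(V,\mathcal G)$; combined with $(U_a \times V_b)\cap(U_{a'}\times V_{b'}) = (U_a \cap U_{a'})\times(V_b \cap V_{b'})$, this identifies the total \v{C}ech complex of $\mathcal U \times \mathcal V$ with the tensor product over $\K$ of the \v{C}ech complexes of $\mathcal U$ and of $\mathcal V$.

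It then remains to invoke the algebraic K\"unneth theorem for a tensor product of two cochain complexes of $\K$-vector spaces: since $\K$ is a field every module is flat, all $\mathrm{Tor}$ terms vanish, and $H^k(C^\bullet \otimes_\K D^\bullet) \cong \bigoplus_{i+j=k} H^i(C^\bullet)\otimes_\K H^j(D^\bullet)$. Applying this to the \v{C}ech complexes gives the two-factor K\"unneth isomorphism, and unwinding the induction on $q$ yields \eqref{eq:kunneth}. The step I expect to require the most care is the bookkeeping in identifying the product \v{C}ech complex with the tensor product of the factor complexes — matching multi-indices, the signs in the differential of the total complex, and verifying that the isomorphism is natural enough to be compatible with the successive applications in the induction — whereas the cohomological input ($H^{>0}$ of an affine with a quasi-coherent sheaf vanishes) and the flatness-over-a-field that makes K\"unneth degenerate are entirely standard. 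Alternatively, one may simply cite this as in \cite[Sec.~9.2]{weyman2003cohomology}, following \cite{weyman_determinantal_1994}, or as the product formula for cohomology in \cite[Ch.~III]{Hart77}.
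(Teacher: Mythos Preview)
The paper does not actually prove this proposition: it is stated as the classical K\"unneth formula and used as a black box, with the surrounding text simply noting ``Following \cite{weyman_determinantal_1994}, as $\P$ is a product of projective spaces, we use K\"unneth formula \ldots''. Your proposal supplies a full proof where the paper has none, and the argument you sketch --- external tensor product of line bundles, reduction to two factors by induction, identification of the product \v{C}ech complex with the tensor product of the factor \v{C}ech complexes, and the algebraic K\"unneth theorem over a field --- is correct and is exactly the standard route (this is essentially the proof in \cite[Ch.~III, \S9 and Exercise~III.9.3]{Hart77}). Your closing remark that one may simply cite the result is in fact what the paper does.
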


By combining Bott formula and Serre's duality, see
\cite[Sec.~1.1]{okonek_vector_1980}, we can identify the cohomologies of
the previous proposition with the  rings $\K[\bm{x_i}]$ and
$\K[\bm{\partial x_i}]$.
Moreover, for each $p - v$, there is at most one set of values for
$(r_1,\dots,r_q)$ such that
every cohomology in the tensor product of the right hand side of the
previous equation does not vanish. In other words, the right hand side of
\eqref{eq:kunneth} reduces to the tensor product of certain cohomologies
of different projective spaces.

\begin{remark} \label{intro:thm:bottFormula}
    For each $1 \leq i \leq q$, $a \in \Z$, it holds
  \begin{itemize}[leftmargin=.65cm]
  \item $H^0_{\Pr^{n_i}}(a) \cong S_i(a)$, that is the $\K$-vector
    space of the polynomials of degree $a$ in the polynomial
    algebra $\K[{\bm{x}_i}]$.
  \item $H^{n_i}_{\Pr^{n_i}}(a) \cong S_i^*(a + n_i + 1)$, that is the
    $\K$-vector space of the polynomials of degree $a + n_i + 1$ in
    the polynomial algebra $\K[{\bm{\partial x}_i}]$.
  \item If $r_i \not\in \{0,n_i\}$, then $H^{r_i}_{\Pr^{n_i}}(a) \cong 0$.
  \end{itemize}
\end{remark}

\begin{remark}
  \label{intro:thm:possiblesRi}
    For each $1 \leq i \leq q$, if $H^{r_i}_{\Pr^{n_i}}(a) \neq 0$, then
  $r_i \in \{0,n_i\}$. Moreover,
  \begin{itemize}[leftmargin=.65cm]
  \item If $a > -n_i - 1$, then \hfil 
    $\displaystyle H^{r_i}_{\Pr^{n_i}}(a) \neq 0 \iff r_i = 0 \text{ and } a \geq 0.$
  \item If $a < 0$, then  \hfil 
    $\displaystyle
    \qquad H^{r_i}_{\Pr^{n_i}}(a) \neq 0 \iff r_i = n_i  \text{ and } a \leq -n_i - 1.$
  \end{itemize} 
\end{remark}

We define the dual of a complex by dualizing the modules and the
maps. The dual of the Weyman complex is isomorphic to another Weyman
complex. By exploiting Serre's duality, we can construct the degree
vectors of a dual Weyman complex from the degree vector of the primal.
\begin{proposition} 
  [{{\cite[Thm.~5.1.4]{weyman2003cohomology}}}]
  \label{prop:dualDegreeVectors}
  Let $\bm{m}$ and
   $\bm{\bar{m}}$ be any degree vectors such that
   $\bm{m} + \bm{\bar{m}} = \sum_i \bm{d}_i - (n_1 + 1,\dots,n_q + 1)$.
   Then, $K_v(\bm{m}) \cong K_{1-v}(\bm{\bar{m}})^*$ for all $v\in\ZZ$ and
   $K_\bullet(\bm{m})$ is dual to
   $K_\bullet(\bm{\bar{m}})$.
\end{proposition}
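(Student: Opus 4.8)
The plan is to prove the two assertions in turn. First I would establish the purely module-theoretic isomorphism $K_v(\bm{m})\cong K_{1-v}(\bm{\bar m})^{*}$ term by term from the description in \eqref{eq:Kvp}, using Serre duality on $\P$ together with the self-duality of $\bigwedge^{\bullet}\K^{N+1}$. Then I would upgrade it to an isomorphism of complexes by checking that these term-wise identifications are compatible with the differentials, i.e. that the resulting map $\phi_{\bullet}$ intertwines $\delta_v(\bm{m})$ with $\delta_{2-v}(\bm{\bar m})^{*}$ (note $1-(v-1)=2-v$).

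For the module statement, write $\bm{d}_I:=\sum_{k\in I}\bm{d}_k$ for $I\subseteq\{0,\dots,N\}$ and $\bm{\kappa}:=(n_1+1,\dots,n_q+1)$, so $\omega_{\P}=\mathcal O_{\P}(-\bm{\kappa})$ and the hypothesis reads $\bm{m}+\bm{\bar m}=\sum_{k=0}^{N}\bm{d}_k-\bm{\kappa}$. Serre duality on $\P$ (of dimension $N=\sum_i n_i$) gives, for every $\bm{a}$ and $j$, a perfect pairing $H^{j}_{\P}(\bm{a})\otimes H^{N-j}_{\P}(-\bm{a}-\bm{\kappa})\to\K$, hence $H^{j}_{\P}(\bm{a})^{*}\cong H^{N-j}_{\P}(-\bm{a}-\bm{\kappa})$. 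Fix $v$ and $I$ with $\#I=p$, and let $J:=\{0,\dots,N\}\setminus I$, so $\#J=N+1-p=:p'$. On the exterior factor, the top-power pairing $\bigwedge^{p}\K^{N+1}\otimes\bigwedge^{p'}\K^{N+1}\to\bigwedge^{N+1}\K^{N+1}\cong\K$ identifies $\bigwedge_{k\in I}e_k$ with $\big(\bigwedge_{k\in J}e_k\big)^{*}$ up to a sign $\varepsilon_I$, which I absorb into the isomorphism. On the cohomology factor, Serre duality gives
\[
H^{p'-(1-v)}_{\P}\big(\bm{\bar m}-\bm{d}_J\big)^{*}\;\cong\;H^{\,N-p'+1-v}_{\P}\big(-\bm{\bar m}+\bm{d}_J-\bm{\kappa}\big).
\]
The cohomological index is $N-(N+1-p)+1-v=p-v$, and since $\bm{d}_J=\sum_{k=0}^{N}\bm{d}_k-\bm{d}_I$, the twist equals $-\bm{\bar m}+\big(\sum_{k=0}^{N}\bm{d}_k-\bm{\kappa}\big)-\bm{d}_I=\bm{m}-\bm{d}_I$ by the hypothesis. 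So this summand is $H^{p-v}_{\P}(\bm{m}-\bm{d}_I)\otimes\bigwedge_{k\in I}e_k$, exactly the $I$-summand of $K_{v,p}(\bm{m})$ in \eqref{eq:Kvp}. As $I\mapsto I^{c}$ is a bijection between the index sets of cardinality $p$ and $N+1-p$, summing over $p$ and $I$ yields a $\Z[\bm{u}]$-linear isomorphism $\phi_v\colon K_v(\bm{m})\xrightarrow{\sim}K_{1-v}(\bm{\bar m})^{*}$ (all identifications being independent of the $\bm{u}$, since Serre duality and the exterior pairing are).

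For the complex statement, I would use that $K_\bullet(\bm{m})$ represents $R\Gamma_{\P}$ of the twisted Koszul complex of sheaves $\mathcal K^{\bullet}(\bm{m})$ with $\mathcal K^{p}(\bm{m})=\bigwedge^{p}\big(\bigoplus_{k}\mathcal O_{\P}(-\bm{d}_k)\big)\otimes\mathcal O_{\P}(\bm{m})$ and differential the contraction with the tautological section $(F_0,\dots,F_N)$ of $\mathcal E:=\bigoplus_{k}\mathcal O_{\P}(\bm{d}_k)$; indeed the sheaf cohomologies of the $\mathcal K^{p}$ are precisely the terms of \eqref{eq:Kvp} (cf. \Cref{intro:res:thm:weymanComplex}), and $\delta_\bullet(\bm{m})$ is the assembly of the Koszul maps and the connecting homomorphisms. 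Now $\big(\mathcal K^{p}(\bm{m})\big)^{\vee}\cong\bigwedge^{p}\mathcal E\otimes\mathcal O_{\P}(-\bm{m})\cong\bigwedge^{N+1-p}\mathcal E^{\vee}\otimes\det\mathcal E\otimes\mathcal O_{\P}(-\bm{m})$ with $\det\mathcal E=\mathcal O_{\P}(\sum_{k=0}^{N}\bm{d}_k)$; twisting by $\omega_{\P}=\mathcal O_{\P}(-\bm{\kappa})$ turns this, by the hypothesis on $\bm{m}+\bm{\bar m}$, into $\mathcal K^{N+1-p}(\bm{\bar m})$, while the contraction differential dualizes to the wedge differential, which after the reindexing $p\mapsto N+1-p$ is again the Koszul differential. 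Hence $\big(\mathcal K^{\bullet}(\bm{m})\big)^{\vee}\otimes\omega_{\P}\cong\mathcal K^{\bullet}(\bm{\bar m})$ as complexes of sheaves, and applying Grothendieck--Serre duality for $R\Gamma_{\P}$ to this isomorphism gives $K_\bullet(\bm{m})^{*}\cong K_\bullet(\bm{\bar m})$, with the index shift $v\mapsto 1-v$ (and any overall sign) pinned down by the module computation above.

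The step I expect to be the main obstacle is making the differential-level duality precise, i.e. verifying that $\phi_{\bullet}$ intertwines $\delta_v(\bm{m})$ with $\pm\,\delta_{2-v}(\bm{\bar m})^{*}$. This reduces to three compatibilities: (i) the Serre duality pairing is $\K[\bm{x_i}]$-bilinear for the cup-product module structure, so that ``multiplication by $F_k$'' is self-adjoint and the Koszul components dualize correctly; (ii) the pairing is compatible with the connecting homomorphisms (equivalently, with the \v{C}ech differentials), so the remaining components dualize correctly; and (iii) contraction and exterior multiplication on $\bigwedge^{\bullet}\K^{N+1}$ are adjoint for the top-form pairing. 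The genuinely delicate point is the coherence of the signs $\varepsilon_I$ across all of these so that the pieces assemble into an honest chain map; the cleanest way around this bookkeeping — and the route I would ultimately take — is to invoke the functoriality of Grothendieck duality for the structure morphism $\P\to\operatorname{Spec}\Z[\bm{u}]$ applied directly to $\mathcal K^{\bullet}$, which produces the isomorphism of complexes without any ad hoc choice.
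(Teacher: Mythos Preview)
The paper does not supply its own proof of this proposition: it is stated with a direct citation to \cite[Thm.~5.1.4]{weyman2003cohomology} and no further argument, so there is nothing in the paper to compare against. Your sketch is the standard proof one would give (and essentially the one in Weyman's monograph): identify the summands via Serre duality on $\P$ together with the complementation $I\mapsto I^c$ and the top-form pairing on $\bigwedge^\bullet\K^{N+1}$, then upgrade to a chain isomorphism by recognizing $K_\bullet(\bm{m})$ as $R\Gamma_\P$ of the twisted Koszul complex and applying Grothendieck--Serre duality. Your index and twist bookkeeping in the module step is correct, and you are right that the only subtle point is the sign coherence when assembling the term-wise isomorphisms into a chain map; deferring this to the functoriality of Grothendieck duality is the clean way out and is exactly how the reference handles it.
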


\subsection{Koszul-type formula}
\label{sec:koszul-type-formula} 

Our goal is to obtain determinantal formulas
given by matrices whose
elements are linear forms in the coefficients of the input
polynomials, that is linear in $\bm{u}$, see \eqref{eq:Fk-u}.
Hence, by \cite[Prop.~5.2.4]{weyman2003cohomology}, our objective is
to choose a degree vector $\bm{m}$ so that the Weyman complex reduces
to
\begin{align} \label{eq:determinantalMap}
  K_\bullet(\bm{m}) : 0
  \rightarrow K_{v, p+v}(\bm{m}) \otimes \Z[\bm{u}]
  \xrightarrow{\delta_v(\bm{m})}
  K_{v-1, p+v-1}(\bm{m}) \otimes \Z[\bm{u}] \rightarrow 0 \, ,
 \end{align}
 where $p = \sum_{k \in I} n_k$, for some set
 $I \subset \{1,\dots,q\}$. That is, it holds
 $K_{v}(\bm{m}) = K_{v, p+v}(\bm{m}) \otimes \Z[\bm{u}]$,
 $K_{v - 1}(\bm{m}) = K_{v - 1, p+v-1}(\bm{m}) \otimes \Z[\bm{u}]$,
 and, for all $t \not\in \{v-1,v\}$, $K_{t}(\bm{m}) = 0$,
 
 We will describe the map $\delta_v(\bm{m})$ through an auxiliary map
 $\mu$ that acts as multiplication.
 For this, we need to introduce some additional notation.
 This notation is independent from the rest of the paper and the
 readers that are familiar with the Weyman complex can safely skip the
 rest of the section.
 Let $R$ be a ring; for example $R = \Z[\bm{u}]$ or $R = \K$.
  For each $1 \leq i \leq q$, the polynomial ring $R[{\bm{x}_i}]$,
 respectively $R[{\bm{\partial x}_i}]$, is a free $R$-module with
 basis $\{ \bm{x}_i^{\bm\alpha} : \bm{\alpha} \in \A(d), d \in \Z \}$,
 respectively
 $\{ \bm{\partial x}_i^{\bm\alpha} : \bm{\alpha} \in \A(d), d \in \Z \}$.
 We define the bilinear map
 \begin{align}
   \label{eq:mapMui}
 \mu_{(i)} : R[{\bm{x}_i}] \times (R[{\bm{x}_i}] \oplus  R[{\bm{\partial x}_i}])
 \rightarrow R[{\bm{x}_i}] \oplus R[{\bm{\partial x}_i}] ,
 \end{align}
 which acts follows: for each $d_1, d_2 \in \Z$,
 $\bm{\alpha} \in \A(d_1)$ and
 $\bm{\beta}, \bm{\gamma} \in \A(d_2)$, we have
 \begin{align*}
   \mu_{(i)}(\bm{x}_i^{\bm{\alpha}}, \bm{x}_i^{\bm{\gamma}}) =  \; \bm{x}_i^{{\bm{\alpha}} + {\bm{\gamma}}} \,
   \text{ and }\,
   \mu_{(i)}(\bm{x}_i^{\bm{\alpha}}, \bm{\partial x}_i^{\bm{\beta}}) = &
   \left\{
   \begin{array}{l l}
     \bm{\partial x}_i^{{\bm{\beta}} - {\bm{\alpha}}} & \text{if $d_1 \leq d_2$ and ${\bm{\beta}} - {\bm{\alpha}} \in \A(d_1 - d_2)$} \!\!\!\!\!\! \\
     0 & \text{otherwise}.
   \end{array}
         \right. 
   \end{align*}
   The map $\mu_{(i)}$ is graded in the following way, for
   $f \in S_i(d)$ it holds
   $$\mu_{(i)}(f,S_i(D)) \subseteq S_i(D + d) \quad \text{and} \quad
   \mu_{(i)}(f,S_i^*(D)) \subseteq S_i^*(D + d).$$
 
 We define the bilinear map $\mu := \bigotimes_{i = 1}^q \mu_{(i)}$.

\begin{remark}
  If we restrict the domain of $\mu$ to
  $
  \bigotimes_{i = 1}^q
  \left(  R[{\bm{x}_i}] \times 
  R[{\bm{x}_i}] \right) 
\simeq
\left( \bigotimes_{i = 1}^q R[{\bm{x}_i}] \right) \times \left(
    \bigotimes_{i = 1}^q R[{\bm{x}_i}] \right) $, then $\mu$ acts as
  multiplication, ie for
  $f,g \in \left( \bigotimes_{i = 1}^q R[{\bm{x}_i}] \right)$, it
  holds
   $
   \mu(f, g) = f \, g.
   $
\end{remark}

Given $f \in \left( \bigotimes_{i = 1}^q R[{\bm{x}_i}] \right)$, we
define the linear map
 \begin{align*}
   \begin{array}{r r c}
 \mu_f : & \bigotimes_{i = 1}^q (R[{\bm{x}_i}] \oplus R[\bm{\partial
   x_i}])  \rightarrow & \bigotimes_{i = 1}^q (R[{\bm{x}_i}] \oplus
 R[{\bm{\partial x}_i}]) \\
 & g  \mapsto &  \mu_f(g) = \mu(f,g).
   \end{array}
\end{align*}

\noindent
Using the isomorphisms of \Cref{intro:thm:kunnethFormula} and
\Cref{intro:thm:bottFormula}, for $\bm{d} \in \N^q$ and
$f \in \K[\bm{\bar{x}}]_{\bm{d}}$, if we restrict the map $\mu_f$ to
$H^{r}_{\P}(\bm{m})$, for any $r \in \N$, then we obtain the map
$\mu_f : H^{r}_{\P}(\bm{m}) \rightarrow H^{r}_{\P}(\bm{m} +
\bm{d}).$

\begin{definition}
  [Inner derivative {\cite{weyman_determinantal_1994}}]
  \label{def:innerDerivative}
  \!\!\!\!\!\!
  Let $E$ be a $\K$-vector space generated by \!\! $\{e_1, \dots e_N\}$. We
  define the $k$-th inner derivative, $\inDev_k$, of the exterior
  algebra of $\bigwedge E$ as the $(-1)$-graded map such that, for
  each $i$ and $1 \leq j_1 < \dots < j_i \leq N$,
  \begin{align*}
    \small
    \begin{array}{r c l}
      \inDev_k : & \bigwedge^i E & \!\!\!\!\! \rightarrow  \bigwedge^{i-1} E \\ 
                 & \!\!\!\!\!\!\!\!\!\! e_{j_1} \wedge \dots \wedge e_{j_i} & \!\!\!\!\! \mapsto \inDev_k(e_{j_1} \wedge \dots \wedge e_{j_i}) =
                                                                 \left\{ 
                                                                 \begin{array}{c l}
                                                                   (-1)^{t+1}
                                                                   e_{j_1} \wedge \dots \wedge e_{j_{t-1}} \wedge e_{j_{t+1}} \wedge \dots \wedge e_{j_i}
                                                                   \!\!\! &
                                                                     \text{if } j_t = k\\
                                                                   0 & \text{otherwise} 
                                                                 \end{array}
                                                                       \right.
    \end{array}
  \end{align*}
\end{definition}

Given $R$-modules $A_1,A_2,B_1,B_2$ and homomorphisms
$\mu_1 : A_1 \rightarrow B_1$ and $\mu_2 : A_2 \rightarrow B_2$, their
tensor product is the map
$\mu_1 \otimes \mu_2: A_1 \otimes A_2 \rightarrow B_1 \otimes B_2$
such that, for $a_1 \in A_1$ and $a_2 \in A_2$,
 $(\mu_1 \otimes \mu_2)(a_1 \otimes a_2) = \mu_1(a_1) \otimes \mu_2(a_2).$

\begin{proposition}
  [{{\cite[Prop.~2.6]{weyman_determinantal_1994}}}]
  \label{intro:def:KoszulMap}
  Consider the generic overdetermined multihomogeneous system
  $\bm{F} \in \Z[\bm{u}][\bm{\bar{x}}]^{N+1}$ with polynomials of
  multidegrees ${\bm{d}_0},\dots,{\bm{d}_N}$, respectively
  (\Cref{intro:def:genMultiHom}). Given a degree vector
  $\bm{m} \in \Z^q$, we consider the Weyman complex
  $K_\bullet(\bm{m})$. If there is $v \in \{-N+1,\dots,N+1\}$ and
  $p \in \{0,\dots,N+1\}$ such~that
  $$K_v(\bm{m}) = K_{v,p}(\bm{m}) \otimes \Z[\bm{u}] \qquad \text{and} \qquad
  K_{v-1}(\bm{m}) = K_{v-1,p-1}(\bm{m}) \otimes \Z[\bm{u}],$$ then the map
  $\delta_v(\bm{m}) : K_v(\bm{m}) \rightarrow K_{v-1}(\bm{m})$ is
  $\delta_v(\bm{m}) = \sum_{k = 0}^N \mu_{F_k} \otimes \inDev_k,$
  where $\mu_{F_k} \otimes \inDev_k$ denotes the tensor product of the
  maps $\mu_{F_k}$ and $\inDev_k$
  (\Cref{def:innerDerivative}).
\end{proposition}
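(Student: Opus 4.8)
The plan is to trace the statement back to the sheaf‑theoretic construction underlying \Cref{intro:res:thm:weymanComplex}. Recall that $K_\bullet(\bm m)$ is built from the twisted Koszul complex of sheaves on $\P$
\[
0 \to \textstyle\bigwedge^{N+1}\mathcal E^{*}\otimes\mathcal O_\P(\bm m) \to \cdots \to \mathcal E^{*}\otimes\mathcal O_\P(\bm m) \to \mathcal O_\P(\bm m) \to 0 ,
\]
where $\mathcal E=\bigoplus_{k=0}^{N}\mathcal O_\P(\bm d_k)$ and the differential is contraction $\iota_s$ with the section $s=(F_0,\dots,F_N)\in H^{0}(\P,\mathcal E)$; its $p$‑th term is $\bigwedge^{p}\mathcal E^{*}=\bigoplus_{\#I=p}\mathcal O_\P(-\sum_{k\in I}\bm d_k)\otimes\bigwedge_{k\in I}e_k$. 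Passing to (hyper)cohomology and invoking \Cref{intro:thm:kunnethFormula} and \Cref{intro:thm:bottFormula} identifies the resulting modules with the $K_{v,p}(\bm m)$ of \eqref{eq:Kvp}, and this is exactly $K_\bullet(\bm m)$; I would take this as given.

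The structural fact I would use is that the differential $\delta_v(\bm m)\colon K_v(\bm m)\to K_{v-1}(\bm m)$, restricted to a summand $K_{v,p}(\bm m)$, decomposes as a sum of maps of two types: a \emph{Koszul} (multiplication) component with image in $K_{v-1,p-1}(\bm m)$, induced by $\iota_s$ — it lowers the exterior degree by one and fixes the sheaf‑cohomological degree — and a \emph{connecting} component with image in $K_{v-1,p}(\bm m)$, induced by the connecting homomorphisms in sheaf cohomology — it raises the cohomological degree by one and fixes the exterior degree. No other components can occur, because the total differential of the double complex producing $K_\bullet(\bm m)$ changes the bidegree (exterior degree, cohomological degree) only by $(-1,0)$ or $(0,+1)$. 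Now I would invoke the hypothesis: since $K_{v-1}(\bm m)=K_{v-1,p-1}(\bm m)\otimes\Z[\bm u]$, the summand $K_{v-1,p}(\bm m)$ vanishes, so every connecting component of $\delta_v(\bm m)$ is zero and $\delta_v(\bm m)$ equals the map induced by $\iota_s$ alone.

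It then remains to identify that induced map with $\sum_{k=0}^{N}\mu_{F_k}\otimes\inDev_k$. On $\bigwedge^{p}\mathcal E^{*}$, written with basis vectors $\bigwedge_{k\in I}e_k$, the contraction $\iota_s$ is, summand by summand, $\sum_{k}(\text{multiplication by }F_k)\otimes\inDev_k$: deleting $e_k$ from $\bigwedge_{k\in I}e_k$ produces precisely the Koszul sign of \Cref{def:innerDerivative}, and multiplication by the multidegree‑$\bm d_k$ form $F_k$ sends $\mathcal O_\P(\bm m-\sum_{\ell\in I}\bm d_\ell)$ to $\mathcal O_\P(\bm m-\sum_{\ell\in I\setminus\{k\}}\bm d_\ell)$. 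Twisting by $\mathcal O_\P(\bm m)$ and taking $(p-v)$‑th cohomology, the Künneth isomorphism \eqref{eq:kunneth} turns this into the tensor product over $i\in[q]$ of the maps induced by multiplication by $F_k$ on the one‑variable cohomologies $H^{r_i}_{\Pr^{n_i}}$, and Bott's formula together with Serre duality identifies those with the maps $\mu_{(i)}$ of \eqref{eq:mapMui}: for $r_i=0$ one gets honest polynomial multiplication on $S_i(\,\cdot\,)$, while for $r_i=n_i$ one gets the transposed action on $S_i^{*}(\,\cdot\,)$, which is exactly the second branch of $\mu_{(i)}$. Tensoring the $\mu_{(i)}$ over $i$ gives $\mu_{F_k}$, and summing over $k$ and over $I$ yields $\delta_v(\bm m)=\sum_{k=0}^{N}\mu_{F_k}\otimes\inDev_k$.

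The main obstacle is the sign‑ and identification‑bookkeeping of the last step: one must fix the Künneth, Bott, and Serre‑duality isomorphisms compatibly so that no spurious scalar creeps in, verify that in top cohomological degree "multiplication by $F_k$" really specializes to the contraction‑type operator in the second branch of $\mu_{(i)}$, and match the Koszul signs to \Cref{def:innerDerivative} after passing from $\bigwedge E^{*}$ to $\bigwedge E$. By contrast, the vanishing of the connecting components is immediate once the double‑complex picture is set up, and the restriction $v\in\{-N+1,\dots,N+1\}$ merely records the range of total degrees for which a concentration $K_v=K_{v,p}$, $K_{v-1}=K_{v-1,p-1}$ is possible, so the argument covers the boundary cases $p=0$ and $p=N+1$ uniformly.
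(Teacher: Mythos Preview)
The paper does not supply its own proof of this proposition: it is quoted verbatim as \cite[Prop.~2.6]{weyman_determinantal_1994} and used as a black box, so there is nothing in the present paper to compare your argument against. Your sketch is essentially the argument behind that cited result (and behind \cite[Sec.~5.2]{weyman2003cohomology}): realize $K_\bullet(\bm m)$ as the hypercohomology spectral sequence of the twisted Koszul complex on $\P$, observe that the total differential splits into a Koszul contraction piece landing in $K_{v-1,p-1}$ and higher connecting pieces landing in $K_{v-1,p'}$ with $p'\geq p$, use the concentration hypothesis $K_{v-1}=K_{v-1,p-1}$ to kill everything but the contraction, and then unwind the K\"unneth/Bott/Serre identifications to recognize contraction by $F_k$ as $\mu_{F_k}$. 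That is the right shape, and your identification of the sign/compatibility bookkeeping as the only genuinely delicate step is accurate.

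One small refinement: your description of the non-Koszul components as a single ``connecting'' map of bidegree $(0,+1)$ is slightly too restrictive. In general the hypercohomology differential can contribute terms $K_{v,p}\to K_{v-1,p+r}$ for all $r\geq 0$ (higher differentials of the spectral sequence), not just $r=0,1$. This does not affect your argument, since the hypothesis $K_{v-1}(\bm m)=K_{v-1,p-1}(\bm m)\otimes\Z[\bm u]$ annihilates all of them at once, but the justification ``the double complex changes bidegree only by $(-1,0)$ or $(0,+1)$'' describes the page-one differentials rather than the total differential on $K_\bullet$.
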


\begin{definition}[Koszul-type determinantal formula]
  \label{def:KoszulDetFormula}
  With the notation of \Cref{intro:def:KoszulMap}, when the Weyman
  complex reduces to
  \begin{align} \label{eq:KoszulDetFormula}
    K_\bullet(\bm{m}) : 0
    \rightarrow K_{1, p+1}(\bm{m}) \otimes \Z[\bm{u}]
    \xrightarrow{\delta_1(\bm{m})}
    K_{0, p}(\bm{m}) \otimes \Z[\bm{u}] \rightarrow 0 \, ,
  \end{align}
  we say that the map $\delta_1(\bm{m})$ is a \emph{Koszul-type determinantal formula}.
\end{definition}

\begin{example}

   Consider the blocks of variables
   $\bm{x}_1 := \{ x_{1,0}, x_{1,1} \}$ and
   $\bm{x}_2 \! := \! \{x_{2,0}, x_{2,1}\}$, and the systems
   $\bm{f} := (f_0,f_1,f_2)$ of multidegrees
   $\bm{d}_0 = \bm{d}_1 = \bm{d}_2 = (1,1)$. That
   is,
\begin{align}
\begin{cases}
  f_0 = (a_{0,0} \, x_{1,0} + a_{1,0} \, x_{1,1}) \, x_{2,0} +
        (a_{0,1} \, x_{1,0}  + a_{1,1} \,  x_{1,1} )  \, x_{2,1} \\
  f_1 = (b_{0,0} \, x_{1,0} + b_{1,0} \, x_{1,1}) \, x_{2,0} +
        (b_{0,1} \, x_{1,0}  + b_{1,1} \,  x_{1,1} )  \, x_{2,1} \\
  f_2 = (c_{0,0} \, x_{1,0} + c_{1,0} \, x_{1,1}) \, x_{2,0} +
        (c_{0,1} \, x_{1,0}  + c_{1,1} \,  x_{1,1} )  \, x_{2,1}.
\end{cases}
\end{align}
As in \cite[Lem.~2.2]{emiris2016bit}, consider the degree vector
$\bm{m} = (2,-1)$. So, the Weyman complex is
  $$ K_\bullet(\bm{m}, \bm{f}) : 0 \rightarrow
  K_{1, 2}(\bm{m}, \bm{f}) \xrightarrow{\delta_1(\bm{m}, \bm{f})} K_{0, 1}(\bm{m}, \bm{f})
  \rightarrow 0,$$
where
\begin{align*}
  \left\{
  \begin{array}{l c}
    K_{1, 2}(\bm{m}, \bm{f}) = & S_1(0) \otimes S_2^*(-1) \otimes
                         \Big(
                         \{e_0 \wedge e_1\} \oplus
                         \{e_0 \wedge e_2\} \oplus
                         \{e_1 \wedge e_2\}
                         \Big) \\
    K_{0, 1}(\bm{m}, \bm{f}) = & S_1(1) \otimes S_2^*(0) \otimes
                         \Big(
                         \{e_0\} \oplus \{e_1\} \oplus \{e_2\}
                         \Big).
  \end{array}
                     \right.
\end{align*}
If we consider monomial bases for $K_{1, 2}(\bm{m})$ and
$K_{0, 1}(\bm{m})$, then we can represent $\delta_1(\bm{m})$ with the
transpose of the matrix that follows.  Note that, the element
$\partial 1 \in \K[\bm{\partial x}_1,\bm{\partial x}_1]$ corresponds
to the dual of $1 \in \K[{\bm{x}_1}, {\bm{x}_2}]$.

{\small
\begin{align*}
  \begin{array}{c | c c c c c c}
&
    x_{1,0} \otimes \partial 1 \otimes e_0  &
     &
    x_{1,0} \otimes \partial 1 \otimes e_1 &
     &
    x_{1,0} \otimes \partial 1 \otimes e_2 &
    \\
    &
     &
    \hspace*{-.75cm}   x_{1,1} \otimes \partial 1 \otimes e_0 \hspace*{-.75cm} &
     &
    \hspace*{-.75cm} x_{1,1} \otimes \partial 1 \otimes e_1 \hspace*{-.75cm} &
     &
    \hspace*{-.75cm} x_{1,1} \otimes \partial 1 \otimes e_2  \\ \hline
    1 \otimes {\partial x}_{2,0} \otimes (e_0 \wedge e_1) &
                                                            -b_{0,0} & -b_{1,0} & a_{0,0} & a_{1,0} & 0 & 0\\
    1 \otimes {\partial x}_{2,1} \otimes (e_0 \wedge e_1) &
                                                            -b_{0,1} & -b_{1,1} & a_{0,1} & a_{1,1} & 0 & 0\\
    1 \otimes {\partial x}_{2,0} \otimes (e_0 \wedge e_2) &
                                                            -c_{0,0} & -c_{1,0} & 0 & 0 & a_{0,0} & a_{1,0} \\
    1 \otimes {\partial x}_{2,1} \otimes (e_0 \wedge e_2) &
                                                            -c_{0,1} & -c_{1,1} & 0 & 0 & a_{0,1} & a_{1,1} \\
    1 \otimes {\partial x}_{2,0} \otimes (e_1 \wedge e_2) &
                                                            0 & 0 & -c_{0,0} & -c_{1,0} & b_{0,0} & b_{1,0} \\
    1 \otimes {\partial x}_{2,1} \otimes (e_1 \wedge e_2) &
                                                            0 & 0 & -c_{0,1} & -c_{1,1} & b_{0,1} & b_{1,1} \\  
  \end{array}
\end{align*}
}

The resultant is equal (up to sign) to the determinant of the above
matrix, it has total degree 6 (same as the size of this matrix) and 66
terms.
 \end{example}

  As we saw in the previous example, once we have fixed a basis for the map
  in \Cref{intro:def:KoszulMap}, we can represent the Koszul-type
  determinantal formula by the determinant of a matrix. We refer to
  this matrix as a \emph{Koszul resultant matrix}.

  \begin{corollary}
    [{{\cite[Prop.~5.2.4]{weyman2003cohomology}}}]
    \label{intro:thm:sizeVectorSpacesKoszulMap}
       Let $\bm{F}$ be a generic multihomogeneous system of polynomials
   with multidegrees ${\bm{d}_0},\dots,{\bm{d}_N}$, respectively.
   Let $\bm{m} \in \Z^q$ be a degree vector so that the  Weyman complex $K_\bullet(\bm{m})$
   becomes
   \begin{align*}
     K_\bullet(\bm{m}) : 0 \rightarrow K_{v,
     p+v}(\bm{m}) \otimes \Z[\bm{u}] \xrightarrow{\delta_v(\bm{m})} K_{v-1,
     p+v-1}(\bm{m}) \otimes \Z[\bm{u}] \rightarrow 0 \, .
   \end{align*}
   Then, the map $\delta_v(\bm{m})$ of \Cref{intro:def:KoszulMap}
   is linear in the coefficients of $\bm{F}$.
   Moreover, as the determinant of the
   complex is the resultant, the rank of both $K_{v}(\bm{m})$ and
   $K_{v+1}(\bm{m})$, as $\Z[\bm{u}]$-modules, equals the degree of
   the resultant (\Cref{intro:thm:degRes}).
 \end{corollary}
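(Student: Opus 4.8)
The plan is to reduce everything to the explicit matrix description of the differential supplied by \Cref{intro:def:KoszulMap}, combined with the identification of the determinant of the Weyman complex with the resultant (\Cref{intro:res:thm:weymanComplex}). First I would check that the hypothesis puts us exactly in the setting of \Cref{intro:def:KoszulMap}: the ambient complex $K_\bullet(\bm{m})$ vanishes outside homological degrees $\{-N,\dots,N+1\}$, so the two surviving terms $K_{v,p+v}(\bm{m})\otimes\Z[\bm{u}]$ and $K_{v-1,p+v-1}(\bm{m})\otimes\Z[\bm{u}]$ force $v\in\{-N+1,\dots,N+1\}$; hence \Cref{intro:def:KoszulMap} applies (its parameter, also called $p$ there, being $p+v$ in our notation) and gives $\delta_v(\bm{m})=\sum_{k=0}^{N}\mu_{F_k}\otimes\inDev_k$.

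Next I would establish linearity by inspecting this formula entrywise in monomial bases. After the K\"unneth and Bott identifications (\Cref{intro:thm:kunnethFormula}, \Cref{intro:thm:bottFormula}), $\mu_{F_k}$ is the map $\mu_f$ with $f=F_k=\sum_{\bm\alpha}u_{k,\bm\alpha}\,\bm{x}^{\bm\alpha}$; on each $S_i$- or $S_i^*$-factor it sends a monomial to a $\Z[\bm{u}]$-combination of monomials whose coefficients are among the variables $u_{k,\bm\alpha}$, so every entry of $\mu_{F_k}$ is $\pm u_{k,\bm\alpha}$ or $0$, homogeneous of degree $1$ in the block $\bm{u}_k$. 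The inner derivative $\inDev_k$ (\Cref{def:innerDerivative}) has entries in $\{0,\pm1\}$ and does not involve $\bm{u}$, and since it sends $\bigwedge_{j\in I}e_j$ to $\pm\bigwedge_{j\in I\setminus\{k\}}e_j$, the summands of $\delta_v(\bm{m})$ indexed by distinct $k$ occupy disjoint blocks of the matrix. Therefore every entry of the matrix $M$ of $\delta_v(\bm{m})$ is, up to sign, a single variable $u_{k,\bm\alpha}$ or $0$; in particular $M$ is linear in the coefficients of $\bm{F}$, and each of its entries is a homogeneous linear form in $\bm{u}$.

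Finally I would read off the rank statement from the determinant of the complex. A two-term complex $0\to A\xrightarrow{\delta}B\to0$ of free $\Z[\bm{u}]$-modules admits a determinant precisely when $\operatorname{rank}A=\operatorname{rank}B$, and then that determinant is $\det(\delta)$; by \Cref{intro:res:thm:weymanComplex} it equals $\res(F_0,\dots,F_N)$, which is a non-zero polynomial, so $\operatorname{rank}K_v(\bm{m})=\operatorname{rank}K_{v-1}(\bm{m})=:r$ and $M$ is a square matrix of size $r$ with $\det(M)=\pm\,\res(F_0,\dots,F_N)$. Since all entries of $M$ are homogeneous of degree $1$, its Leibniz expansion shows $\det(M)$ is $0$ or homogeneous of total degree $r$; being $\pm\,\res\neq0$, it is homogeneous of degree $r$, whence $\text{degree}(\res)=r=\operatorname{rank}K_v(\bm{m})=\operatorname{rank}K_{v-1}(\bm{m})$, which is the claim (a finer, block-by-block bookkeeping in the same expansion also recovers $\text{degree}(\res,\bm{u}_k)$, matching \Cref{intro:thm:degRes}). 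The step I expect to need the most care is the appeal to the formalism of determinants of generically exact complexes (\cite[Ap.~A]{gelfand2008discriminants}): one must be sure that for a two-term complex the ``determinant of the complex'' is genuinely the ordinary determinant of the connecting map, and that its equality with the non-vanishing resultant is exactly what forces the two free modules to be of equal rank; granting that, the remainder is routine manipulation of the explicit matrix whose entries are $0$ or $\pm u_{k,\bm\alpha}$.
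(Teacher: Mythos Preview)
Your argument is correct. Note that the paper does not supply its own proof of this corollary: it is stated with a direct citation to \cite[Prop.~5.2.4]{weyman2003cohomology} and no argument is given in the body or the appendix. Your write-up is therefore not a comparison target but a self-contained verification built from the surrounding statements (\Cref{intro:res:thm:weymanComplex} and \Cref{intro:def:KoszulMap}), which is exactly the right thing to do.

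Two small comments. First, your observation that the summands $\mu_{F_k}\otimes\inDev_k$ occupy disjoint blocks is correct and worth stating crisply: for fixed $I$ in the source and $J$ in the target with $\#I=\#J+1$, the only $k$ contributing to the $(I,J)$-block is $k=I\setminus J$; this is what guarantees each matrix entry is a single $\pm u_{k,\bm\alpha}$ rather than a sum of several. Second, the corollary as printed has a typographical slip (it says $K_{v+1}(\bm m)$ where the complex only involves $K_v$ and $K_{v-1}$); you have silently corrected this, which is fine.
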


 We remark that Koszul-type formulas generalizes Sylvester-type
 formulas.
 
 \begin{proposition} \label{intro:thm:sylvesterMatrix}
   Under the assumptions of \Cref{intro:def:KoszulMap}, if $p = 1$ and
   $v = 0$, the map $\delta_v(\bm{m})$ acts as a Sylvester map, that
   is $(g_0,\dots,g_N) \mapsto \sum_{k = 0}^N g_k \, F_k$. In this
   case, it holds
   $$\delta_v(\bm{m})( g_0 \otimes e_0 + \dots + g_N \otimes e_N)
   =  \Big( \sum_{k = 0}^N g_k \, F_k \Big) \otimes 1.$$
 \end{proposition}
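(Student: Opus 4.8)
\medskip
\noindent\textbf{Proof plan.}
This is a direct unwinding of the definitions. The plan is as follows. First I would invoke \Cref{intro:def:KoszulMap}: under the hypothesis the only nonzero differential is $\delta_v(\bm{m})=\sum_{k=0}^{N}\mu_{F_k}\otimes\inDev_k$, and I would make the two modules it connects explicit. By \eqref{eq:Kvp}, the source $K_v(\bm{m})$ is the direct sum over the singletons $I=\{k\}$, $k\in\{0,\dots,N\}$, of the pieces $H^{0}_{\P}(\bm{m}-\bm{d}_k)\otimes e_k$ (tensored with $\Z[\bm{u}]$), while the target $K_{v-1}(\bm{m})$ is the single summand indexed by $I=\emptyset$, namely $H^{0}_{\P}(\bm{m})\otimes 1$ (tensored with $\Z[\bm{u}]$), where $1$ is the unit of the exterior algebra, a generator of its degree-$0$ part $\cong\K$. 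Through \Cref{intro:thm:kunnethFormula} and \Cref{intro:thm:bottFormula}, these degree-$0$ cohomologies are identified with the graded components $\K[\bm{\bar{x}}]_{\bm{m}-\bm{d}_k}$ and $\K[\bm{\bar{x}}]_{\bm{m}}$ of the polynomial algebra.

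Next I would evaluate $\delta_v(\bm{m})$ on a general element $g_0\otimes e_0+\dots+g_N\otimes e_N$ of the source, one tensor factor at a time. For the exterior factor, \Cref{def:innerDerivative} gives that $\inDev_k(e_j)$ is the unit $1$ (with sign $(-1)^{1+1}=1$) when $j=k$ and $0$ otherwise, so
\[
(\mu_{F_k}\otimes\inDev_k)\bigl(g_0\otimes e_0+\dots+g_N\otimes e_N\bigr)=\sum_{j=0}^{N}\mu_{F_k}(g_j)\otimes\inDev_k(e_j)=\mu_{F_k}(g_k)\otimes 1 .
\]
For the polynomial factor, the remark following the definition of $\mu=\bigotimes_i\mu_{(i)}$ in \eqref{eq:mapMui} says that $\mu$ restricted to products of the rings $\K[\bm{x_i}]$ is ordinary multiplication, and the gradedness of each $\mu_{(i)}$ ensures that $F_k\in\K[\bm{\bar{x}}]_{\bm{d}_k}$ and $g_k\in\K[\bm{\bar{x}}]_{\bm{m}-\bm{d}_k}$ multiply to an element of $\K[\bm{\bar{x}}]_{\bm{m}}$, matching the target; hence $\mu_{F_k}(g_k)=F_k\,g_k$. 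Summing over $k$ I would conclude
\[
\delta_v(\bm{m})\bigl(g_0\otimes e_0+\dots+g_N\otimes e_N\bigr)=\sum_{k=0}^{N}F_k\,g_k\otimes 1=\Bigl(\sum_{k=0}^{N}g_k\,F_k\Bigr)\otimes 1 ,
\]
which is exactly the claimed Sylvester map.

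Both displayed computations are pure bookkeeping, so there is no deep obstacle here. The one step that needs care — and what I would single out as the main obstacle — is the first: confirming from \eqref{eq:Kvp} that the hypothesis indeed places both the source and the target in sheaf cohomological degree $0$, so that no dual polynomial ring $\K[\bm{\partial x_i}]$ intervenes and $\mu_{F_k}$ genuinely acts as multiplication by $F_k$ (and not as a contraction), and verifying that the K\"unneth and Bott identifications of \Cref{intro:thm:kunnethFormula} and \Cref{intro:thm:bottFormula} are used in their canonical, sign-free form. Granting this, the statement follows from the two identities above by linearity.
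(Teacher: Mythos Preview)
Your proposal is correct: this proposition is a direct unwinding of the definitions, and the paper states it without proof, treating it as an immediate consequence of \Cref{intro:def:KoszulMap} and the description of $\mu$ following \eqref{eq:mapMui}. Your computation of $\inDev_k(e_j)$ from \Cref{def:innerDerivative} and your use of the remark that $\mu$ restricts to ordinary multiplication on $\bigotimes_i R[\bm{x_i}]$ are exactly the two ingredients the statement rests on.

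One small observation: with the values $p=1$ and $v=0$ as written in the statement, \eqref{eq:Kvp} would give cohomological degree $p-v=1$ for the source, not $0$. Your write-up implicitly works with $v=1$ (so that $K_v=K_{1,1}$ sits in $H^0$ and $K_{v-1}=K_{0,0}$ sits in $H^0$), which is the only reading under which the conclusion makes sense and which matches the Sylvester case of \Cref{def:KoszulDetFormula} and the use in \Cref{multilinearMixed:thm:sylvType}. You might want to flag this discrepancy explicitly rather than silently correct it; otherwise the ``main obstacle'' you mention---checking that both terms land in $H^0$ so that no dual variables $\bm{\partial x_i}$ appear---does not literally go through from the hypotheses as stated.
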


Determinantal formulas for the multiprojective resultant of unmixed
systems, that is systems where the multidegree of each polynomial is
the same, were extensively studied by several authors
\cite{sturmfels1994multigraded,weyman_determinantal_1994,chtcherba_constructing_2004,dickenstein2003multihomogeneous}.
However, there are very few results about determinantal formulas for
\textit{mixed} multihomogeneous systems, that is, when the supports
are not the same. We know such formulas for scaled multihomogeneous
systems \cite{EmiMan-mhomo-jsc-12}, that is when the supports
are scaled copies of one of them, and for bivariate tensor-product
polynomial systems
\cite{mantzaflaris2017resultants,buse_matrix_2020}.
In what follows, we use the Weyman complex to derive new formulas for
families of mixed multilinear systems.

\section{Determinantal formulas for star multilinear systems}
\label{multilinearMixed:sec:mixedUmixed}

We consider four different kinds of overdetermined multihomogeneous
systems, related to \emph{star multilinear systems}
(\Cref{def:multilinearMixed}) and we construct
determinantal formulas for each of them. These formulas are Koszul-
and Sylvester-type determinantal formulas
(\Cref{def:KoszulDetFormula}).
To simplify the presentation, we change
somewhat the notation that we used for the polynomial systems in
\cref{sec:multihomSys}.
We split the blocks of variables in two groups; we replace the blocks
of variables $\bm{x}_k$ by ${\bm{X}_i}$ or ${\bm{Y}_j}$ and the
constants $n_k$, that correspond to the cardinalities of the blocks,
by $\alpha_i$ or $\beta_j$.
Let $A, B \in \N$ and $q = A + B$.
Let $\bm{\bar{X}}$ be the set of $A$ blocks of variables
$\{\bm{X}_1, \dots, \bm{X}_A\}$.  For each $i \in [A]$,
${\bm{X}_{i}} := \{x_{i,0},\dots,x_{i,\alpha_i}\}$; so the number of
affine variables in each block $\bm{X}_i$ is $\alpha_i \in \NN$.  We also consider the
polynomial algebra
$\K[{\bm{X}_i}] = \bigoplus_{d \in \Z} S_{\!\bm{X}_i}(d)$, where
$S_{\!\bm{X}_i}(d)$ is the $\K$-vector space of polynomials of degree
$d$ in $\K[{\bm{X}_i}]$.
Similarly, $\bm{\bar{Y}}$ is the set of $B$ blocks of variables
$\{\bm{Y}_1, \dots, \bm{Y}_B\}$.
For each $j \in [B]$,
${\bm{Y}_{j}} := \{y_{j,0},\dots, y_{j,\beta_j}\}$; hence the number
of variables in each block ${\bm{Y}_{j}}$ is $\beta_j \in \NN$.
Moreover, $\K[{\bm{Y}_j}] = \bigoplus_{d \in \Z} S_{\bm{Y}_j}(d)$,
where $S_{\bm{Y}_j}(d)$ is the $\K$-vector space of polynomials of
degree $d$ in $\K[{\bm{Y}_j}]$.

Consider the $\Z^{q}$-multigraded algebra $\K[\bm{\bar{X},\bar{Y}}]$,
given by
$$
\K[\bm{\bar{X},\bar{Y}}] :=
\!\!\!\!
\!\!\!\!
\bigoplus_{
  (d_{\bm{X}_1},\dots,d_{\bm{X}_A},d_{\bm{Y}_1},\dots,d_{\bm{Y}_B}) \in \Z^{q}
}
\!\!\!\!
\!\!\!\!
S_{\bm{X}_1}(d_{\bm{X}_1}) \otimes \dots \otimes S_{\bm{X}_A}(d_{\bm{X}_A}) \otimes
S_{\bm{Y}_1}(d_{\bm{Y}_1}) \otimes \dots \otimes S_{\bm{Y}_B}(d_{\bm{Y}_B}).
$$
For a multihomogeneous polynomial $f \in \K[\bm{\bar{X},\bar{Y}}]$ of
multidegree $\bm{d} \in \Z^q$, we denote by $d_{\bm{X}_i}$,
respectively $d_{\bm{Y}_j}$, the degree of $f$ \wrt the block of
variables ${\bm{X}_i}$, respectively ${\bm{Y}_j}$.
For each group of indices $1 \leq i_1 < \dots < i_r \leq A$ and
$1 \leq j_1 < \dots < j_s \leq B$, we denote by
$\K[{\bm{X}_{i_1}},\dots,{\bm{X}_{i_r}},{\bm{Y}_{j_1}},\dots,{\bm{Y}_{j_s}}]_{\bm{1}}$
the set of multilinear polynomials in $\K[\bm{\bar{X},\bar{Y}}]$
with multidegree $(d_{\bm{X}_1},\dots,d_{\bm{X}_A},d_{\bm{Y}_1},\dots,d_{\bm{Y}_B})$, where
\begin{align*}
d_{\bm{X}_{l}} = \left\{
  \begin{array}{c l}
    1 & \text{if } l \in \{i_1,\dots,i_r\} \\
    0 & \text{otherwise}
  \end{array} \right.
  \quad \text{ and } & \quad
  d_{\bm{Y}_{l}} = \left\{
  \begin{array}{c l}
    1 & \text{if } l \in \{j_1,\dots,j_s\} \\
    0 & \text{otherwise} .
  \end{array} \right. .
\end{align*}

Let $N = \sum_{i=1}^A \alpha_i + \sum_{j=1}^B \beta_j$. We say that a
polynomial system is \emph{square} if it has $N$ equations and
\emph{overdetermined} if it has $N+1$. We consider the multiprojective space
$$\P := \Pr^{\alpha_1} \times \dots \times \Pr^{\alpha_A} \times
\Pr^{\beta_1} \times \dots \times \Pr^{\beta_B} .$$

\begin{definition}[Star multilinear systems]
  \label{def:multilinearMixed}
    A square multihomogeneous system $\bm{f} = (f_1,\dots,f_N)$ in
  $\K[\bm{\bar{X},\bar{Y}}]$ with multidegrees
  ${\bm{d}_1},\dots,{\bm{d}_N} \in \Z^{q}$, respectively, is a
  \emph{Star} \emph{multilinear system} if for every $k \in [N]$, there is
  $j_k \in [B]$ such that
  $$f_k \in \K[{\bm{X}_1},\dots,{\bm{X}_A},{\bm{Y}_{j_k}}]_{\bm{1}}.$$
  For each $j \in [B]$, we denote by $\eqs_j$ the number of
  polynomials of $\bm{f}$ in
  $\K[{\bm{X}_1},\dots,{\bm{X}_A},{\bm{Y}_j}]_{\bm{1}}.$
\end{definition}

We use the term \emph{star} because we can represent such systems
using a star graph with weighted edges.  The vertices of the
graph are the algebras $\K[{\bm{Y}_{1}}],\dots,\K[{\bm{Y}_{B}}]$, and
 $\K[{\bm{X}_{1}},\dots,{\bm{X}_{A}}]$. For each ${\bm{d}_k}$ there is
an edge between the vertices $\K[{\bm{X}_{1}},\dots,{\bm{X}_{A}}]$ and
$\K[{\bm{Y}_{j}}]$ whenever $d_{k,Y_j} = 1$.
The weight of the edge between the vertices
$\K[{\bm{X}_{1}},\dots,{\bm{X}_{A}}]$ and $\K[{\bm{Y}_{j}}]$
corresponds to $\eqs_j$.
That is, when it holds
$f_k \in \K[{\bm{X}_{1}},\dots,{\bm{X}_{A}}, {\bm{Y}_{j}}]_{\bm{1}}$.
The graph is a star because every vertex is connected to
$\K[{\bm{X}_{1}},\dots,{\bm{X}_{A}}]$ and there is no edge between two
vertices $\K[{\bm{Y}_{j_1}}]$ and $\K[{\bm{Y}_{j_2}}]$.

\begin{example}
  \label{ex:multilinearMixed}  
  Let ${\bm{X}_1},{\bm{X}_2},{\bm{Y}_1},{\bm{Y}_2},{\bm{Y}_3}$ be
  blocks of variables. Consider the multihomogeneous system
  $(f_1,f_2,f_3,f_4) \subset \K[\bm{\bar{X},\bar{Y}}]$ with
  the following (pattern of) multidegrees
  \setlength{\columnseprule}{.5pt}
  \def\columnseprulecolor{\color{black}}
  \begin{multicols}{2}
    {
    \small
  \begin{align*}
    \begin{array}{r c c | c c c c}
      (\!\!\!\! & d_{k,X_1}, & d_{k,X_2}, & d_{k,Y_1}, & d_{k,Y_2}, & d_{k,Y_3} & \!\!\!\!)\\
    {\bm{d}_1} = ( \!\!\!\!& 1, & 1,  & 1, & 0, & 0 & \!\!\!\!) \\
    {\bm{d}_2} = ( \!\!\!\!& 1, & 1,  & 1, & 0, & 0 & \!\!\!\!) \\
    {\bm{d}_3} = ( \!\!\!\!& 1, & 1,  & 0, & 1, & 0 & \!\!\!\!) \\
    {\bm{d}_4} = ( \!\!\!\!& 1, & 1,  & 0, & 0, & 1 & \!\!\!\!)
  \end{array}
  \end{align*}
  }
  \columnbreak
    \begin{center}
\begin{tikzpicture}[thick,scale=0.85, every node/.style={scale=0.85},
  fsnode/.style={draw,circle},
  ssnode/.style={draw,circle},
  every fit/.style={ellipse,draw,inner sep=-2pt,text width=2cm},
  -,shorten >= 3pt,shorten <= 3pt
]

\begin{scope}
  \node[fsnode] (x1) [label=left: ${\K[{\bm{X}_{1}},{\bm{X}_{2}}]}$] {};
\end{scope}

\begin{scope}[xshift=4cm,yshift=.75cm,start chain=going below,node distance=5mm]
\foreach \j in {1,2,3}
  \node[ssnode,on chain] (y\j) [label=right: ${\K[\bm{Y}_{\j}]}$] {};
\end{scope}


\draw (x1) -- (y1) node [near end, above,sloped] {$\eqs_{1} = 2$};
\draw (x1) -- (y2) node [near end, above,sloped] {$\eqs_{2} = 1$};
\draw (x1) -- (y3) node [near end, above,sloped] {$\eqs_{3} = 1$};
\end{tikzpicture}
\end{center}
\end{multicols}
\noindent
It is a star multilinear system
  where $\eqs_1 = 2$, $\eqs_2 = 1$, and $\eqs_3 = 1$.
    The corresponding star graph is the one above.

\end{example}

\begin{remark} \label{intro:thm:sumOfVariablesStar}
  For each square star multilinear system, it holds
  $N = \sum_{j=1}^B \eqs_{j}$.
  Moreover, if the system has a nonzero finite number of solutions,
  then for each $j \in \{1,\dots,B\}$ it holds
  $\eqs_{j} \geq \beta_j$, see \Cref{intro:thm:bezoutBound}.
\end{remark}

\subsection{Determinantal formulas}
\label{sec:det-formulas}

 To be able to define the resultant, we study overdetermined
  polynomial systems $(f_0,f_1,\dots,f_N)$ in
  $\K[\bm{\bar{X},\bar{Y}}]$ where $(f_1,\dots,f_N)$ is a square star
  multilinear system and $f_0$ is a multilinear polynomial.
The obvious choice for $f_0$ is to have the same structure as one of
the polynomials $f_1,\dots,f_N$; still we also choose $f_0$ to have a
different support. This leads to resultants of smaller degrees and so
to matrices of smaller size.
Aiming at resultant formulas for \emph{any} $A$ and $B$,
we were able to identify the following 
choices of $f_0$, that lead to a determinantal Weyman
complex\footnote{
    For other choices of $f_0$ we found specific
    values of $A$ and $B$ for which every possible Weyman complex is
    not determinantal.
  }.
In particular, the  following $f_0$ lead to determinantal
formulas:  
\smallskip
\begin{itemize}
\item {\bf Center-Vertex case:} $f_0 \in \K[\bm{X}_1,\dots,\bm{X}_A]_{\bm{1}}$.
\item {\bf Outer-Vertex case:} $f_0 \in \K[\bm{Y}_j]_{1}$, for any  $j \in [B]$.
\item {\bf Edge case:} $f_0 \in \K[\bm{X}_1,\dots,\bm{X}_A,\bm{Y}_j]_{\bm{1}}$,
  for any $j \in [B]$,
\item {\bf Triangle case:}
$f_0 \in \K[\bm{X}_1,\dots,\bm{X}_A,\bm{Y}_{j_1},\bm{Y}_{j_2}]_{\bm{1}}$,
  for any  $j_1,j_2 \in [B]$,  $j_1 \neq j_2$.
\end{itemize}

\smallskip

We can view the various multidegrees of $f_0$,
${\bm{d}_0} = (d_{0,{X}_1},\dots,d_{0,{X}_A}, d_{0,{Y_1}}, \dots,
d_{0,{Y_B}})$, in the cases above 
as solutions of the following system of inequalities:
\begin{align} \label{multilinearMixed:eq:casesForD0inMixedUnmixed}
    \begin{cases}
    (\forall\, 1 \leq i \leq A) \, & 0 \leq d_{0,\bm{X}_i} \leq 1, \\
    (\forall\, 1 \leq j \leq B) \, & 0 \leq d_{0,\bm{Y}_j} \leq 1, \\
    (\forall\, 1 \leq i_1 < i_2 \leq A) & d_{0,\bm{X}_{i_1}} =   d_{0,\bm{X}_{i_2}} \text{, and } \\
    & \sum_{j = 1}^B d_{0,\bm{Y}_j} \leq 1 + d_{0,\bm{X}_1}.
   \end{cases}
\end{align}

 Consider the set $\{0, \dots, N\}$ that corresponds to generic
 polynomials $\bm{F}=(F_0, \dots, F_N)$
 (\Cref{intro:def:genMultiHom}). As many of the polynomials have the
 same support, we can gather them to simplify the cohomologies of
 \eqref{eq:Kvp}. We need the  following notation.
 For each tuple $(s_0,\dots,s_B) \in \N^{B+1}$, let
 $\mathcal{I}_{s_0,s_1,\dots,s_B}$ be the set of all the subsets of
 $\{0,\dots,N\}$, such that
 \begin{itemize}
 \item For $1 \leq j \leq B$, the index $s_j$ indicates that we
   consider exactly $s_j$ polynomials from $(F_1,\dots,F_N)$ that belong to
   $\Z[\bm{u}][\bm{X}_1,\dots,\bm{X}_A,\bm{Y}_j]_{\bm{1}}$.
   
 \item In addition, if $s_0 = 1$, then 0 belongs to all the sets in
   $\mathcal{I}_{s_0,s_1,\dots,s_B}$.
 \end{itemize}
 That is,
 \begin{align}
   \label{multilinearMixed:eq:definitionSetOfIndices}
   \nonumber
   \mathcal{I}_{s_0,s_1,\dots,s_B} := \Big\{ I
     : \; & I \subset \{0,\dots,N\}, \left( 0 \in I \!\Leftrightarrow\!
       s_0 = 1 \right) \text{ and } \\ & (\forall 1 \leq j \leq B) \;
     s_j = \#\{k \in I \setminus \{0\}: F_k \in
     \Z[\bm{u}][\bm{X}_1,\dots,\bm{X}_A,\bm{Y}_j]_{\bm{1}}\} \Big\}. 
   \end{align}

   \begin{example}
     If we consider a system $(F_1,\dots,F_4)$ as in 
     \Cref{ex:multilinearMixed} and introduce some $F_0$, it holds for $\bm{F} = (F_0,\dots,F_4)$ that
     $\mathcal{I}_{1,1,1,0} = \{\{0,1,3\}, \{0,2,3\}\}$ and
     $\mathcal{I}_{0,2,0,1} = \{\{1,2,4\}\}$.
   \end{example}
   
   Notice that if $I, J \in \mathcal{I}_{s_0,s_1,\dots,s_B}$, then $I$
   and $J$ have the same cardinality and
   $\sum_{k \in I} {\bm{d}_k} = \sum_{k \in J} {\bm{d}_k}$, as they
   correspond to subsets of polynomials of $\bm{F}$ with the same
   multidegrees.

   The following lemma uses the sets
   $\mathcal{I}_{s_0,s_1,\dots,s_B}$ to simplify the cohomologies of
   \eqref{eq:Kvp}.

 \begin{lemma} \label{multilinearMixed:thm:rewriteKvp}
   Consider a generic overdetermined system $\bm{F} \!  = \! (F_0,\dots,F_N)$
   in $\Z[\bm{u}][\bm{\bar{X},\bar{Y}}]$ of multidegrees
   ${\bm{d}_0},\dots,{\bm{d}_N}$ (\Cref{intro:def:genMultiHom}), where
   $(F_1,\dots,F_n)$ is a square star multilinear system such that,
   for every $j \in \{1,\dots,B\}$, $\eqs_j \geq \beta_j$, and
   ${\bm{d}_0}$ is the multidegree of $F_0$.
   Following \eqref{eq:Kvp}, we can rewrite the
   modules of the Weyman complex 
   $
   K_v(\bm{m}) = \bigoplus_{p=0}^{N+1} K_{v,p} \otimes \Z[\bm{u}]
   $  
   in the more detailed form
   \begin{align}
     \label{multilinearMixed:eq:rewriteKvp}
     K_{v,p}(\bm{m}) \cong
     \!\!\!\!\!\!\!\!\!
     \bigoplus_{\substack{0 \leq s_0 \leq 1 \\ 0 \leq s_1 \leq \eqs_1 \\ \dots \\ 0 \leq s_B \leq \eqs_B \\ s_0 + s_1 + \dots + s_B = p}}
     \!\!\!\!\!\!\!\!\!\!\!\!
 H^{p-v}_{\P} \Big(
     \bm{m} - 
     ( \sum_{j=1}^{B} s_j,\dots,\sum_{j=1}^{B} s_j, s_1, \dots, s_B ) - s_0 \, {\bm{d}_{0}}
     \Big)
     \otimes
   \!\!\!
     \bigoplus_{I \in \mathcal{I}_{s_0,s_1,\dots,s_B}}  \bigwedge_{k \in I} e_k .
     \nonumber
     \\[-30pt]
     \\\nonumber
   \end{align}
   Moreover, the following isomorphisms hold for the cohomologies:
   \begin{multline}
     \label{multilinearMixed:eq:rewriteHpvGeneral}
       H^{p-v}_{\P} \Big(
     \bm{m} - 
     ( \sum_{j=1}^{B} s_j,\dots,\sum_{j=1}^{B} s_j, s_1, \dots, s_B) - s_0 \, {\bm{d}_{0}}
     \Big) \cong \\      
     \bigoplus_{\substack{
         \\ \\
       r_{{\bm{X}_1}} \dots r_{{\bm{X}_1}},r_{\bm{Y}_1} \dots r_{\bm{Y}_B} \in \N \\
       \sum_i r_{{\bm{X}_i}}  + \sum_j r_{\bm{Y}_j} = p-v
     }} \!\!\!
     \Big(
         \bigotimes_{i = 1}^A
       H^{r_{{\bm{X}_i}}}_{\Pr^{\alpha_{i}}}\Big(m_{\bm{X}_i} - \sum_{j=1}^{B} s_j - s_0 \, d_{0,\bm{X}_i}\Big)
         \displaystyle \otimes
         \bigotimes_{j = 1}^B  H^{r_{\bm{Y}_j}}_{\Pr^{\beta_j}}(m_{{\bm{Y}_j}} - s_j - s_0 \, d_{0,{\bm{Y}_j}})
       \Big) .
     \end{multline}
 \end{lemma}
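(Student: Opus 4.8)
The statement is essentially bookkeeping: it reorganizes the modules $K_{v,p}(\bm m)$ of the Weyman complex (\Cref{intro:res:thm:weymanComplex}) by grouping the index sets $I \subset \{0,\dots,N\}$ according to how many of the chosen polynomials come from each block, and then it applies K\"unneth (\Cref{intro:thm:kunnethFormula}) fiberwise. So the plan is in two stages, matching the two displayed isomorphisms.

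\emph{Stage 1: the decomposition \eqref{multilinearMixed:eq:rewriteKvp}.} Start from \eqref{eq:Kvp}, which writes $K_{v,p}(\bm m)$ as a direct sum over all $I\subset\{0,\dots,N\}$ with $\#I=p$ of $H^{p-v}_{\P}(\bm m - \sum_{k\in I}\bm d_k)\otimes\bigwedge_{k\in I}e_k$. The key observation, already noted in the paragraph preceding the lemma, is that the multidegree $\sum_{k\in I}\bm d_k$ depends on $I$ only through the tuple $(s_0,s_1,\dots,s_B)$, where $s_0 = [0\in I]$ and $s_j = \#\{k\in I\setminus\{0\}: F_k\in\Z[\bm u][\bm X_1,\dots,\bm X_A,\bm Y_j]_{\bm 1}\}$: indeed each polynomial $F_k$ ($k\ge 1$) counted by $s_j$ contributes $\bm d_k = (1,\dots,1,0,\dots,1,\dots,0)$ with the single $Y$-block degree in position $j$, so their total is $(\sum_j s_j,\dots,\sum_j s_j,s_1,\dots,s_B)$, and $F_0$ contributes $s_0\,\bm d_0$. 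Thus I partition the index set of the sum in \eqref{eq:Kvp} by the value of $(s_0,\dots,s_B)$; the set of $I$ realizing a given tuple is exactly $\mathcal I_{s_0,\dots,s_B}$ from \eqref{multilinearMixed:eq:definitionSetOfIndices}. Since the cohomology factor is constant on each such block, it pulls out of the inner sum, giving the $\bigoplus_{I\in\mathcal I_{s_0,\dots,s_B}}\bigwedge_{k\in I}e_k$ term. The ranges $0\le s_0\le 1$ and $0\le s_j\le\eqs_j$ are forced (there is one polynomial $F_0$, and only $\eqs_j$ polynomials of block type $j$ among $F_1,\dots,F_N$), and $\#I=p$ translates to $s_0+\dots+s_B=p$. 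This yields \eqref{multilinearMixed:eq:rewriteKvp}. (The hypothesis $\eqs_j\ge\beta_j$ is not actually needed for this purely combinatorial rewriting; it is carried along because it is the standing assumption under which the later arguments about which cohomologies vanish will be run, cf.\ \Cref{intro:thm:sumOfVariablesStar}.)

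\emph{Stage 2: the K\"unneth expansion \eqref{multilinearMixed:eq:rewriteHpvGeneral}.} This is a direct application of \Cref{intro:thm:kunnethFormula} to the multiprojective space $\P = \Pr^{\alpha_1}\times\dots\times\Pr^{\alpha_A}\times\Pr^{\beta_1}\times\dots\times\Pr^{\beta_B}$ with the line bundle of multidegree $\bm m - (\sum_j s_j,\dots,\sum_j s_j,s_1,\dots,s_B) - s_0\bm d_0$. Reading off the $i$-th $X$-coordinate of this multidegree gives $m_{\bm X_i} - \sum_j s_j - s_0\,d_{0,\bm X_i}$, and the $j$-th $Y$-coordinate gives $m_{\bm Y_j} - s_j - s_0\,d_{0,\bm Y_j}$; substituting these into \eqref{eq:kunneth} (with the $X$-factors indexed by $r_{\bm X_i}$ and the $Y$-factors by $r_{\bm Y_j}$, summed over $\sum_i r_{\bm X_i} + \sum_j r_{\bm Y_j} = p-v$) is exactly \eqref{multilinearMixed:eq:rewriteHpvGeneral}. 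One may additionally remark, invoking \Cref{intro:thm:bottFormula}, that in each summand at most one choice of the tuple $(r_{\bm X_1},\dots,r_{\bm Y_B})$ gives a nonzero contribution, but that sharpening is not required for the statement as written.

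\emph{Main obstacle.} There is no real analytic difficulty; the only thing to be careful about is the combinatorial accounting in Stage 1 — verifying that the map $I\mapsto(s_0,\dots,s_B)$ has fibers exactly $\mathcal I_{s_0,\dots,s_B}$, that $\sum_{k\in I}\bm d_k$ is genuinely a function of the tuple alone (which uses crucially that the $X$-blocks all appear with degree $1$ in every $f_k$, $k\ge1$, so the $X$-part of the sum is $\sum_j s_j$ uniformly), and that the index ranges and the constraint $s_0+\dots+s_B=p$ are correctly transcribed. Keeping the sign/ordering conventions on the wedge generators $\bigwedge_{k\in I}e_k$ straight (they are untouched by the regrouping, since we only collect terms, never rewrite a single $\bigwedge_{k\in I}e_k$) is the one place where a careless reader might worry, but nothing actually changes there.
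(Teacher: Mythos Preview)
Your proposal is correct and follows essentially the same approach as the paper: partition the subsets $I$ of size $p$ according to the tuple $(s_0,s_1,\dots,s_B)$, observe that $\sum_{k\in I}\bm d_k$ depends only on this tuple so the cohomology factor pulls out of the inner sum over $\mathcal I_{s_0,\dots,s_B}$, and then apply K\"unneth (\Cref{intro:thm:kunnethFormula}) to obtain \eqref{multilinearMixed:eq:rewriteHpvGeneral}. Your side remark that the hypothesis $\eqs_j\ge\beta_j$ is not actually used in this purely combinatorial rewriting is also accurate.
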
 
 
 \begin{proof}
   Consider $I,J \subset \mathcal{I}_{s_0,s_1,\dots,s_B}$. Then,
   by definition, $\#I = \#J$ and
   $\sum\limits_{k \in I} {\bm{d}_k} = \sum\limits_{k \in J} {\bm{d}_k} =
   $ \linebreak $
   (\sum_{j=1}^{B} s_j,\dots,\sum_{j=1}^{B} s_j, s_1, \dots, s_B) +
   s_0 \, {\bm{d}_{0}}$. Hence,
   \begin{multline*}
   \Big(
     H^{p-v}_{\P}(\bm{m} - \sum_{k \in I} {\bm{d}_k}) \otimes \bigwedge_{k \in I} e_k
   \Big)
   \oplus
   \Big(
     H^{p-v}_{\P}(\bm{m} - \sum_{k \in J} {\bm{d}_k}) \otimes \bigwedge_{k \in J} e_k
     \Big)
   \cong \\
    H^{p-v}_{\P} \Big(
     \bm{m} - 
     ( \sum_{j=1}^{B} s_j,\dots,\sum_{j=1}^{B} s_j, s_1, \dots, s_B ) - s_0 \, {\bm{d}_{0}}
     \Big)
     \otimes
   \Big(\bigwedge_{k \in I} e_k \oplus \bigwedge_{k \in J} e_k \Big) .
   \end{multline*}  
   By definition of $\eqs_1,\dots,\eqs_B$
   (\cref{def:multilinearMixed}), the set
   $\mathcal{I}_{s_0,s_1,\dots,s_B}$ is not empty if and only if
   $0 \leq s_0 \leq 1$ and
   for all $i \in \{1,\dots,B\}$ it holds  $0 \leq s_i \leq \eqs_i$.
   Hence,
   $$ \{I : I \subset \{0,\dots,N\}, \#I = p\} = \bigcup\limits_{\substack{0
       \leq s_0 \leq 1 \\ 0 \leq s_1 \leq \eqs_1 \\ \dots \\ 0 \leq s_B
       \leq \eqs_B \\ s_0 + s_1 + \dots + s_B = p}} \mathcal{I}_{s_0,s_1,\dots,s_B}.$$
   Thus, \eqref{multilinearMixed:eq:rewriteKvp} holds.
   The isomorphism in \eqref{multilinearMixed:eq:rewriteHpvGeneral} follows
   from \Cref{intro:thm:kunnethFormula}.
   \end{proof}

   In what follows, we identify the degree vectors that reduce the
   Weyman complex to have just two elements and, in this way, they
   provide us Koszul-type determinantal formulas for star multilinear
   systems (\Cref{def:KoszulDetFormula}).
   These degree vectors are associated to tuples called
   \emph{determinantal data}. The determinantal data parameterize the
   different Koszul-type determinantal formulas that we can obtain
   using the Weyman complex.

\begin{definition} \label{multilinearMixed:def:admissibleTriplet}
  Consider a partition of $\{1,\dots,B\}$ consisting of two sets $P$
  and $D$ and a constant $c \in \N$. We say that the triplet $(P,D,c)$
  is \emph{determinantal data} in the following cases:
  \begin{itemize}
  \item When $f_0$ corresponds to \textbf{Center-Vertex or Edge case}: if holds, $0 \leq c \leq A$.
  \item When $f_0$ corresponds to \textbf{Outer-Vertex case}: if the following holds,
    $$
    \begin{cases}
      c = 0 & \text{if } \sum_{j \in P} d_{0,\bm{Y}_j} = 0, \text{ or} \\
      c = A & \text{if }  \sum_{j \in D} d_{0,\bm{Y}_j} = 0.
    \end{cases}$$
  \item When $f_0$ corresponds to \textbf{Triangle case}: if the following holds,
    $$ \left\{
    \begin{array}{l}
      0 \leq c \leq A, \\
      \sum_{j \in P} d_{0,\bm{Y}_j} \leq 1, \text{ and} \\
      \sum_{j \in D} d_{0,\bm{Y}_j} \leq 1 .
    \end{array}
  \right.
  $$
    
\end{itemize}

Equivalently, we say that the triplet $(P,D,c)$ is \emph{determinantal
  data} for the multidegree ${\bm{d}_0}$ if the following conditions
are satisfied:
   \begin{align}\label{multilinearMixed:eq:partition,eq:valuesForC}
     \left\{
     \begin{array}{c l}
       \sum_{j \in P} d_{0,\bm{Y}_j} \leq 1 \\
       \sum_{j \in D} d_{0,\bm{Y}_j} \leq 1 \\
       0 \leq c \leq A  & \text{ when } (\forall\, i \in [A]) \, \text{ it holds } d_{0,\bm{X}_i} = 1, \\
       c = 0  & \text{ when } (\forall\, i \in [A]) \, \text{ it holds } d_{0,\bm{X}_i} = 0 \text{ and } \sum_{j \in P} d_{0,\bm{Y}_j} = 0, \\
       c = A & \text{ when } (\forall\, i \in [A]) \, \text{ it holds } d_{0,\bm{X}_i} = 0 \text{ and } \sum_{j \in D} d_{0,\bm{Y}_j} = 0.
     \end{array}
     \right.
   \end{align}  
 \end{definition}
 
 \begin{theorem} \label{multilinearMixed:thm:caseUnmixedMixed}
   \!\!\!\!
   Consider a generic overdetermined system $\bm{F} \! = \! (F_0,\dots,F_N)$ \!\!
   in \! $\Z[\bm{u}][\text{\small $\bm{\bar{X},\bar{Y}}$}]$ of multidegrees
   ${\bm{d}_0},\dots,{\bm{d}_N}$ (\Cref{intro:def:genMultiHom}), where
   $(F_1,\dots,F_n)$ is a square star multilinear system. Assume that
   for every $j \in \{1,\dots,B\}$ it holds $\eqs_j \geq \beta_j$
   (see \Cref{intro:thm:sumOfVariablesStar}) and that the multidegree of $F_0$, ${\bm{d}_0}$, is a
   solution of the system in
   \eqref{multilinearMixed:eq:casesForD0inMixedUnmixed}.
   Then, for each {determinantal data} $(P,D,c)$
   (\Cref{multilinearMixed:def:admissibleTriplet})  and a
   permutation $\sigma : \{1,\dots,A\} \rightarrow \{1,\dots,A\}$,
   the degree vector
   $\bm{m} =
   (m_{\bm{X}_1},\dots,m_{\bm{X}_A},m_{\bm{Y}_1},\dots,m_{\bm{Y}_B})$,
   defined by
   \begin{align*}
          \left\{
     \begin{array}{lcl}
             m_{\bm{X}_{i}} = \sum_{j \in D} \beta_{j} + \sum_{k = 1}^{\sigma(i)-1} \alpha_{\sigma^{-1}(k)} + d_{0,\bm{X}_i} && \text{for } 1 \leq i \leq A  \text{ and } \sigma(i) > c \\            
             m_{\bm{X}_{i}} = \sum_{j \in D} \beta_{j} + \sum_{k = 1}^{\sigma(i)-1} \alpha_{\sigma^{-1}(k)} - 1 && \text{for } 1 \leq i \leq A  \text{ and } \sigma(i) \leq c \\            
             m_{\bm{Y}_j} = \eqs_j - \beta_j + d_{0,{\bm{Y}_j}} && \text{for } j \in P \\
             m_{\bm{Y}_j} = -1 && \text{for } j \in D
            \end{array}\right.
   \end{align*}
   corresponds to the Koszul-type determinantal
   formula (\Cref{def:KoszulDetFormula})
      $$K_\bullet( \bm{m}) : 0 \rightarrow
   K_{1,\omega+1}(\bm{m}) \otimes \Z[\bm{u}]
   \xrightarrow{\delta_1(\bm{m})} K_{0,\omega}(\bm{m}) \otimes
   \Z[\bm{u}] \rightarrow 0,$$ where
   $\omega = \sum_{k = 1}^c \alpha_{\sigma^{-1}(k)} + \sum_{j \in D}
   \beta_j$.
 \end{theorem}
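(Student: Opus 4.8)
The plan is to substitute the prescribed degree vector $\bm m$ into the description of the Weyman modules given by \Cref{multilinearMixed:thm:rewriteKvp} and then run a term-by-term vanishing analysis that collapses $K_\bullet(\bm m)$ onto the two modules in the statement. Fix $v,p$ and one summand of $K_{v,p}(\bm m)$: by \eqref{multilinearMixed:eq:rewriteKvp}--\eqref{multilinearMixed:eq:rewriteHpvGeneral} it is indexed by a tuple $(s_0,\dots,s_B)$ with $0\le s_0\le 1$, $0\le s_j\le \eqs_j$ and $s_0+\sum_{j=1}^B s_j = p$, together with cohomological degrees $(r_{\bm X_1},\dots,r_{\bm X_A},r_{\bm Y_1},\dots,r_{\bm Y_B})$ with $\sum r = p-v$, and it is nonzero only if every one of the $q$ projective-space cohomology factors is nonzero. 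Put $S=\sum_{j=1}^B s_j$, so $p = s_0 + S$; recall also that, by \eqref{multilinearMixed:eq:casesForD0inMixedUnmixed}, $d_{0,\bm X_i}$ is a common value $\delta_X\in\{0,1\}$. From here I would compute each twist explicitly from the formula for $\bm m$ and apply \Cref{intro:thm:possiblesRi} and \Cref{intro:thm:bottFormula}: a factor over $\Pr^{n}$ with twist $a$ is nonzero only if $a\ge 0$ (then its cohomological degree is $0$) or $a\le -n-1$ (then it is $n$), the ``forbidden'' twists being exactly $a\in\{-n,\dots,-1\}$.

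First the $\bm Y$-blocks. For $j\in P$ the twist is $(\eqs_j-s_j)-\beta_j+(1-s_0)\,d_{0,\bm Y_j}\ge -\beta_j$, so a nonzero summand forces $r_{\bm Y_j}=0$ and $s_j\le \eqs_j-\beta_j+(1-s_0)\,d_{0,\bm Y_j}$ (here $\eqs_j\ge\beta_j$ is used). For $j\in D$ the twist is $-1-s_j-s_0\,d_{0,\bm Y_j}\le-1$, so it forces $r_{\bm Y_j}=\beta_j$ and $s_j\ge\beta_j-s_0\,d_{0,\bm Y_j}$. Summing these over $j$ and using the bounds $\sum_{j\in P}d_{0,\bm Y_j}\le 1$ and $\sum_{j\in D}d_{0,\bm Y_j}\le 1$ from \Cref{multilinearMixed:def:admissibleTriplet}, the value $S$ is pinned to a short interval, and $\sum_{j\in D}r_{\bm Y_j}=\sum_{j\in D}\beta_j$ is already determined.

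The main obstacle is the $\bm X$-blocks. After substitution, the $\bm X_i$-twist equals $\sum_{j\in D}\beta_j+\sum_{k=1}^{\sigma(i)-1}\alpha_{\sigma^{-1}(k)}+(1-s_0)\delta_X-S$ if $\sigma(i)>c$, and $\sum_{j\in D}\beta_j+\sum_{k=1}^{\sigma(i)-1}\alpha_{\sigma^{-1}(k)}-1-s_0\delta_X-S$ if $\sigma(i)\le c$. As $\sigma(i)$ runs through $1,\dots,A$, the quantities $\sum_{k=1}^{\sigma(i)-1}\alpha_{\sigma^{-1}(k)}$ form a strictly increasing ``staircase'' whose $i$-th step has height $\alpha_i$, and the forbidden set of twists for the $\sigma(i)$-th factor has exactly $\alpha_i$ elements; this fine tuning is what makes the reduction possible. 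The crux is an interleaving argument: for the $A$ nonvanishing conditions to hold simultaneously, the only admissible pattern is that every $\bm X_i$ with $\sigma(i)\le c$ contributes $H^{\alpha_i}$ and every one with $\sigma(i)>c$ contributes $H^{0}$; the tightest of the $\sigma(i)>c$ conditions then yields $S\le\omega+(1-s_0)\delta_X$ and the tightest of the $\sigma(i)\le c$ conditions yields $S\ge\omega-s_0\delta_X$, whereas any other pattern pushes some twist into its forbidden set and kills the summand. Establishing this interleaving is where the case distinction on $f_0$ (Center-Vertex, Outer-Vertex, Edge, Triangle) and the constraints on $c$ in \Cref{multilinearMixed:def:admissibleTriplet} are used — they prevent the staircase from stepping over the feasible value of $S$ — and the extreme cases $c=0$ and $c=A$, where one of the two families of $\bm X$-blocks is empty, have to be treated separately, falling back on the bound $S\le N$ coming from $s_j\le\eqs_j$ and on the $\bm Y$-block inequalities above.

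Combining the two analyses, every nonzero summand of $K_{v,p}(\bm m)$ satisfies $p-v=\sum_i r_{\bm X_i}+\sum_j r_{\bm Y_j}=\sum_{\sigma(i)\le c}\alpha_i+\sum_{j\in D}\beta_j=\omega$, while the surviving tuples have $\omega-s_0\delta_X\le S\le\omega+(1-s_0)\delta_X$, forcing $p=s_0+S\in\{\omega,\omega+1\}$ and hence $v=p-\omega\in\{0,1\}$. Therefore $K_v(\bm m)=0$ for $v\notin\{0,1\}$, $K_0(\bm m)=K_{0,\omega}(\bm m)\otimes\Z[\bm u]$ and $K_1(\bm m)=K_{1,\omega+1}(\bm m)\otimes\Z[\bm u]$, i.e.\ $K_\bullet(\bm m)$ is exactly the two-term complex asserted. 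Finally, \Cref{intro:def:KoszulMap} identifies $\delta_1(\bm m)$ with the Koszul map $\sum_{k=0}^N\mu_{F_k}\otimes\inDev_k$, \Cref{def:KoszulDetFormula} names this a Koszul-type determinantal formula, \Cref{intro:res:thm:weymanComplex} shows its determinant equals $\res(\bm F)$, and \Cref{intro:thm:sizeVectorSpacesKoszulMap} gives that the common rank of $K_0(\bm m)$ and $K_1(\bm m)$ equals $\text{degree}(\res(\bm F))$.
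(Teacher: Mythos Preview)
Your proposal is correct and follows essentially the same route as the paper's proof: substitute $\bm m$ into the decomposition of \Cref{multilinearMixed:thm:rewriteKvp}, force $r_{\bm Y_j}=0$ for $j\in P$ and $r_{\bm Y_j}=\beta_j$ for $j\in D$ via \Cref{intro:thm:possiblesRi}, then pin down the $\bm X$-pattern and squeeze $v\in\{0,1\}$, treating $c=0$ and $c=A$ separately using the summed $\bm Y$-inequalities. The only cosmetic difference is that the paper writes the ``interleaving'' step as an explicit induction on $i$ (upward through $1,\dots,c$ for Case X.1 and downward through $A,\dots,c+1$ for Case X.2) rather than invoking the staircase picture; you should be prepared to spell out that induction, since the claim ``any other pattern pushes some twist into its forbidden set'' is exactly what the induction verifies.
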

 
 \begin{proof}
   To simplify the presentation of the proof, we assume with no loss
   of generality that $\sigma$ is the identity map.
   We rewrite \eqref{eq:Kvp} using
   \Cref{multilinearMixed:thm:rewriteKvp}. Hence, we obtain the
   following isomorphism,
   \begin{align}
     \label{multilinearMixed:eq:rewriteHpv}
     \small
     H^{p-v}_{\P} \Big(
     \bm{m} - 
     ( \sum_{j=1}^{B} s_j,\dots,\sum_{j=1}^{B} s_j, s_1, \dots, s_B) - s_0 \, {\bm{d}_{0}}
   \Big) \cong       \nonumber  \\
   \bigoplus_{\substack{
       \sum_i r_{{\bm{X}_i}}  + \sum_j r_{\bm{Y}_j} = p-v
     }} 
      \left( \arraycolsep=1pt
       \begin{array}{c l}
         \displaystyle
       \bigotimes_{j \in P}  H^{r_{\bm{Y}_j}}_{\Pr^{\beta_j}}(\eqs_j - \beta_j + d_{0,{\bm{Y}_j}} - s_j - s_0 \, d_{0,{\bm{Y}_j}})  \;\otimes 
       & \text{\footnotesize [Case Y.1]} 
       \\  \displaystyle
        \bigotimes_{j \in D}
         H^{r_{\bm{Y}_j}}_{\Pr^{\beta_j}}(-1 - s_j  - s_0 \, d_{0,{\bm{Y}_j}})
         \; \otimes
       & \text{\footnotesize [Case Y.2]} 
         \\ 
         \displaystyle
                  \bigotimes_{i = 1}^c
       H^{r_{{\bm{X}_i}}}_{\Pr^{\alpha_{i}}} \Big(\sum_{j \in D} \beta_j + \sum_{k=1}^{i-1} \alpha_{k} - 1 - \sum_{j=1}^{B} s_j - s_0 \, d_{0,\bm{X}_i}\Big) \; \otimes & \text{\footnotesize [Case X.1]} \\\displaystyle
                  \bigotimes_{i = c+1}^A \!\!
       H^{r_{{\bm{X}_i}}}_{\Pr^{\alpha_{i}}} \Big(\sum_{j \in D} \beta_j + \sum_{k=1}^{i-1} \alpha_{k} + d_{0,\bm{X}_i} - \sum_{j=1}^{B} s_j - s_0 \, d_{0,{\bm{X}_i}} \!\Big)
       & \text{\footnotesize [Case X.2]} 
     \end{array}
       \right)
   \end{align}

   \noindent
   We will study the values for
   $p,v,s_0,\dots,s_B,r_{{\bm{X}_1}},\dots,r_{{\bm{X}_A}},r_{\bm{Y}_1},\dots,r_{\bm{Y}_B}$
   such that $K_{v,p}(\bm{m})$ \eqref{multilinearMixed:eq:rewriteKvp} does not
   vanish. Clearly, if $0 \leq s_0 \leq 1$ and
   $(\forall i \in \{1,\dots,B\}) \; 0 \leq s_i \leq \eqs_i$, then the
   module
   $\bigoplus_{I \in \mathcal{I}_{s_0,s_1,\dots,s_B}} \bigwedge_{k \in
     I} e_k$ is not zero. Hence, assuming $0 \leq s_0 \leq 1$ and
   $(\forall i \in \{1,\dots,B\}) \; 0 \leq s_i \leq \eqs_i$, we study
   the vanishing of the modules in
   \eqref{multilinearMixed:eq:rewriteHpv}.
   By \Cref{intro:thm:bottFormula}, the modules in the right hand side
   of \eqref{multilinearMixed:eq:rewriteHpv}
   are not zero only when, for $1 \leq i \leq A$,
   $r_{\bm{X}_i} \in \{0,\alpha_i\}$ and, for $1 \leq j \leq B$,
   $r_{\bm{Y}_j} \in \{0,\beta_j\}$.
   We can use \Cref{intro:thm:possiblesRi} to show that if
   \eqref{multilinearMixed:eq:rewriteHpv} does not vanish, then
   we have the following cases
     \begin{align} \label{multilinearMixed:eq:valuesForR}
       \hfill
     \begin{array}{| c | l | c| }
       \hline
       \text{[Case Y.1]}  &
                            \text{For } j \in P &
              \begin{array}{c}
                r_{\bm{Y}_j} = 0 \text{ and }
                                 \eqs_j \!-\! \beta_j \!+\! d_{0,\bm{Y_j}} \geq s_j \!+\! s_0 \, d_{0,\bm{Y_j}}
              \end{array}
       \\[10px] \hline
       \text{[Case Y.2]}  &
                            \text{For } j \in D &
              \begin{array}{c}
              r_{\bm{Y}_j} = \beta_j \text{ and }
                s_j + s_0 \, d_{0,\bm{Y_j}} \geq \beta_j
              \end{array}
       \\[10px] \hline
       \text{[Case X.1]}  &
                            \text{For } 1 \leq i \leq c &
              \begin{array}{c}
                \displaystyle
                r_{\bm{X}_{i}} = \alpha_{i} \text{ and }
                \displaystyle
                \sum_{j=1}^{B} s_j + s_0 \,
                d_{0,\bm{X}_{i}} \geq \sum_{j \in D} \beta_j +
                \sum_{k = 1}^{i} \alpha_{\bm{X}_{k}}
              \end{array}
       \\[20px] \hline
       \text{[Case X.2]} &
                           \text{For } c < i \leq A &
              \begin{array}{c}
                r_{\bm{X}_{i}} = 0 \text{ and } 
                \displaystyle
                \sum_{j \in D} \beta_{j} +
                \sum_{k = 1}^{i-1} \alpha_{k} 
                \geq \sum_{j=1}^{B} s_j + (s_0 - 1) \, \,
                d_{0,\bm{X}_{i}}
              \end{array}
       \\[20px] \hline
     \end{array}
       \hspace*{-1px}
   \end{align}

 From \eqref{multilinearMixed:eq:valuesForR}, we can deduce the
   possible values for $v$ such that $K_{v,p}(\bm{m})$ does not
   vanish. From \eqref{multilinearMixed:eq:rewriteKvp}, it holds
   $p = \sum_{j=1}^{B} s_j + s_0$. 
   By \Cref{intro:thm:kunnethFormula},
   $p - v = \sum_{i = 1}^A r_{\bm{X}_i} + \sum_{j = 1}^B r_{\bm{Y_j}}$.
   Hence, we deduce that
     $$v = \sum_{j=1}^{B} s_j + s_0 -
     \sum_{j \in D} \beta_j - \sum_{i = 1}^c \alpha_{i}.$$
     \begin{itemize}[leftmargin=*]
     \item First we provide a lower bound for $v$. 
     Assume that $c > 0$. By [Case X.1], if
     $i = c$, then
     $$\sum_{j=1}^{B} s_j + s_0 \,
     d_{0,\bm{X}_{c}} \geq \sum_{j \in D} \beta_j +
     \sum_{k = 1}^{c} \alpha_{\bm{X}_{k}}.$$
     
     Hence, as $0 \leq s_0 , d_{0,\bm{X}_{c+1}} \leq 1$, we conclude that $v \geq 0$ as
     $$v = s_0  + \sum_{j=1}^{B} s_j -
     \sum_{j \in D} \beta_j - \sum_{k = 1}^c \alpha_{k}
     \geq s_0 \, (1-d_{0,\bm{X}_{c}})
     \geq 0.
     $$

     Assume instead that $c = 0$. Then
     $v = \sum_{j=1}^{B} s_j + s_0 - \sum_{j \in D} \beta_j$.
     By [Case Y.2], for each $j \in D$,
     $\beta_j \leq s_j + s_0 \, d_{0,\bm{Y_j}}$. Moreover, it holds,
     for each $j \in P$, $0 \leq s_j$. Adding the inequalities we
     deduce that,
     \begin{align}
       \label{multilinearMixed:eq:sumIneqsD}
       \sum_{j = 1}^B
       s_j + s_0 \, \sum_{j \in D} d_{0,\bm{Y_j}} \geq \sum_{j \in D}
       \beta_j.
     \end{align}
     By definition, $0 \leq \sum_{j \in D} d_{0,\bm{Y_j}} \leq
     1$. Hence, by \eqref{multilinearMixed:eq:sumIneqsD}, $v \geq 0$ as,
     $ v \geq s_0 - s_0 \sum_{j \in D} d_{0,\bm{Y_j}} \geq 0. $
     
   \item Finally we provide an upper bound for $v$.
     Assume that $c < A$. 
     By [Case X.2], if we consider $i = c + 1$, then
     $$\sum_{j \in D} \beta_{j} + \sum_{k = 1}^{c} \alpha_{k}
     \geq \sum_{j=1}^{B} s_j + (s_0 - 1) \, \,
     d_{0,\bm{X}_{c+1}}.$$
     Hence we conclude that $v \leq 1$, as $0 \leq s_0,d_{0,\bm{X}_{c+1}} \leq 1$ and so,
     $$v = s_0 + \sum_{j=1}^{B} s_j  -
     \sum_{j \in D} \beta_j - \sum_{k = 1}^c \alpha_{k}
     \leq s_0 + (1-s_0) \, d_{0,\bm{X}_{c+1}} \leq 1.
     $$

     Assume instead that $c = A$. Then
     $v = \sum_{j=1}^{B} s_j + s_0 - \sum_{j \in D} \beta_j -
     \sum_{i=1}^A \alpha_i$.
     By [Case Y.1], for $j \in P$,
     $\eqs_j - \beta_j + d_{0,\bm{Y_j}} \geq s_j + s_0 \,
     d_{0,\bm{Y_j}}$. Moreover, it holds, for each $j \in D$,
     $\eqs_j \geq s_j$.  As the system $(F_1,\dots,F_N)$ is square, it
     holds
     $\sum_{j = 1}^B \eqs_j = \sum_{i = 1}^A \alpha_i + \sum_{j \in P}
     \beta_j + \sum_{j \in D} \beta_j$. Hence, adding the inequalities
     we obtain
     \begin{align} \label{multilinearMixed:eq:sumIneqsP}
       \sum_{i = 1}^A \alpha_i + \sum_{j \in D} \beta_j
       =
       \sum_{j = 1}^B \eqs_j - \sum_{j \in P} \beta_j
       \geq \sum_{j = 1}^B s_j +
       (s_0 - 1)  \sum_{j \in P} d_{0,\bm{Y_j}}.
     \end{align}
     By definition,
     $0 \leq s_0 , \sum_{j \in P} d_{0,\bm{Y_j}} \leq 1$. Hence, by
     \eqref{multilinearMixed:eq:sumIneqsP}, $v \leq 1$ as,
     $$v \leq s_0 - (s_0 - 1) \,  \sum_{j \in P} d_{0,\bm{Y_j}} \leq 1.$$
   \end{itemize}
   
   We conclude that the possible values for $v$, $p$,
   $r_{\bm{X}_1},\dots,r_{\bm{X}_A}$,
   $r_{\bm{Y}_1},\dots,r_{\bm{Y}_B}$ such that
   \eqref{multilinearMixed:eq:rewriteHpv} is not zero are
   $v \in \{0, 1\}$, the possible values for $r_{\bm{X}_1},\dots,r_{\bm{X}_A}$,
   $r_{\bm{Y}_1},\dots,r_{\bm{Y}_B}$ are the ones in
   \eqref{multilinearMixed:eq:valuesForR} and
   $p = \sum_{k = 1}^c \alpha_{k} + \sum_{j \in D} \beta_j
   + v$.  Let
   $\omega = \sum_{k = 1}^c \alpha_{k} + \sum_{j \in D}
   \beta_j$. Hence, our Weyman complex looks like
   \eqref{eq:determinantalMap}, where
   $$\delta_1(\bm{m}) : K_{1,\omega+1}(\bm{m}) \otimes \Z[\bm{u}]
   \to K_{0,\omega}(\bm{m}) \otimes \Z[\bm{u}].$$
   In what follows, we prove each case in
   \eqref{multilinearMixed:eq:valuesForR}. Consider the modules related
   to the variables $\bm{Y_j}$, for $j \in \{1,\dots,B\}$.
   
\textbf{Case (Y.1)}
We consider the modules that involve the variables in the
block $\bm{Y_j}$, for $j \in P$.
As $s_j \leq \eqs_j$ and $s_0, d_{0,\bm{Y_j}} \leq 1$, it holds
$\eqs_j - \beta_j + d_{0,\bm{Y_j}} - s_j - s_0 \, d_{0,\bm{Y_j}} > - \beta_j - 1$. Hence, by 
\Cref{intro:thm:possiblesRi},
\begin{align}
  \label{multilinearMixed:eq:ineqsYP} 
  H^{r_{\bm{Y}_j}}_{\Pr^{\beta_j}}(\eqs_j  -  \beta_j  +  d_{0,\bm{Y_j}}
   - s_j  -  s_0 \, d_{0,\bm{Y_j}})  \neq  0
  \iff \\  \nonumber
    r_{\bm{Y}_j}  = 0  \text{ and }
    \eqs_j  -  \beta_j  + 
    d_{0,\bm{Y_j}} \geq s_j  +  s_0 \, d_{0,\bm{Y_j}}.
\end{align}

\textbf{Case (Y.2)}
We consider the modules that involve the variables in the
block $\bm{Y_j}$, for $j \in D$.
As $s_j, s_0, d_{0,\bm{Y_j}} \geq 0$, then
$-1 - s_j - s_0 \, d_{0,\bm{Y_j}} < 0$. Hence, by 
\Cref{intro:thm:possiblesRi},
\begin{align}  \label{multilinearMixed:eq:ineqsYD}
    H^{r_{\bm{Y}_j}}_{\Pr^{\beta_j}}(-1 - s_j  - s_0 \, d_{0,\bm{Y_j}}) \neq 0 \iff  
    r_{\bm{Y}_j} = \beta_j \quad \text{ and } \quad s_j + s_0 \, d_{0,\bm{Y_j}} \geq \beta_j  .
\end{align}

Now we consider the cohomologies related to the blocks of variables
$\bm{X_i}$, for $i \in \{1,\dots,A\}$. We assume that the cohomologies
related to the blocks of variables $\bm{Y_j}$ do not vanish.
 
\textbf{Case (X.1)} We consider the module related to the blocks
$\bm{X}_{1}\dots,\bm{X}_{c}$. We only need to
consider this case if $c > 0$, so we assume $c > 0$.
We prove that for each $1 \leq i \leq c$,
if the cohomologies related to the variables in the blocks $\bm{Y}_j$,
for $1 \leq j \leq B$, and the ones related to
$\bm{X}_{1}\dots,\bm{X}_{i-1}$, do not
vanish, then
\begin{align}
  \label{multilinearMixed:eq:ineqsXD}
  \begin{gathered}        
    H^{r_{\bm{X}_{k}}}_{\Pr^{\alpha_{i}}}
    \Big( m_{\bm{X}_{i}} - \sum_{j=1}^{B} s_j -
    s_0 \, d_{0,\bm{X}_{i}} \Big) \neq 0
    \iff \\ 
    \quad 
       r_{\bm{X}_{i}} = \alpha_{i} \text{
         and }  \sum_{j=1}^{B} s_j + s_0 \,
       d_{0,\bm{X}_{i}} \geq \sum_{j \in D} \beta_j +
       \sum_{k = 1}^{i} \alpha_{\bm{X}_{k}}.
     \end{gathered}
\end{align}

We proceed by induction on $1  \leq i \leq c$.

\begin{itemize}[leftmargin=*]
\item Consider $i = 1$ and the cohomology related to the block
  $\bm{X}_{1}$,
  $$
  H^{r_{\bm{X}_{1}}}_{\Pr^{\alpha_{1}}}
  \Big( \sum_{j \in D} \beta_{j} - 1 - \sum_{j=1}^{B} s_j - s_0 \,
  d_{0,\bm{X}_{1}} \Big).
  $$
  As we assumed that $c > 0$ and the triplet $(P,D,c)$ is
  determinantal data (\cref{multilinearMixed:def:admissibleTriplet}),
  by definition either $d_{0,\bm{X}_{1}} = 1$ or both
  $d_{0,\bm{X}_{1}} = 0$ and
  $\sum_{j \in D} d_{0,\bm{Y_j}} = 0$. Also, it holds
  $0 \leq s_0, \sum_{j \in D} d_{0,\bm{Y_j}} \leq 1$. Hence, from
  \eqref{multilinearMixed:eq:sumIneqsD}, we conclude that,
  $$
  \sum_{j \in D} \beta_{j} - 1 - \sum_{j=1}^{B} s_j - s_0 \,
  d_{0,\bm{X}_{k}}
  \leq
  s_0 \, \sum_{j \in D} d_{0,\bm{Y_j}} - 1 -  s_0 \,
  d_{0,\bm{X}_{k}}
  < 0
  $$
  Therefore,  by 
  \Cref{intro:thm:possiblesRi},
  \begin{align*}
    \begin{gathered}
      H^{r_{\bm{X}_{1}}}_{\Pr^{\alpha_{1}}}
      \Big( \sum_{j \in D} \beta_{j} - 1 - \sum_{j=1}^{B} s_j - s_0
      \, d_{0,\bm{X}_{1}} \Big) \neq 0
      \iff \\
      r_{\bm{X}_{1}} = \alpha_{1} \quad \text{
        and } \quad
      \sum_{j=1}^{B} s_j + s_0 \, d_{0,\bm{X}_{1}}
      \geq  \sum_{j \in D} \beta_j + \alpha_{\bm{X}_{1}}
    \end{gathered}
  \end{align*}

\item We proceed by induction, assuming that \eqref{multilinearMixed:eq:ineqsXD} holds for
  $i-1$, we prove the property for $i$. We consider the cohomology
  $$H^{r_{\bm{X}_{i}}}_{\Pr^{\alpha_{i}}}
  \Big( \sum_{j \in D} \beta_{j} - 1 + \sum_{k = 1}^{i-1}
  \alpha_{k} - \sum_{j=1}^{B} s_j - s_0 \,
  d_{0,\bm{X}_{i}} \Big).$$
  
  By definition
  (see \eqref{multilinearMixed:eq:casesForD0inMixedUnmixed}),
  $d_{0,\bm{X}_{i-1}} = d_{0,\bm{X}_{i}}$,
  and by inductive hypothesis, if the previous modules do not
  vanish, then
  $$
  \sum_{j=1}^{B} s_j + s_0 \, d_{0,\bm{X}_{i}}  =
  \sum_{j=1}^{B} s_j + s_0 \, d_{0,\bm{X}_{i-1}} \geq
  \sum_{j \in D} \beta_j + \sum_{k = 1}^{i-1}
  \alpha_{\bm{X}_{k}}.$$
  Hence,  by 
  \Cref{intro:thm:possiblesRi},
  \begin{align*}
    \begin{gathered}
      H^{r_{\bm{X}_{i}}}_{\Pr^{\alpha_{i}}} \Big(
      \sum_{j \in D} \beta_{j} - 1 + \sum_{k = 1}^{i-1}
      \alpha_{k} - \sum_{j=1}^{B} s_j - s_0 \,
      d_{0,\bm{X}_{i}} \Big) \neq 0 \iff \\
    r_{\bm{X}_{i}} = \alpha_{i}
    \quad \text{and} \quad
    \sum_{j=1}^{B}
    s_j + s_0 \, d_{0,\bm{X}_{i}} \geq \sum_{j \in D}
    \beta_{j} + \sum_{k = 1}^{i-1} \alpha_{k} + \alpha_{i}.
  \end{gathered}
  \end{align*}
\end{itemize}

\textbf{Case (X.2)}
  We consider the module related to the blocks
  $\bm{X}_{c + 1}\dots,\bm{X}_{A}$. We only need
  to consider this case if $c < A$, so we assume $c < A$.
  We prove that for each $c < i \leq A$, if the cohomologies related to
  the variables in the blocks  $\bm{Y}_j$, for $1 \leq j \leq B$, and
  the ones related to
  $\bm{X}_{i+1},\bm{X}_{i+2},\dots,\bm{X}_{A}$,
  do not vanish, then
       \begin{align}
       \label{multilinearMixed:eq:ineqsXP}
       \begin{gathered}
         H^{r_{\bm{X}_{i}}}_{\Pr^{\alpha_{i}}}
       \Big( \sum_{j \in D} \beta_{j} +
       \sum_{k = 1}^{i-1} \alpha_{k} + (1 - s_0) \, d_{0,\bm{X}_{i}}
       - \sum_{j=1}^{B} s_j  \Big) \neq 0 \iff \\ 
       r_{\bm{X}_{i}} = 0 \quad \text{and} \quad \sum_{j \in D} \beta_{j} +
       \sum_{k = 1}^{i-1} \alpha_{k} 
       \geq \sum_{j=1}^{B} s_j + (s_0 - 1) \, \,
       d_{0,\bm{X}_{i}}.
     \end{gathered}
       \end{align}

       We proceed by induction.
       
     \begin{itemize}[leftmargin=*]
     \item Consider $i = A$ and the cohomology related to the block
     $\bm{X}_{A}$,
     $$H^{r_{\bm{X}_{A}}}_{\Pr^{\alpha_{A}}}
     \Big( \sum_{j \in D} \beta_{j} + \sum_{k = 1}^{A-1}
     \alpha_{k} + (1 - s_0) \,
     d_{0,\bm{X}_{A}} - \sum_{j=1}^{B} s_j \Big).$$   
     
     As we assumed that $c < A$ and the triplet $(P,D,c)$ is
     determinantal data (\Cref{multilinearMixed:def:admissibleTriplet}), by
     definition, either $d_{0,\bm{X}_{A}} = 1$ or both
     $d_{0,\bm{X}_{A}} = 0$ and
     $\sum_{j \in P} d_{0,\bm{Y_j}} = 0$. Also it holds
     $0 \leq s_0, \sum_{j \in P} d_{0,\bm{Y_j}} \leq 1$. Hence, from
     \eqref{multilinearMixed:eq:sumIneqsP}, we conclude that
     $$
     \sum_{j \in D} \beta_{j} + \sum_{i = 1}^{A-1} \alpha_{i} +  (1 - s_0) \, d_{0,\bm{X}_{A}}
     - \sum_{j=1}^{B} s_j \geq - \alpha_{A}
     $$
     Therefore, by \Cref{intro:thm:possiblesRi},
     \begin{gather} \label{multilinearMixed:eq:ineqsCaseA}
       \nonumber
       H^{r_{\bm{X}_{A}}}_{\Pr^{\alpha_{A}}}
       \Big(
         \sum_{j \in D} \beta_{j} + \sum_{k = 1}^{A-1}
         \alpha_{k} + (1 - s_0) \, d_{0,\bm{X}_{A}} - \sum_{j=1}^{B} s_j
       \Big) \neq 0 \iff \\
       r_{\bm{X}_{A}} = 0 \text{ and } \sum_{j \in D} \beta_{j} + \sum_{k = 1}^{A-1}
       \alpha_{k} \geq \sum_{j=1}^{B} s_j + (s_0 -1)\,\,
       d_{0,\bm{X}_{A}}.
     \end{gather}

   \item We proceed by induction, assuming that
     \eqref{multilinearMixed:eq:ineqsXP} holds for $i+1 \leq A$, we
     prove the property for $i > c$. We consider the cohomology
  $$H^{r_{\bm{X}_{i}}}_{\Pr^{\alpha_{i}}} \Big(
    \sum_{j \in D} \beta_{j} + \sum_{k = 1}^{i-1}
    \alpha_{k} + (1 - s_0) \, d_{0,\bm{X}_{i}} -
    \sum_{j=1}^{B} s_j \Big).$$
  By definition (see \eqref{multilinearMixed:eq:casesForD0inMixedUnmixed}),
  $d_{0,\bm{X}_{i+1}} = d_{0,\bm{X}_{i}}$. So,
  if the previous cohomologies do not vanish, by induction hypothesis,
  $$
  \sum_{j \in D} \beta_{j} + \sum_{k = 1}^{i} \alpha_{k}
  \geq
  (s_0 - 1) \, \, d_{0,\bm{X}_{{i + 1}}} + \sum_{j=1}^{B} s_j
  =
  (s_0 - 1) \, \, d_{0,\bm{X}_{{i}}} + \sum_{j=1}^{B} s_j
  $$
  Equivalently,
  $$
  \sum_{j \in D} \beta_{j} + \sum_{k = 1}^{i - 1} \alpha_{k} +
  (1 - s_0) \, \, d_{0,\bm{X}_{{i}}} - \sum_{j=1}^{B} s_j
  \geq
   - \alpha_{i}.
   $$
   Hence,  by 
   \Cref{intro:thm:possiblesRi},
   \begin{align*}
        \begin{gathered}
       H^{r_{\bm{X}_{i}}}_{\Pr^{\alpha_{i}}}
       \Big( \sum_{j \in D} \beta_{j} +
       \sum_{k = 1}^{i-1} \alpha_{k} + (1- s_0) \,
       d_{0,\bm{X}_{i}}
       - \sum_{j=1}^{B} s_j  \Big) \neq 0 \iff  \\
       r_{\bm{X}_{i}} = 0
       \quad \text{and} \quad
       \sum_{j \in D} \beta_{j} +
       \sum_{k = 1}^{i-1} \alpha_{k} 
       \geq \sum_{j=1}^{B} s_j + (s_0 - 1) \, \,
       d_{0,\bm{X}_{i}}.
     \end{gathered}
   \end{align*}
   \end{itemize}

 \end{proof}

 The previous theorem gives us Sylvester-like determinantal formulas
 in some cases.
 
 \begin{corollary}[Sylvester-type formulas]
   \label{multilinearMixed:thm:sylvType}
   Consider $\bm{d}_0$ corresponding to the Center-Vertex or Edge case.
   Let
   $\sigma : \{1,\dots,A\} \rightarrow \{1,\dots,A\}$ be any
   permutation and consider the determinantal data
   $(\{1,\dots,B\},\emptyset,0)$. Then, by
   \Cref{intro:thm:sylvesterMatrix}, the overdetermined systems from
   \Cref{multilinearMixed:thm:caseUnmixedMixed} have a
   \emph{Sylvester-like formula} coming from the degree vector
   $\bm{m}$ related to the determinantal data
   $(\{1,\dots,B\},\emptyset,0)$ and the permutation $\sigma$.
 \end{corollary}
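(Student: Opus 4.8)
The plan is to instantiate \Cref{multilinearMixed:thm:caseUnmixedMixed} with the particular determinantal data $(P,D,c)=(\{1,\dots,B\},\emptyset,0)$, observe that the invariant $\omega$ attached to this data vanishes, and then recognize the resulting two-term complex as a Sylvester map via \Cref{intro:def:KoszulMap} and \Cref{intro:thm:sylvesterMatrix}. First I would check that $(\{1,\dots,B\},\emptyset,0)$ really is determinantal data (\Cref{multilinearMixed:def:admissibleTriplet}) for a multidegree $\bm{d}_0$ of Center-Vertex or Edge type, and that the hypotheses of \Cref{multilinearMixed:thm:caseUnmixedMixed} hold. In both cases $d_{0,\bm{X}_i}=1$ for every $i\in[A]$, so among the conditions of \Cref{multilinearMixed:def:admissibleTriplet} the governing one is $0\le c\le A$, which is satisfied by $c=0$; moreover $\sum_{j\in D}d_{0,\bm{Y}_j}=0\le 1$, while $\sum_{j\in P}d_{0,\bm{Y}_j}$ equals $0$ in the Center-Vertex case and $1$ in the Edge case, hence also $\le 1$; and $P=\{1,\dots,B\}$, $D=\emptyset$ do partition $\{1,\dots,B\}$. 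Finally $\bm{d}_0$ solves \eqref{multilinearMixed:eq:casesForD0inMixedUnmixed} by the discussion preceding the theorem, and the remaining hypotheses (square star multilinear system, $\eqs_j\ge\beta_j$ for all $j$) are inherited verbatim from \Cref{multilinearMixed:thm:caseUnmixedMixed}.

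Next I would apply \Cref{multilinearMixed:thm:caseUnmixedMixed} to this data together with the given permutation $\sigma$. The quantity $\omega=\sum_{k=1}^{c}\alpha_{\sigma^{-1}(k)}+\sum_{j\in D}\beta_j$ is a sum of two empty sums, so $\omega=0$, and the theorem produces, for the degree vector $\bm{m}$ it describes (in which every block $\bm{X}_i$ falls into ``Case X.2'' since $\sigma(i)>0=c$, and every block $\bm{Y}_j$ lies in $P$), the Koszul-type determinantal formula
\[
0 \longrightarrow K_{1,1}(\bm{m})\otimes\Z[\bm{u}] \xrightarrow{\ \delta_1(\bm{m})\ } K_{0,0}(\bm{m})\otimes\Z[\bm{u}] \longrightarrow 0 .
\]

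To conclude, I would unwind these two modules using \eqref{eq:Kvp}. Since the second index of $K_{1,1}(\bm{m})$ is $1$, only the singletons $I=\{k\}$ with $0\le k\le N$ contribute, and the cohomological degree is $p-v=1-1=0$; hence $K_{1,1}(\bm{m})\cong\bigoplus_{k=0}^N H^0_{\P}(\bm{m}-\bm{d}_k)\otimes e_k\cong\bigoplus_{k=0}^N \K[\bm{\bar{X},\bar{Y}}]_{\bm{m}-\bm{d}_k}\otimes e_k$, and likewise $K_{0,0}(\bm{m})\cong H^0_{\P}(\bm{m})\otimes 1\cong \K[\bm{\bar{X},\bar{Y}}]_{\bm{m}}\otimes 1$, so both modules are honest polynomial spaces with no dual $\partial$-variables. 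By \Cref{intro:def:KoszulMap} the differential is $\delta_1(\bm{m})=\sum_{k=0}^N\mu_{F_k}\otimes\inDev_k$; on the $H^0$ part $\mu_{F_k}$ is ordinary multiplication by $F_k$ (remark following the definition of $\mu$ in \Cref{sec:koszul-type-formula}), while $\inDev_k$ sends $e_k\mapsto 1$ and $e_j\mapsto 0$ for $j\ne k$. Therefore $\delta_1(\bm{m})(g_0\otimes e_0+\dots+g_N\otimes e_N)=\big(\sum_{k=0}^N g_k\,F_k\big)\otimes 1$, which is exactly the Sylvester map of \Cref{intro:thm:sylvesterMatrix} (with $p=1$, $v=0$ in the notation of that proposition). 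Since essentially all the work is carried by \Cref{multilinearMixed:thm:caseUnmixedMixed}, I do not expect a substantial obstacle; the only mildly delicate point is noticing that $\omega=0$ forces the two surviving modules to be built from $H^0$-cohomology alone, which is precisely what makes the Koszul differential collapse to Sylvester multiplication.
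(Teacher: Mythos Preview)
Your proposal is correct and follows exactly the route the paper intends: the paper gives no separate proof for this corollary, treating it as an immediate consequence of \Cref{multilinearMixed:thm:caseUnmixedMixed} together with \Cref{intro:thm:sylvesterMatrix}, and you have simply spelled out the verification that $(\{1,\dots,B\},\emptyset,0)$ is determinantal data in the Center-Vertex and Edge cases, that $\omega=0$, and that the resulting two-term complex $K_{1,1}\to K_{0,0}$ is built from $H^0$-pieces so that $\delta_1$ is the Sylvester map.
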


 For each determinantal formula given by
 \Cref{multilinearMixed:thm:caseUnmixedMixed}, we get another one from
 its dual.
 
  \begin{remark}
    Consider a degree vector $\bm{m}$ related to the determinantal data
    $(P,D,c)$ and the permutation $\sigma$. Then, the triplet $(D,P,A-c)$
    is also determinantal data and the map $i \mapsto (A+1-\sigma(i))$ is a
    permutation of $\{1,\dots,A\}$. Let $\bm{\bar{m}}$ be the degree
    vector associated to $(D,P,A-c)$ and $i \mapsto (A+1-\sigma(i))$,
    then, by \Cref{prop:dualDegreeVectors}, $K_\bullet(\bm{m})$ is
    isomorphic to the dual complex of $K_\bullet(\bm{\bar{m}})$.
  \end{remark}

 \subsection{Size of determinantal formulas}
 \label{multilinearMixed:sec:size-determ-form}

 Following general approaches for resultants as in
 \cite[Ch.~3]{cox2006using} or specific ideas for Koszul-type formulas
 as in \cite{bender_bilinear_2018}, we can use the matrices associated
 to the determinantal formulas from
 \Cref{multilinearMixed:thm:caseUnmixedMixed} to solve the square
 systems $(f_1,\dots,f_N)$. To express the complexity of these
 approaches in terms of the size of the output, that is, the expected
 number of solutions,  we study the size of the
 determinantal formulas of
 \Cref{multilinearMixed:thm:caseUnmixedMixed} and we compare them with
 the number of solutions of the system.
 
The \mhbb (\Cref{intro:thm:bezoutBound}) implies the following lemma.
 \begin{lemma} \label{multilinearMixed:thm:nsolsMixedUnmixed} \!\!\!\! The
   expected number of solutions, \!\!\! $\nsols$\!, \!\!\! of a square star
   multilinear system is
   
     $$\nsols := \frac{(\sum_{i = 1}^A \alpha_i)!}{\prod_{i = 1}^A \alpha_i!} \cdot
   \prod_{j = 1}^B {\eqs_j \choose \beta_j} .$$
 \end{lemma}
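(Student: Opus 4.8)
The plan is to compute $\nsols$ directly from the \mhbb (\Cref{intro:thm:bezoutBound}). I would introduce one formal variable $Z_{\bm{X}_i}$ for each block $\bm{X}_i$ and one variable $Z_{\bm{Y}_j}$ for each block $\bm{Y}_j$, and abbreviate $Z := Z_{\bm{X}_1} + \dots + Z_{\bm{X}_A}$. By \Cref{def:multilinearMixed}, a polynomial $f_k$ of the system with $j_k = j$ has multidegree $1$ in every $\bm{X}_i$, $1$ in $\bm{Y}_j$, and $0$ in the remaining $\bm{Y}$-blocks; hence its associated linear form (in the sense of \Cref{intro:thm:bezoutBound}) is $Z + Z_{\bm{Y}_j}$. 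Grouping the $\eqs_j$ polynomials that share the index $j$, \Cref{intro:thm:bezoutBound} tells us that $\nsols$ is the coefficient of the monomial $\prod_{i=1}^A Z_{\bm{X}_i}^{\alpha_i}\prod_{j=1}^B Z_{\bm{Y}_j}^{\beta_j}$ in the polynomial $\prod_{j=1}^B (Z + Z_{\bm{Y}_j})^{\eqs_j}$.

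Since the variable $Z_{\bm{Y}_j}$ occurs only in the $j$-th factor, I would extract its exponent factor by factor. By the binomial theorem, the coefficient of $Z_{\bm{Y}_j}^{\beta_j}$ in $(Z + Z_{\bm{Y}_j})^{\eqs_j}$ equals $\binom{\eqs_j}{\eqs_j - \beta_j}\, Z^{\eqs_j - \beta_j} = \binom{\eqs_j}{\beta_j}\, Z^{\eqs_j - \beta_j}$; note this vanishes exactly when $\eqs_j < \beta_j$, in agreement with \Cref{intro:thm:sumOfVariablesStar}. Multiplying these contributions over $j = 1,\dots,B$, the sought coefficient equals $\bigl(\prod_{j=1}^B \binom{\eqs_j}{\beta_j}\bigr)$ times the coefficient of $\prod_{i=1}^A Z_{\bm{X}_i}^{\alpha_i}$ in $Z^{\sum_{j=1}^B (\eqs_j - \beta_j)}$.

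It remains to identify this last coefficient. Because the system is square, $\sum_{j=1}^B \eqs_j = N = \sum_{i=1}^A \alpha_i + \sum_{j=1}^B \beta_j$ (\Cref{intro:thm:sumOfVariablesStar}), so $\sum_{j=1}^B (\eqs_j - \beta_j) = \sum_{i=1}^A \alpha_i$. By the multinomial theorem the coefficient of $\prod_{i=1}^A Z_{\bm{X}_i}^{\alpha_i}$ in $(Z_{\bm{X}_1} + \dots + Z_{\bm{X}_A})^{\sum_i \alpha_i}$ is $\frac{(\sum_{i=1}^A \alpha_i)!}{\prod_{i=1}^A \alpha_i!}$, and combining the two contributions yields the claimed value of $\nsols$. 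This argument is essentially a bookkeeping computation, so I do not expect a serious obstacle; the only points requiring care are the correct identification of the linear forms attached to the $f_k$ and the exponent count $\sum_j(\eqs_j - \beta_j) = \sum_i \alpha_i$, which is exactly what makes the surviving multinomial coefficient be the one indexed by the $\bm{X}$-blocks.
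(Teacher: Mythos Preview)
Your argument is correct and is precisely the computation the paper has in mind: the paper states only that the lemma follows from the multihomogeneous \bezout bound (\Cref{intro:thm:bezoutBound}), and you have carried out that coefficient extraction in full, including the key use of \Cref{intro:thm:sumOfVariablesStar} to match the total $\bm{X}$-degree.
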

 
 \begin{lemma}\label{multilinearMixed:thm:sizeRes}
   The degree of the resultant and the sizes of the matrices
   corresponding to the determinantal formulas of
   \Cref{multilinearMixed:thm:caseUnmixedMixed}, that is, the rank of
   the modules $K_{0}(\bm{m})$ and $K_{1}(\bm{m})$, are (See the
   beginning of \Cref{sec:det-formulas} for the definition of the four
   cases and the notation in the bounds) as follows:
   \begin{itemize}
   \item \textbf{Center-Vertex case}: The rank of the modules is
     $\displaystyle
       \nsols \cdot (1 + \sum\nolimits_{i = 1}^A \alpha_i)
     $.
   \item
     \textbf{Outer-Vertex case}: The rank of the modules is
     $\displaystyle
     \nsols \cdot \frac{ \eqs_j + \beta_j \, (\sum\nolimits_{i = 1}^A \alpha_i) + 1}{\eqs_j - \beta_j + 1}
     $.
   \item \textbf{Edge case}: The rank of the modules is
     $\displaystyle
     \nsols \cdot
     \frac{(1 + \sum\nolimits_{i = 1}^A \alpha_i) (\eqs_j + 1)}{\eqs_j-\beta_j+1} .
     $
   \item \textbf{Triangle case}: The rank is
     $\displaystyle
     \nsols \cdot 
       ( 1 + \sum\nolimits_{i=1}^A \alpha_i ) \,
       (
       1 +
       \frac{\beta_{j_1}}{\eqs_{j_1} - \beta_{j_1} + 1} + 
       \frac{\beta_{j_2}}{\eqs_{j_2}  - \beta_{j_2} + 1}
      ).
     $
   \end{itemize}
 \end{lemma}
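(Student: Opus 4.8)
The plan is to evaluate $\deg(\res)$ directly and then invoke \Cref{intro:thm:sizeVectorSpacesKoszulMap}: since the determinant of the complex $K_\bullet(\bm m)$ produced by \Cref{multilinearMixed:thm:caseUnmixedMixed} equals $\res$, the ranks of $K_0(\bm m)$ and $K_1(\bm m)$ both equal $\deg(\res)$, and this number depends only on the multidegrees, not on the chosen determinantal data. By \Cref{intro:thm:degRes}, $\deg(\res)=\sum_{k=0}^{N}\MHB(\bm d_0,\dots,\widehat{\bm d}_k,\dots,\bm d_N)$. The $k=0$ term is $\MHB(\bm d_1,\dots,\bm d_N)$, which is $\nsols$ by \Cref{multilinearMixed:thm:nsolsMixedUnmixed}. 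For $k\ge 1$, any two polynomials $f_k$ attached to the same block $\bm Y_j$ have equal multidegree and there are $\eqs_j$ of them, so the remaining $N$ terms collapse to $\sum_{j=1}^{B}\eqs_j\,M_j$, where $M_j$ denotes $\MHB$ of the square system obtained from $(f_0,f_1,\dots,f_N)$ by deleting one polynomial attached to $\bm Y_j$. Hence $\deg(\res)=\nsols+\sum_{j=1}^{B}\eqs_j\,M_j$, and it remains to compute $M_j$ in the four cases for $\bm d_0$.

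To compute $M_j$ I would use the \bezout product of \Cref{intro:thm:bezoutBound} with an auxiliary variable $T_i$ per block $\bm X_i$ and $S_l$ per block $\bm Y_l$: each square polynomial attached to $\bm Y_l$ contributes the factor $U+S_l$ with $U:=T_1+\cdots+T_A$, and $f_0$ contributes $L_0:=\sum_i d_{0,\bm X_i}\,T_i+\sum_l d_{0,\bm Y_l}\,S_l$. To read off the coefficient of $\prod_i T_i^{\alpha_i}\prod_l S_l^{\beta_l}$, extract $S_l^{c_l}$ from each $(U+S_l)^{e_l}$, where $e_j=\eqs_j-1$ (the block that lost an equation) and $e_l=\eqs_l$ otherwise, leaving $\binom{e_l}{c_l}U^{\,e_l-c_l}$; choosing one monomial of $L_0$ fixes all $c_l$, and a short degree count shows the surviving power of $U$ is always $a:=\sum_i\alpha_i$, contributing $\binom{a}{\alpha_1,\dots,\alpha_A}$. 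Thus $M_j$ is $\binom{a}{\alpha_1,\dots,\alpha_A}$ times a product of binomials in the $\eqs$'s, and dividing by $\nsols=\binom{a}{\alpha_1,\dots,\alpha_A}\prod_l\binom{\eqs_l}{\beta_l}$ and using $\binom{e}{\beta-1}/\binom{e}{\beta}=\tfrac{\beta}{e-\beta+1}$, $\binom{e-1}{\beta}/\binom{e}{\beta}=\tfrac{e-\beta}{e}$, and $\binom{e+1}{\beta}/\binom{e}{\beta}=\tfrac{e+1}{e-\beta+1}$ makes $M_j/\nsols$ an explicit rational function.

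Running the cases: for Center-Vertex ($L_0=U$), $M_j/\nsols=\tfrac{\eqs_j-\beta_j}{\eqs_j}$, so $\deg(\res)=\nsols\bigl(1+\sum_j(\eqs_j-\beta_j)\bigr)=\nsols(1+\sum_i\alpha_i)$ using $\sum_j\eqs_j=N$ and $\sum_j\beta_j=N-\sum_i\alpha_i$. For Edge ($L_0=U+S_{j_0}$), $M_{j_0}=\nsols$ while $M_j/\nsols=\tfrac{\eqs_{j_0}+1}{\eqs_{j_0}-\beta_{j_0}+1}\cdot\tfrac{\eqs_j-\beta_j}{\eqs_j}$ for $j\neq j_0$, and the sum (with $\sum_{j\neq j_0}(\eqs_j-\beta_j)=\sum_i\alpha_i-\eqs_{j_0}+\beta_{j_0}$) simplifies to $\nsols\,\tfrac{(1+\sum_i\alpha_i)(\eqs_{j_0}+1)}{\eqs_{j_0}-\beta_{j_0}+1}$. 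For Outer-Vertex ($L_0=S_{j_0}$), $M_{j_0}/\nsols=\tfrac{\beta_{j_0}}{\eqs_{j_0}}$ and $M_j/\nsols=\tfrac{\beta_{j_0}}{\eqs_{j_0}-\beta_{j_0}+1}\cdot\tfrac{\eqs_j-\beta_j}{\eqs_j}$ for $j\neq j_0$, and after clearing denominators the sum reduces to $\nsols\,\tfrac{\eqs_{j_0}+\beta_{j_0}\sum_i\alpha_i+1}{\eqs_{j_0}-\beta_{j_0}+1}$. For Triangle ($L_0=U+S_{j_1}+S_{j_2}$), the three monomials of $L_0$ give $M_j/\nsols=(\text{a factor from the deleted block})\cdot\bigl(1+\tfrac{\beta_{j_1}}{e_{j_1}-\beta_{j_1}+1}+\tfrac{\beta_{j_2}}{e_{j_2}-\beta_{j_2}+1}\bigr)$ with $e_{j_t}=\eqs_{j_t}$ unless $j=j_t$; summing over the subcases $j\notin\{j_1,j_2\}$, $j=j_1$, $j=j_2$ yields $\nsols(1+\sum_i\alpha_i)\bigl(1+\tfrac{\beta_{j_1}}{\eqs_{j_1}-\beta_{j_1}+1}+\tfrac{\beta_{j_2}}{\eqs_{j_2}-\beta_{j_2}+1}\bigr)$.

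I expect the Triangle case to be the hard part: $L_0$ has three summands, and when the deleted block is $\bm Y_{j_1}$ or $\bm Y_{j_2}$ the exponent $e_{j_t}$ drops by one, so the summand $1+\tfrac{\beta_{j_t}}{\eqs_{j_t}-1-\beta_{j_t}+1}$ collapses to $\tfrac{\eqs_{j_t}}{\eqs_{j_t}-\beta_{j_t}}$ and must be recombined carefully with the rest. After collecting terms, the whole identity reduces to the elementary fact that $(1+p+q)\bigl(1+\tfrac{b_1}{p+1}+\tfrac{b_2}{q+1}\bigr)=1+p+q+b_1+b_2+\tfrac{q\,b_1}{p+1}+\tfrac{p\,b_2}{q+1}$ with $p=\eqs_{j_1}-\beta_{j_1}$, $q=\eqs_{j_2}-\beta_{j_2}$, $b_t=\beta_{j_t}$. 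Finally, degenerate instances with $\eqs_j=\beta_j$ for some $j$ (where the deleted system is not zero-dimensional) need no separate argument: the relevant binomial coefficient, hence $M_j$ or $\nsols$, vanishes and the displayed expressions remain valid.
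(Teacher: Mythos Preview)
Your proposal is correct and follows essentially the same approach as the paper: both start from \Cref{intro:thm:sizeVectorSpacesKoszulMap} and \Cref{intro:thm:degRes}, group the $N$ summands for $k\ge 1$ by the block $\bm Y_j$ of the deleted polynomial into $\sum_j \eqs_j\, M_j$, and compute each $M_j$ by expanding the \bezout product monomial-by-monomial in the factor contributed by $f_0$. The only organizational difference is that the paper keeps $\bm d_0$ symbolic throughout---writing $M_j=\sum_s d_{0,\bm X_s}\theta_{j,s}^X+\sum_l d_{0,\bm Y_l}\theta_{j,l}^Y$, evaluating the $\theta$'s once, and obtaining the single closed form $\deg(\res)=\nsols\bigl(1+d_{0,\bm X_1}\sum_i\alpha_i+(1+\sum_i\alpha_i)\sum_t d_{0,\bm Y_t}\tfrac{\beta_t}{\eqs_t-\beta_t+1}\bigr)$ before specializing---whereas you specialize $\bm d_0$ to each of the four cases first and sum afterwards; the underlying computations and binomial identities are the same.
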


 The proof of this lemma can be found in \Cref{multilinearMixed:proof:sizeRes} and follows from a
 direct computation, see \Cref{intro:thm:degRes}.

\subsection{Example}
\label{multilinearMixed:example}
We follow the notation from the beginning of
\Cref{multilinearMixed:sec:mixedUmixed}.  Consider four blocks of variables
$\bm{X_1},\bm{X_2},\bm{Y_1},\bm{Y_2}$ that we partition to two sets:
$\{\bm{X_1},\bm{X_2}\}$, of cardinality $A = 2$, and
$\{\bm{Y_1},\bm{Y_2}\}$, of cardinality $B = 2$.  The number of
variables in the blocks of the first set are $\alpha = (1,1)$ and in
the second $\beta = (1,1)$. That is, we consider
{\small
$$  \bm{X_1} := \{X_{1,0}, X_{1,1} \}, 
  \bm{X_2} := \{X_{2,0}, X_{2,1}\}, \quad 
  \bm{Y_1} := \{Y_{1,0}, Y_{1,1}\},  
  \bm{Y_2} := \{Y_{2,0}, Y_{2,1}\}.
  $$
  }
Let $(f_1,\dots,f_4)$ be a square star multilinear system
corresponding to the following graph,
  \begin{center}
\begin{tikzpicture}[thick,
  fsnode/.style={draw,circle},
  ssnode/.style={draw,circle},
  every fit/.style={ellipse,draw,inner sep=-2pt,text width=2cm},
  -,shorten >= 3pt,shorten <= 3pt
]

\begin{scope}
  \node[fsnode] (x1) [label=left: ${\K[{\bm{X}_{1}},{\bm{X}_{2}}]}$] {};
\end{scope}

\begin{scope}[xshift=6cm,yshift=0.5cm,start chain=going below,node distance=5mm]
\foreach \j in {1,2}
  \node[ssnode,on chain] (y\j) [label=right: ${\K[\bm{Y}_{\j}]}$] {};
\end{scope}


\draw (x1) -- (y1) node [midway, above,sloped] {$\eqs_{1} = 2$};
\draw (x1) -- (y2) node [midway, below,sloped] {$\eqs_{2} = 2$};
\end{tikzpicture}
\end{center}
By \Cref{multilinearMixed:thm:nsolsMixedUnmixed}, the expected number of solutions of the system is $8$.
We introduce a polynomial $f_0$ and we consider the multiprojective
resultant of $\bm{f} := (f_0,f_1,\dots,f_N)$. By \Cref{multilinearMixed:thm:sizeRes}, the degree of the resultant,
depending on the choice of $f_0$, is as follows:
\begin{itemize}
\item In the Center-Vertex case, that is
  {\small $f_0 \in \K[\bm{X}_1,\bm{X}_2]_{\bm{1}}$}, the degree of the
  resultant is $24$.
\item In the Outer-Vertex case, that is
  $f_0 \in \K[\bm{Y}_j]_{\bm{1}}$ for $j \in [2]$,
  its degree is $20$.
\item In the Edge case, that is
  $f_0 \in \K[\bm{X}_1,\bm{X}_2,\bm{Y}_j]_{\bm{1}}$ for $j \in [2]$,
  its degree is $36$.
\item In the Triangle case, that
  is $f_0 \in \K[\bm{X}_1,\bm{X}_2,\bm{Y}_{1},\bm{Y}_{2}]_{\bm{1}}$,
  its degree is $48$.
\end{itemize}
We consider the Edge case, where $f_0 \in \K[\bm{X}_1,\bm{X}_2]_{\bm{1}}$, and the overdetermined system $\bm{f} = (f_0, f_1, f_2, f_3, f_4, f_5)$, where,
{\footnotesize
\begin{align*}
    \begin{array}{l l}
      f_0\!\! := \!\!\!\!\!\!&\left( a_{{1}} \, x_{2;{0}}\!+\!a_{{2}} \, x_{2;{1}} \right) x_{1;{0}}\!+\! \left( a
      _{{3}} \, x_{2;{0}}\!+\!a_{{4}} \, x_{2;{1}} \right) x_{1;{1}}  \\[7pt]
      f_1\!\! := \!\!\!\!\!\!&
 \left(  \left( b_{
{1}} \, y_{1;{0}}\!+\!b_{{2}} \, y_{1;{1}} \right) x_{2;{0}}\!+\! \left( b_{{3}} \, y_{1;{0
}}\!+\!b_{{4}} \, y_{1;{1}} \right) x_{2;{1}} \right) x_{1;{0}}   \!+\! \left(
 \left( b_{{5}} \, y_{1;{0}}\!+\!b_{{6}} \, y_{1;{1}} \right) x_{2;{0}}\!+\! \left( b_
{{7}} \, y_{1;{0}}\!+\!b_{{8}} \, y_{1;{1}} \right) x_{2;{1}} \right) x_{1;{1}}  \\[7pt]
      f_2\!\! := \!\!\!\!\!\!&
 \left(  \left( c_{{1}} \, y_{1;{0}}\!+\!c_{{2}} \, y_{1;{1}} \right) x_{2;{0}}\!+\!
 \left( c_{{3}} \, y_{1;{0}}\!+\!c_{{4}} \, y_{1;{1}} \right) x_{2;{1}} \right) x_
{1;{0}}  \!+\!  \left(  \left( c_{{5}} \, y_{1;{0}}\!+\!c_{{6}} \, y_{1;{1}} \right) x_{2
;{0}}\!+\! \left( c_{{7}} \, y_{1;{0}}\!+\!c_{{8}} \, y_{1;{1}} \right) x_{2;{1}}
 \right) x_{1;{1}} \\[7pt]
      f_3\!\! := \!\!\!\!\!\!&
 \left(  \left( d_{{1}} \, y_{2;{0}}\!+\!d_{{2}} \, y_{2;{1}}
 \right) x_{2;{0}}\!+\! \left( d_{{3}} \, y_{2;{0}}\!+\!d_{{4}} \, y_{2;{1}} \right) x
_{2;{1}} \right) x_{1;{0}}  \!+\! \left(  \left( d_{{5}} \, y_{2;{0}}\!+\!d_{{6}} \, y_{
2;{1}} \right) x_{2;{0}}\!+\! \left( d_{{7}} \, y_{2;{0}}\!+\!d_{{8}} \, y_{2;{1}}
 \right) x_{2;{1}} \right) x_{1;{1}} \\[7pt]
      f_4\!\! := \!\!\!\!\!\!&
      \left(  \left( e_{{1}} \, y_{2;{0}}\!+\!
e_{{2}} \, y_{2;{1}} \right) x_{2;{0}}\!+\! \left( e_{{3}} \, y_{2;{0}}\!+\!e_{{4}} \, y_{
2;{1}} \right) x_{2;{1}} \right) x_{1;{0}}  \!+\! \left(  \left( e_{{5}} \, y_{2
;{0}}\!+\!e_{{6}} \, y_{2;{1}} \right) x_{2;{0}}\!+\! \left( e_{{7}} \, y_{2;{0}}\!+\!e_{{
8}} \, y_{2;{1}} \right) x_{2;{1}} \right) x_{1;{1}} 
    \end{array}
\end{align*}
}

\noindent
We consider the \emph{determinantal data} $(\{1\},\{2\},1)$ and the
identity map $i \mapsto i$. Then, the degree vector of
\Cref{multilinearMixed:thm:caseUnmixedMixed} is
$\bm{m} = (0, 3, 1, -1)$ and the vector spaces of the Weyman complex
$K(\bm{m}, \bm{f})$ become
{\footnotesize
\begin{align*}
  K_1(\bm{m}, \bm{f}) = & \quad\,\,
                          S^*_{\bm{X}_1}(-1) \otimes S_{\bm{X}_2}(0) \otimes S_{\bm{Y}_1}(0) \otimes S^*_{\bm{Y}_2}(0) \otimes
                          \left\{
                          \begin{array}{l}
                          (e_{0} \wedge e_{1} \wedge e_{3})  \oplus 
                          (e_{0} \wedge e_{1} \wedge e_{4}) \; \oplus \\
                          (e_{0} \wedge e_{2} \wedge e_{3})  \oplus 
                          (e_{0} \wedge e_{2} \wedge e_{4})
                          \end{array}
  \right\} \\
                        &
                          \oplus \; S^*_{\bm{X}_1}(-1) \otimes S_{\bm{X}_2}(0) \otimes S_{\bm{Y}_1}(0) \otimes S^*_{\bm{Y}_2}(-1) \otimes
                          \left\{
                          \begin{array}{l}
                            (e_{1} \wedge e_{3} \wedge e_{4})  \oplus 
                            (e_{2} \wedge e_{3} \wedge e_{4})
                          \end{array}
  \right\} \\
                        &
                          \oplus \; S^*_{\bm{X}_1}(-1) \otimes S_{\bm{X}_2}(0) \otimes S_{\bm{Y}_1}(1) \otimes S^*_{\bm{Y}_2}(-1) \otimes
                          \left\{
                          \begin{array}{l}
                          (e_{0} \wedge e_{3} \wedge e_{4}) 
                          \end{array}
                          \right\}  ,    \\
  K_0(\bm{m}, \bm{f}) = & \quad\,\,
                          S^*_{\bm{X}_1}(0) \otimes S_{\bm{X}_2}(1) \otimes S_{\bm{Y}_1}(0) \otimes S^*_{\bm{Y}_2}(0) \otimes \left\{
                          \begin{array}{l}
                          (e_{1} \wedge e_{3}) \oplus (e_{1} \wedge e_{4}) \; \oplus \\ (e_{2} \wedge e_{3}) \oplus (e_{2} \wedge e_{4})
                          \end{array}
                          \right\} \\
                        &
                          \oplus \; S^*_{\bm{X}_1}(0) \otimes S_{\bm{X}_2}(1) \otimes S_{\bm{Y}_1}(1) \otimes S^*_{\bm{Y}_2}(0) \otimes \left\{
                          \begin{array}{l}
                            (e_{0} \wedge e_{3}) \oplus (e_{0} \wedge e_{4})
                          \end{array}
                          \right\} \\
                        &
                          \oplus \; S^*_{\bm{X}_1}(0) \otimes S_{\bm{X}_2}(1) \otimes S_{\bm{Y}_1}(1) \otimes S^*_{\bm{Y}_2}(-1) \otimes \left\{
                          \begin{array}{l}
                            (e_{3} \wedge e_{4})
                          \end{array}
                          \right\} .
\end{align*}
}
The Koszul determinantal matrix representing the map
$\delta_1(\bm{m}, \bm{f})$ between the modules \wrt the monomial basis
is
{\scriptsize
      \begin{align*}
      \arraycolsep=1.5pt
 \left[ \begin {array}{cccccccccccccccccccccccc} a_{{1}}&a_{{2}}&&&
&&&&-b_{{1}}&-b_{{2}}&-b_{{3}}&-b_{{4}}&&&&&&&&&&&& \\
 a_{{3}}&a_{{4}}&&&&&&&-b_{{5}}&-b_{{6}}&-b_ {{7}}&-b_{{8}}&&&&&&&&&&&&\\
 &&a_{{1}} &a_{{2}}&&&&&&&&&-b_{{1}}&-b_{{2}}&-b_{{3}}&-b_{{4}}&&&&& &&&\\
 &&a_{{3}}&a_{{4}}&&&&&&&&&-b_{{5} }&-b_{{6}}&-b_{{7}}&-b_{{8}}&&&&&&&&\\
 && &&a_{{1}}&a_{{2}}&&&-c_{{1}}&-c_{{2}}&-c_{{3}}&-c_{{4}}&&&&&& &&&&&&\\
 &&&&a_{{3}}&a_{{4}}&&&-c_{{5}}& -c_{{6}}&-c_{{7}}&-c_{{8}}&&&&&&&&&&&& \\
 &&&&&&a_{{1}}&a_{{2}}&&&&&-c_{{1}}&-c_{ {2}}&-c_{{3}}&-c_{{4}}&&&&&&&&\\
 &&&&& &a_{{3}}&a_{{4}}&&&&&-c_{{5}}&-c_{{6}}&-c_{{7}}&-c_{{8}}&&&&& &&&\\
 e_{{1}}&e_{{3}}&-d_{{1}}&-d_{{3}}&&&&& &&&&&&&&b_{{1}}&b_{{2}}&b_{{3}}&b_{{4}}&&&& \\
 e _{{5}}&e_{{7}}&-d_{{5}}&-d_{{7}}&&&&&&&&&&&&&b_{{5}}&b_{{6 }}&b_{{7}}&b_{{8}}&&&&\\
 e_{{2}}&e_{{4}}&-d_{{2}}&-d_{{4}}&&&&&&&& &&&&&&&&&b_{{1}}&b_{{2}}&b_{{3}}&b_{{4}}\\
 e_{{6}}&e_{{8}}&-d_{{6} }&-d_{{8}}&&&&&&&&&&&&&&&&&b_{{5}}&b_{{6}}&b_{{7}}&b_{ {8}}\\
 &&&&e_{{1}}&e_{{3}}&-d_{{1}}&-d_{{3}}&& &&&&&&&c_{{1}}&c_{{2}}&c_{{3}}&c_{{4}}&&&& \\
  &&&&e_{{5}}&e_{{7}}&-d_{{5}}&-d_{{7}}&&&&&&&&&c_{{5}}&c_{{6 }}&c_{{7}}&c_{{8}}&&&&\\
 &&&&e_{{2}}&e_{{4}}&-d_{{2}}&-d_{{4}}&&&& &&&&&&&&&c_{{1}}&c_{{2}}&c_{{3}}&c_{{4}}\\
 &&&&e_{{6}}&e_{{8}} &-d_{{6}}&-d_{{8}}&&&&&&&&&&&&&c_{{5}}&c_{{6}}&c_{{7}}&c_{ {8}}\\
 &&&&&&&&e_{{1}}&&e_{{3}}&&-d_{{1}}& &-d_{{3}}&&a_{{1}}&&a_{{2}}&&&&&\\
 &&&&&&&&e_{{5}}&&e_{{7}}&&-d_{{5} }&&-d_{{7}}&&a_{{3}}&&a_{{4}}&&&&&\\
 &&&&& &&&e_{{2}}&&e_{{4}}&&-d_{{2}}&&-d_{{4}}&&&&&&a_{{1}}&&a_{{2 }}&\\
 &&& &&&&&e_{{6}}&&e_{{8}}&&-d_{{6}}&&-d_{{8}}&&&&&&a_{{3}}&&a _{{4}}&\\
 &&&&&&&&&e_{{1}}&&e_{{3}}&&-d_{{1} }&&-d_{{3}}&&a_{{1}}&&a_{{2}}&&&&\\
 &&&&&&&&&e_{{5}}&&e_{{7}}&&-d_ {{5}}&&-d_{{7}}&&a_{{3}}&&a_{{4}}&&&&\\
 &&&& &&&&&e_{{2}}&&e_{{4}}&&-d_{{2}}&&-d_{{4}}&&&&&&a_{{1}}&&a _{{2}}\\
 && &&&&&&&e_{{6}}&&e_{{8}}&&-d_{{6}}&&-d_{{8}}&&&&&&a_{{3}} &&a_{{4}}\end {array} \right] . 
        \end{align*}
}


 \section{Determinantal formulas for bipartite bilinear system}
\label{sec:bipartiteBilinear}

In this section, we define the \emph{bipartite bilinear systems} and
we construct determinantal formulas for two different kinds of
overdetermined multihomogeneous systems related to them. These
formulas are Koszul-type determinantal formulas.
This section follow the same notation as
\Cref{multilinearMixed:sec:mixedUmixed}.

\begin{definition}[Bipartite bilinear system]
  \label{def:bipartiteBilinear}
    A square multihomogeneous system $\bm{f} = (f_1,\dots,f_N)$ in
  $\K[\bm{\bar{X},\bar{Y}}]$ with multidegrees
  ${\bm{d}_1},\dots,{\bm{d}_N} \in \Z^{q}$ is a
  \emph{bipartite bilinear system} if for every $k \in [N]$, there are
  $i_k \in [A]$ and $j_k \in [B]$ such that
  $f_k \in \K[{\bm{X}_{i_k}},{\bm{Y}_{j_k}}]_{\bm{1}}.$
  For each $i \in [A]$ and $j \in [B]$,
  let  $\eqs_{i,j}$ be the number of polynomials of $\bm{f}$ in
  $\K[{\bm{X}_i},{\bm{Y}_{\!j}}]_{\bm{1}}.$
\end{definition}

We use the term \emph{bipartite} because we can represent such systems
using a bipartite graph.  The vertices of the graph are the algebras
${\K[\bm{X}_{1}]},\dots,{\K[\bm{X}_{A}]}$,
${\K[\bm{Y}_{1}]},\dots,{\K[\bm{Y}_{B}]}$.  For each ${\bm{d}_k}$
there is an edge between the vertices ${\K[\bm{X}_{i}]}$ and
${\K[\bm{Y}_{j}]}$ whenever $d_{k,X_i} = d_{k,Y_j} = 1$.
That is, when it holds
$f_k \in \K[\bm{X}_{i},\bm{Y}_{j}]_{\bm{1}}$.
The graph is
bipartite because we can partition the vertices to two sets,
$\{{\K[\bm{X}_{1}]},\dots,{\K[\bm{X}_{A}]}\}$ and
$\{{\K[\bm{Y}_{1}]},\dots,{\K[\bm{Y}_{B}]}\}$ such that there is no
edge between vertices belonging to the same set.

\begin{example}
  \label{ex:bipartiteBilinear}  
  Let ${\bm{X}_1},{\bm{X}_2},{\bm{Y}_1},{\bm{Y}_2},{\bm{Y}_3}$ be five
  blocks of variables. Consider the multihomogeneous system
  $(f_1,f_2,f_3,f_4) \subset \K[\bm{\bar{X},\bar{Y}}]$ with
  multidegrees
    \setlength{\columnseprule}{.5pt}
  \def\columnseprulecolor{\color{black}}
  \begin{multicols}{2}
    {  
    \small
  \begin{align*}
    \begin{array}{r c c | c c c c c}
      (\!\!\!\! & d_{i,X_1}, & d_{i,X_2}, & d_{i,Y_1}, & d_{i,Y_2}, & d_{i,Y_3} & \!\!\!\!)\\ 
    {\bm{d}_1} = (\!\!\!\! & 1, & 0, & 1, & 0, & 0 & \!\!\!\!) \\
    {\bm{d}_2} = (\!\!\!\! & 1, & 0, & 0, & 1, & 0 & \!\!\!\!) \\
    {\bm{d}_3} = (\!\!\!\! & 0, & 1, & 0, & 1, & 0 & \!\!\!\!) \\
    {\bm{d}_4} = (\!\!\!\! & 0, & 1, & 0, & 0, & 1 & \!\!\!\!)
  \end{array}
  \end{align*}
  }
  \columnbreak
    \begin{center}
\begin{tikzpicture}[thick,scale=0.75, every node/.style={scale=0.75},
  fsnode/.style={draw,circle},
  ssnode/.style={draw,circle},
  every fit/.style={ellipse,draw,inner sep=-2pt,text width=2cm},
  -,shorten >= 3pt,shorten <= 3pt
]

\begin{scope}[start chain=going below,node distance=10mm]
\foreach \i in {1,2}
  \node[fsnode,on chain] (x\i) [label=left: ${\K[\bm{X}_{\i}]}$] {};
\end{scope}

\begin{scope}[xshift=6cm,yshift=0.75cm,start chain=going below,node distance=10mm]
\foreach \j in {1,2,3}
  \node[ssnode,on chain] (y\j) [label=right: ${\K[\bm{Y}_{\j}]}$] {};
\end{scope}


\draw (x1) -- (y1) node [midway, above,sloped] {$\eqs_{1,1} = 1$};
\draw (x1) -- (y2) node [pos=0.45, above,sloped] {$\eqs_{1,2} = 1$};
\draw (x2) -- (y2) node [pos=0.55, above,sloped] {$\eqs_{2,2} = 1$};
\draw (x2) -- (y3) node [midway, above,sloped] {$\eqs_{2,3} = 1$};

\draw[dashed] (x1) -- (y3) node [pos=0.8, above,sloped] {$\eqs_{1,3} = 0$};
\draw[dashed] (x2) -- (y1) node [pos=0.8, below,sloped] {$\eqs_{2,1} = 0$};
\end{tikzpicture}
\end{center}
\end{multicols}
\noindent
This system is a bipartite bilinear system where $\eqs_{1,1} = 1$,
  $\eqs_{1,2} = 1$, $\eqs_{1,3} = 0$, $\eqs_{2,1} = 0$,
  $\eqs_{2,2} = 1$ and $\eqs_{2,3} = 1$.
  The corresponding bipartite graph is the one above.

\end{example}

\begin{remark} \label{intro:thm:sumOfVariablesBipartite}
    For each square bipartite bilinear system, it holds
  $N = \sum_{i=1}^A \sum_{j=1}^B \eqs_{i,j}$.
  Moreover, if the system has a nonzero finite number of solutions,
  then for $i \in \{1,\dots,A\}$, it holds
  $\sum_{j = 1}^B \eqs_{i,j} \geq \alpha_i$ and for each
  $j \in \{1,\dots,B\}$ it holds
  $\sum_{i = 1}^A \eqs_{i,j} \geq \beta_j$, see
  \Cref{intro:thm:bezoutBound}.
\end{remark}

As we did in \Cref{multilinearMixed:sec:mixedUmixed},
we study overdetermined polynomial systems $(f_0,f_1,\dots,f_N)$ in
$\K[\bm{\bar{X},\bar{Y}}]$ where $(f_1,\dots,f_N)$ is a square
bipartite bilinear system and $f_0$ is a multilinear polynomial.
We consider different types of polynomials $f_0$. The obvious choice
for $f_0$ is to have the same structure as one of the polynomials
$f_1,\dots,f_N$; still we also choose $f_0$ to have a different
support. This leads to resultants of smaller degrees and so to
matrices of smaller size. The following $f_0$ lead to determinantal
formulas:
 \begin{enumerate}
   \item $f_0 \in \K[\bm{X}_i]_{\bm{1}}$, for any $i \in \{1,\dots,A\}$.
   \item $f_0 \in \K[\bm{Y}_j]_{\bm{1}}$, for any $j \in \{1,\dots,B\}$.
   \item $f_0 \in \K[\bm{X}_i,\bm{Y}_{j}]_{\bm{1}}$, for any
     $i \in \{1,\dots,A\}$ and $j \in \{1,\dots,B\}$.
   \end{enumerate}
 
  \begin{theorem} \label{bipartite:thm:caseBipartite}
    Consider a generic overdetermined system
    $\bm{F} = (F_0,\dots,F_N)$ in 
    $\Z[\bm{u}][\bm{\bar{X},\bar{Y}}]$ of multidegrees
    ${\bm{d}_0},\dots,{\bm{d}_N}$ (\Cref{intro:def:genMultiHom}),
    where $(F_1,\dots,F_n)$ is a square bipartite bilinear
    system. Assume that for each $i \in \{1,\dots,A\}$,
    $\sum_{j = 1}^B \eqs_{i,j} \geq \alpha_i$ and for each
    $j \in \{1,\dots,B\}$, $\sum_{i = 1}^A \eqs_{i,j} \geq \beta_j$
    (see \Cref{intro:thm:sumOfVariablesBipartite}), and $F_0$ is a
    multilinear polynomial as detailed in the paragraph above of
    multidegree $\bm{d}_0$.

    The degree vector
    $\bm{m} =
    (m_{\bm{X}_{1}},\dots,m_{\bm{X}_{A}},m_{\bm{Y}_{1}},\dots,m_{\bm{Y}_{B}})$ define by
    \begin{align*}
      \left\{
      \begin{array}{lcl}
        m_{\bm{X}_{i}} = \sum_{j = 1}^B \eqs_{i,j} - \alpha_i + d_{0,\bm{X}_i} && \text{for } 1 \leq i \leq A \\
        m_{\bm{Y}_{j}} = -1 && \text{for } 1 \leq j \leq B
      \end{array}\right.
    \end{align*}
    corresponds to a Koszul-type determinantal
    formula (\Cref{def:KoszulDetFormula})
   $$K_\bullet(\bm{m}) : 0 \rightarrow
   K_{1,\sum_{j = 1}^B \beta_j+1}(\bm{m}) \xrightarrow{\delta_1(\bm{m})}
    K_{0,\sum_{j = 1}^B \beta_j}(\bm{m})
    \rightarrow 0.$$
 \end{theorem}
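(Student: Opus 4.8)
The plan is to mirror the proof of \Cref{multilinearMixed:thm:caseUnmixedMixed}. First I would group the generic polynomials $F_1,\dots,F_N$ according to their bilinear support: for a subset $I\subseteq\{0,\dots,N\}$ with $\#I=p$, set $s_0=1$ if $0\in I$ and $s_0=0$ otherwise, and let $s_{i,j}$ be the number of polynomials of $I\setminus\{0\}$ lying in $\Z[\bm u][\bm X_i,\bm Y_j]_{\bm 1}$; then $\sum_{k\in I}\bm d_k$ has $\bm X_i$-component $\sum_{j=1}^B s_{i,j}+s_0\,d_{0,\bm X_i}$ and $\bm Y_j$-component $\sum_{i=1}^A s_{i,j}+s_0\,d_{0,\bm Y_j}$, and $p=s_0+\sum_{i,j}s_{i,j}$. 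Exactly as in \Cref{multilinearMixed:thm:rewriteKvp}, collecting the subsets $I$ with a common datum $(s_0,(s_{i,j}))$ rewrites $K_{v,p}(\bm m)$ as a direct sum over such data of $H^{p-v}_{\P}(\bm m-\sum_{k\in I}\bm d_k)$ tensored with an exterior-algebra piece that is nonzero precisely when $0\le s_0\le1$ and $0\le s_{i,j}\le\eqs_{i,j}$ for all $i,j$; the Künneth formula (\Cref{intro:thm:kunnethFormula}) then splits the cohomology into a tensor product of cohomologies of the projective spaces $\Pr^{\alpha_1},\dots,\Pr^{\alpha_A},\Pr^{\beta_1},\dots,\Pr^{\beta_B}$.

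Next I would analyze, block by block, when these factors are nonzero via \Cref{intro:thm:bottFormula,intro:thm:possiblesRi}. For a block $\bm Y_j$ the twist is $m_{\bm Y_j}-\sum_i s_{i,j}-s_0\,d_{0,\bm Y_j}=-1-\sum_i s_{i,j}-s_0\,d_{0,\bm Y_j}<0$, so the factor is nonzero iff $r_{\bm Y_j}=\beta_j$ and $\sum_i s_{i,j}+s_0\,d_{0,\bm Y_j}\ge\beta_j$. For a block $\bm X_i$ the twist is $m_{\bm X_i}-\sum_j s_{i,j}-s_0\,d_{0,\bm X_i}=\big(\sum_j\eqs_{i,j}-\sum_j s_{i,j}\big)-\alpha_i+(1-s_0)\,d_{0,\bm X_i}$, which is $\ge-\alpha_i>-\alpha_i-1$, so the factor is nonzero iff $r_{\bm X_i}=0$ and $\sum_j s_{i,j}+(s_0-1)\,d_{0,\bm X_i}\le\sum_j\eqs_{i,j}-\alpha_i$. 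Hence whenever $K_{v,p}(\bm m)\neq0$ we have $r_{\bm X_i}=0$ for all $i$ and $r_{\bm Y_j}=\beta_j$ for all $j$, so $p-v=\sum_{j=1}^B\beta_j$ and therefore $v=s_0+\sum_{i,j}s_{i,j}-\sum_{j}\beta_j$.

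It then remains to bound $v$. Summing the $\bm Y_j$ inequalities over $j$ gives $\sum_{i,j}s_{i,j}+s_0\sum_j d_{0,\bm Y_j}\ge\sum_j\beta_j$; since $\sum_j d_{0,\bm Y_j}\le1$ for each of the three admissible supports of $F_0$, this forces $v\ge0$. Summing the $\bm X_i$ inequalities over $i$ gives $\sum_{i,j}s_{i,j}+(s_0-1)\sum_i d_{0,\bm X_i}\le\sum_{i,j}\eqs_{i,j}-\sum_i\alpha_i$; using that $(F_1,\dots,F_N)$ is square, so $N=\sum_{i,j}\eqs_{i,j}=\sum_i\alpha_i+\sum_j\beta_j$ (\Cref{intro:thm:sumOfVariablesBipartite}), together with $\sum_i d_{0,\bm X_i}\le1$, this gives $\sum_{i,j}s_{i,j}\le\sum_j\beta_j+1-s_0$, hence $v\le1$. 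Thus the only nonzero terms of $K_\bullet(\bm m)$ are $K_{1,\sum_j\beta_j+1}(\bm m)$ (from $v=1$) and $K_{0,\sum_j\beta_j}(\bm m)$ (from $v=0$); both are nonzero because the determinant of the complex is the nonvanishing polynomial $\res(\bm F)$ (\Cref{intro:res:thm:weymanComplex}), and by \Cref{intro:def:KoszulMap,def:KoszulDetFormula} the differential $\delta_1(\bm m)=\sum_{k=0}^N\mu_{F_k}\otimes\inDev_k$ is a Koszul-type determinantal formula, whose matrix has $\text{degree}(\res(\bm F))$ rows and columns by \Cref{intro:thm:sizeVectorSpacesKoszulMap}.

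The main obstacle is the bookkeeping of the first step — setting up the index data $(s_0,(s_{i,j}))$ and checking that the exterior-algebra pieces are nonzero exactly under $0\le s_{i,j}\le\eqs_{i,j}$ — but this case is in fact simpler than the star case: there is no permutation $\sigma$ and no threshold $c$, and the blocks $\bm X_i$ decouple completely (each $m_{\bm X_i}$ already pins $r_{\bm X_i}=0$ on its own), so no induction over the $\bm X$-blocks is required. The only genuinely case-dependent inputs are the elementary bounds $\sum_j d_{0,\bm Y_j}\le1$ and $\sum_i d_{0,\bm X_i}\le1$, which hold for each of the three admissible choices of $F_0$ listed before the theorem.
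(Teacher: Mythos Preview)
Your proposal is correct and follows essentially the same approach as the paper's own proof: the same index data $(s_0,(s_{i,j}))$, the same Case~X/Case~Y analysis via \Cref{intro:thm:possiblesRi}, and the same summed inequalities (using $\sum_j d_{0,\bm Y_j}\le 1$, $\sum_i d_{0,\bm X_i}\le 1$, and $N=\sum_{i,j}\eqs_{i,j}=\sum_i\alpha_i+\sum_j\beta_j$) to pin $v\in\{0,1\}$. Your observation that the $\bm X$-blocks decouple here, so no induction over them is needed, is exactly the simplification the paper alludes to when it says this proof is ``similar to'' but briefer than that of \Cref{multilinearMixed:thm:caseUnmixedMixed}.
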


 \noindent For the sake of brevity, we present the proof of
 \Cref{bipartite:thm:caseBipartite} in \Cref{proof:bipartiteBilinear}
 as it is similar to the one of
 \Cref{multilinearMixed:thm:caseUnmixedMixed}.
 Additionally, we present an example of the determinantal formulas
 constructed in this section in \Cref{bipartite:example}.

\section{Solving the Multiparameter Eigenvalue Problem}
\label{sec:ex-MEP}

We present an algorithm (and an example) for solving a nonsingular MEP.
The polynomial system associated to MEP, see \eqref{eq:bilinearMEP}, corresponds to a \textit{star multilinear system} (\Cref{def:multilinearMixed}), where
$A = 1$, $B = \alpha$ and
$\eqs_j = \beta_j + 1$, for each $j \in [B]$.
In particular, following (\ref{eq:bilinearMEP}), the system is
$\bm{f}^{\mathrm{MEP}} := (f_{1,0},\dots,f_{1,\beta_1},\dots,f_{\alpha,0},\dots,f_{\alpha,\beta_\alpha})$, where
$f_{i,j} \in \K[\bm{Y}_1,\dots,\bm{Y}_{i-1},\bm{Y}_{i+1},\dots,\bm{Y}_\alpha]_{\bm{1}}$, for $i \in [\alpha]$ and $j+1 \in [\beta_i+1]$.
The expected number of solutions is
$\prod_{j = 1}^\alpha (\beta_j + 1)$,
see~\Cref{multilinearMixed:thm:nsolsMixedUnmixed}.

We introduce a linear form $f_0 \in \K[\bm{X}]_{\bm{1}}$ and consider the
Sylvester-type determinantal formula of \Cref{multilinearMixed:thm:sylvType}.
The map $\delta$ associated to this formula is as follows,
 \begin{align}\label{eq:detMapMEP}
   \begin{array}{r c}
     \delta : 
                K[\bm{Y}_1,\dots,\bm{Y}_\alpha]_{\bm{1}}
                \times \prod_{i=1}^\alpha
                \K[\bm{Y}_1 \dots \bm{Y}_{i-1},\bm{Y}_{i+1} \dots \bm{Y}_\alpha]_{\bm{1}} \rightarrow
     &
       \K[\bm{X},\bm{Y}_1,\dots,\bm{Y}_\alpha]_{\bm{1}}
     \\
               (g_0,g_{1,0},\dots,g_{1,\beta_1},\dots,g_{\alpha,0},\dots,g_{\alpha,\beta_\alpha})  \mapsto &
                                                                                                              g_0 \, f_0 + \sum_{i = 1}^\alpha \sum_{j=0}^{\beta_i} g_{i,j} f_{i,j} .
   \end{array}
   \nonumber
   \\[-15pt]
\end{align}

We fix a monomial basis for the domain and codomain of $\delta$ and we
construct a matrix $C$ associated to it.
We arrange the rows
and columns of $C$ so that we can write  it as 
$\left[ \begin{smallmatrix} C_{1,1} & C_{1,2} \\ C_{2,1} & C_{2,2}
  \end{smallmatrix} \right] $, such that
\begin{itemize}[leftmargin=*]
\item The submatrix $\left[ \begin{smallmatrix} C_{2,1} & C_{2,2}
    \end{smallmatrix} \right] $ corresponds to the rows
  $\bm{Y}^\theta \, f_0$, for 
  $\bm{Y}^\theta \in \K[\bm{Y}_1,\dots,\bm{Y}_\alpha]_{\bm{1}}$.
\item The submatrix $\left[ \begin{smallmatrix}  C_{1,2} \\ C_{2,2}
    \end{smallmatrix} \right] $ corresponds to the column associated
  to the monomial $\bm{Y}^\theta \, X_0$.
\item If the $k$-th row of $C$ corresponds to $\bm{Y}^\theta \, f_0$,
  then the $k$-th column corresponds to the monomial
  $\bm{Y}^\theta \, x_0$.
\end{itemize}
We say that a \textsc{MEP} is \emph{affine} if $\bm{f}^{\mathrm{MEP}}$
is a zero-dimensional system and for every solution the
$x_0$-coordinate is not zero. When $\bm{f}^{\mathrm{MEP}}$ has a
finite number of solutions, we can always assume that it is
\emph{affine} by performing a structured linear change of coordinates.
When the \textsc{MEP} is affine, by
\cite[Prop.~4.5]{bender_bilinear_2018}, the matrix $C_{1,1}$ is
invertible. Moreover, by \cite[Lem.~4.4]{bender_bilinear_2018}, we
have a one to one correspondence between the eigenvalues of the MEP
and the (classical) eigenvalues of the Schur complement of $C_{2,2}$,
$\widetilde{C_{2,2}} := C_{2,2} - C_{2,1} C^{-1}_{1,1} C_{1,2}$ : each
eigenvalue of $\widetilde{C_{2,2}}$ is the evaluation of
$\frac{f_0}{x_0}$ at an eigenvalue of the original MEP.
Also, the right eigenspaces of $\widetilde{C_{2,2}}$ correspond to the
vector of monomials $x_0 \, \bm{Y}^\alpha$, for
$\bm{Y}^\theta  \in \K[\bm{Y}_1,\dots,\bm{Y}_\alpha]_{\bm{1}}$,
evaluated at each solution of
$(f_{1,0},\dots,f_{1,\beta_1},\dots,f_{\alpha,0},\dots,f_{\alpha,\beta_\alpha})$
\cite[Lem.~5.1]{emiris1996complexity}.
Even more, if we multiply these
eigenvectors by
$\bigl[\begin{smallmatrix} M_{1,1}^{-1} \cdot M_{2,1} \\
  I \end{smallmatrix}\bigr]$, then we recover a vector
corresponding to the evaluation of every monomial in
$\K[\bm{X},\bm{Y}_1,\dots,\bm{Y}_\alpha]_{\bm{1}}$ evaluated at the
original solution. This information suffices to recover all the
coordinates of the solution.

\begin{remark}[Multiplication map]
  For an affine \textsc{MEP}, the matrix $\widetilde{C_{2,2}}$
  corresponds to the multiplication map of
   the rational function
  $\frac{f_0}{x_0}$ in the quotient ring
  $\K[\bm{X},\bm{Y}_1,\dots,\bm{Y}_\alpha] / \bm{f}^{\mathrm{MEP}}$ at
  multidegree $(0,1,\dots,1) \in \N^{\alpha+1}$, \wrt the monomial
  basis
  $\{\bm{Y}^\theta\}_{\bm{Y}^\theta \in
    \K[\bm{Y}_1,\dots,\bm{Y}_\alpha]_{\bm{1}}}$.
\end{remark}

{\setlength{\textfloatsep}{5pt}
  \small
    \begin{algorithm}
      \caption{$\texttt{SolveMEP}(\{\{M^{(i,j)}\}_{j \in [\beta_i + 1]}\}_{i \in [\alpha]})$}
      \begin{algorithmic}[1]
        \label{alg:solveMEP}
        \REQUIRE
        {Affine \textsc{MEP} $\{\{M^{(i,j)}\}_{j \in [\beta_i + 1]}\}_{i \in [\alpha]}$}

        \STATE $(f_{1,0},\dots,f_{\alpha,\beta_\alpha}) \leftarrow$
          Multilinear system associated to
          $\{\{M^{(i,j)}\}_{j \in [\beta_i + 1]}\}_{i \in [\alpha]}$
          (Eq.~\ref{eq:bilinearMEP}).

          \STATE $f_0 \leftarrow$ Generic linear polynomial in
            $\K[\bm{X}]_1$.

          \STATE
            $ \bigl[\begin{smallmatrix}C_{1,1} & C_{1,2} \\ C_{2,1} &
              C_{2,2} \end{smallmatrix} \bigr]$ $\leftarrow$ Matrix
            corresponding to $\delta$ (Eq.~\ref{eq:detMapMEP});
            partitioned in four block

      \STATE $\left\{\left(\frac{f_0}{x_0}(p), \bar{v}_p\right)\right\}_{p} \leftarrow$ 
              Set of
              Eigenvalue-Eigenvector of the Schur compl. of $C_{2,2}$.

        \FORALL{$\left(\frac{f_0}{x_0}(p), \bar{v}_p\right) \in \left\{\left(\frac{f_0}{x_0}(p), \bar{v}_p\right)\right\}_{p}$}

        \STATE Extract the coordinates of $p$ from  $\bigl[\begin{smallmatrix} C_{1,1}^{-1} \cdot C_{2,1} \\
            I \end{smallmatrix}\bigr] \cdot \bar{v}_p$. 
        \ENDFOR
      \end{algorithmic}
       \end{algorithm}
    }

    \begin{remark}
      [Atkinson's Delta Method]
      \label{rem:comparison-w-Atkinson}
      By inspecting the eigenvalues and eigenvectors of the Schur
      complement of $C_{2,2}$ for $f_0 = \frac{x_i}{x_0}$, we can
      conclude that $\widetilde{C_{2,2}}$ equals the matrix
      $\Delta^{-1}_0 \, \Delta_i$ from Atkinson's Delta method, see
      \cite[Chp.~6]{atkinson_multiparameter_1972}. It worth to
      point out that our construction of $\widetilde{C_{2,2}}$
      improves Atkinson's construction of $\Delta^{-1}_0 \, \Delta_i$
      as we avoid the symbolic expansion by minors that he considers
      \cite[Eq.~6.4.4]{atkinson_multiparameter_1972}.
      The dimension of the matrix $C_{2,2}$, and so the dimension of
      Atkinson's Delta matrices, is
      $\prod_{i = 1}^\alpha (\beta_i + 1) \times \prod_{i = 1}^\alpha
      (\beta_i + 1)$. These matrices are dense. In contrast, the
      dimension of the matrix $C$ is
      $(\alpha + 1) \, \prod_{i = 1}^\alpha (\beta_i + 1) \times
      (\alpha + 1) \, \prod_{i = 1}^\alpha (\beta_i + 1)$ (the degree
      of the resultant of the system), but the matrix is structured
      (i.e., multi-Hankel matrix) and sparse; it has at most
      $(\sum_{i = 1}^\alpha \beta_i + \alpha + 1) \, (\alpha + 1) \,
      \prod_{i = 1}^\alpha (\beta_i + 1)$ non-zero positions.
    \end{remark}

    In what follows we present an example of our algorithm in the
    two-parameter eigenvalue problem (2EP).
    The interested reader can found information about the applications
    of 2EP in physics in \cite{gheorghiu_spectral_2012}.
    We consider the 2EP given by the matrices
{\small
\begin{align} \label{exMEP:eq:def}
\begin{array}{c c c}
  M^{(1,0)} := \left[  \begin {array}{cc} -7&-3 \\ \noalign{\medskip}-8&-2 \end {array} \right]
  &
  M^{(1,1)} := \left[ \begin {array}{cc} 12&2 \\ \noalign{\medskip}13&1\end {array} \right]
  &
  M^{(1,2)} := \left[ \begin {array}{cc} -7&-1\\ \noalign{\medskip}-7&-1\end {array} \right] 
  \\[20pt]
  M^{(2,0)} := \left[ \begin {array}{cc} -11&-3\\ \noalign{\medskip}4&1\end {array} \right]
  &
  M^{(2,1)} := \left[ \begin {array}{cc} 7&-1\\ \noalign{\medskip}1&2 \end {array} \right]
  &
  M^{(2,1)} := \left[ \begin {array}{cc} -4&0\\ \noalign{\medskip}-1&-1\end {array} \right]
\end{array}.
\end{align}
}
For simplicity, we
will name the three blocks of variables as $\bm{X},\bm{Y},\bm{Z}$,
instead of $\bm{X}_1,\bm{Y}_1,\bm{Y}_2$. Following
\eqref{eq:bilinearMEP}, we write 2EP as the following bilinear system
{\small
$$
\begin{array}{l  c l}
\left[ \begin {array}{c} f_1 \\ \noalign{\medskip} f_2 \end {array} \right]
= &
\left[ \begin {array}{cc}
         -7\,x_{{0}}+12\,x_{{1}}-7\,x_{{2}}&-3\,x_{{0}}+2\,x_{{1}}-x_{{2}}\\
         \noalign{\medskip}-8\,x_{{0}}+13\,x_{{1}}-7\,x_{{2}}&-2\,x_{{0}}+x_{{1}}-x_{{2}}
\end{array} \right]
& \cdot 
\left[ \begin {array}{c} y_{0}\\ \noalign{\medskip}y_{1} \end {array} \right]
\\[20pt]
\left[ \begin {array}{c} f_3 \\ \noalign{\medskip} f_4 \end {array} \right]
= &
\left[ \begin {array}{cc}
         -11\,x_{{0}}+7\,x_{{1}}-4\,x_{{2}}&-3\,x_{{0}}-x_{{1}}\\
         \noalign{\medskip}4\,x_{{0}}+x_{{1}}-x_{{2}}&x_{{0}}+2\,x_{{1}}-x_{{2}}
       \end {array} \right]
& \cdot 
\left[ \begin {array}{c} z_{0}\\ \noalign{\medskip}z_{{1}} \end {array} \right] 
\end{array} .
$$
} According to \Cref{multilinearMixed:thm:nsolsMixedUnmixed}, the 2EP
should have $4$ different solutions. To solve this system, we
introduce a linear polynomial $f_0 \in \K[\bm{X}]$ that separates the
eigenvalues of the 2EP, that is, if $\bm{\lambda}_1$ and
$\bm{\lambda}_2$ are different eigenvalues, then
$\frac{f_0}{x_i}(\bm{\lambda}_1) \neq
\frac{f_0}{x_i}(\bm{\lambda}_2)$, for some ${x_i} \in \bm{X}$. Then,
we consider a Sylvester-like determinantal formula for the resultant
of $(f_0,\dots,f_4)$ (\Cref{multilinearMixed:thm:sylvType}) and we
solve the original system using eigenvalue and eigenvector
computations as in \cite{bender_bilinear_2018}.

If the MEP problem has a finite number of eigenvalues and all
of them are different, then any generic $f_0 \in \K[\bm{X}]$ separates the eigenvalues.
  In our case, we choose $f_0 := -x_0 + 5 \, x_1- 3 \, x_2$.
  Following \Cref{multilinearMixed:thm:sylvType}, there is a
  Sylvester-type formula for the resultant of
  the system 
  $\bm{f} := (f_0,\dots,f_4)$
  using the degree vector $\bm{m} := ( 1, 1, 1 )$.
  The latter is related 
  to the determinantal data $(\{1,2\},\emptyset,0)$.
The  Weyman complex reduces to
{\small
  \begin{align*}
0 \rightarrow
\left(
  \begin{array}{l }
    \phantom{\oplus \,}
  S_{X}(0) \otimes S_{Y}(1) \otimes S_{Z}(1) \otimes \{ e_0 \} \\
   \oplus \,
    S_{X}(0) \otimes S_{Y}(0) \otimes S_{Z}(1) \otimes \{ e_1 \oplus e_2 \} \\
   \oplus \,
    S_{X}(0) \otimes S_{Y}(1) \otimes S_{Z}(0) \otimes \{ e_3 \oplus e_4 \}
\end{array}
\right)
   \xrightarrow{\delta_1(\bm{m}, \bm{f})} 
  \left(S_{X}(1) \otimes S_{Y}(1) \otimes S_{Z}(1) \otimes \K  \right)
  \rightarrow 0 ,
\end{align*}
}

\noindent
where the map $\delta_1(\bm{m}, \bm{f})$ is a Sylvester map
(\Cref{intro:thm:sylvesterMatrix}).
Hence, the resultant of $\bm{f}$ is the determinant of a matrix $C$
representing this map, which has dimensions $12 \times 12$ (Case 1,
\Cref{multilinearMixed:thm:sizeRes}).
     We split $C$ in
     $\left[ \begin{smallmatrix} C_{1,1} & C_{1,2} \\ C_{2,1} & C_{2,2}
       \end{smallmatrix} \right] $ according to
     \cite[Def.~4.1]{bender_bilinear_2018}.
{\footnotesize
  $$
  \left[
\arraycolsep=2pt
 \begin {array}{c | cccccccc | cccc} & x_{{2}}y_{{0}}z_{{0}}&
      x_{{2}}y_{{0}}z_{{1}}& x_{{2}}y_{{1}}z_{{0}}&
      x_{{2}}y_{{1}}z_{{1}}& x_{{1}}y_{{0}}z_{{0}}&
      x_{{1}}y_{{0}}z_{{1}}& x_{{1}}y_{{1}}z_{{0}}&
      x_{{1}}y_{{1}}z_{{1}}& x_{{0}}y_{{0}}z_{{0}}&
      x_{{0}}y_{{0}}z_{{1}}& x_{{0}}y_{{1}}z_{{0}}&
      x_{{0}}y_{{1}}z_{{1}}\\ \hline
           z_{{0}} e_1&-7&&-1&&12&&2&&-7&&-3&\\
           z_{{1}} e_1&&-7&&-1&&12&&2&&-7&&-3\\
           z_{{0}} e_2&-7&&-1&&13&&1&&-8&&-2&\\
           z_{{1}} e_2&&-7&&-1&&13&&1&&-8&&-2\\
           y_{{0}} e_3&-4&0&&&7&-1&&&-11&-3&&\\
           y_{{1}} e_3&&&-4&0&&&7&-1&&&-11&-3\\
           y_{{0}} e_4&-1&-1&&&1&2&&&4&1&&\\
           y_{{1}} e_4&&&-1&-1&&&1&2&&&4&1\\ \hline
           y_{{0}}z_{{0}} e_0&-3&&&&5&&&&-1&&&\\
           y_{{0}}z_{{1}} e_0&&-3&&&&5&&&&-1&&\\
           y_{{1}}z_{{0}} e_0&&&-3&&&&5&&&&-1&\\
           y_{{1}}z_{{1}} e_0&&&&-3&&&&5&&&&-1\end{array} \right] . $$ }
     \begin{remark}
       If the original MEP has a finite number of 
       eigenvalues, after performing a generic linear change of
       coordinates in the variables $\bm{X}$, we can assume that
       there is no solution of $(f_1,\dots,f_n)$ such that $x_0 = 0$.
     \end{remark}

     \begin{wrapfigure}{r}{0.35\textwidth}
       \vspace{-30pt}
       \begin{center}
         {\scriptsize
           $$
           \widetilde{C_{2,2}} =
           \left[ \begin {array}{rrrr}
                    \frac{7}{4}&0&-\frac{1}{4}&-\frac{1}{2}\\
                    \noalign{\medskip}-\frac{3}{4}& \frac{3}{2}&\frac{9}{4}&2\\
                    \noalign{\medskip}-{\frac {21}{4}}&-3&{\frac {27}{4}}&\frac{5}{2} \\
              \noalign{\medskip}{\frac {69}{4}}&\frac{19}{2}&-{\frac {63}{4}}&-6
              \end {array} \right] .
            $$
          }
        \end{center}
       \vspace{-30pt}
      \end{wrapfigure}
      \vspace{-.2\baselineskip}
     By \cite[Prop.~4.5]{bender_bilinear_2018}, as the
     system $(f_1,\dots,f_n)$ has no solutions such that $x_0 = 0$,
     the matrix $C_{1,1}$ is nonsingular.
     Hence, by \cite[Lem.~4.4]{bender_bilinear_2018}, we have a one
     to one correspondence between the eigenvalues of the 2EP and the
     (classical) eigenvalues of the Schur complement of $C_{2,2}$,
     $\widetilde{C_{2,2}} := C_{2,2} - C_{2,1} C^{-1}_{1,1} C_{1,2}$ :
     each eigenvalue of $\widetilde{C_{2,2}}$ is the evaluation

     \noindent of
     $\frac{f_0}{x_0}$ at an eigenvalue of the original 2EP.
     In our case $\widetilde{C_{2,2}}$ is as it appears at the left.

     Let $p_1,\dots,p_4$ be the four solutions of
     $(f_1,\dots,f_4)$. Then, the eigenvalues of $\widetilde{C_{2,2}}$
     are
     $\frac{f_0}{x_0}(p_1) = 1, \frac{f_0}{x_0}(p_2) = 2,
     \frac{f_0}{x_0}(p_3) = 3$ and
     $\frac{f_0}{x_0}(p_4) = -2$. As $\delta_1(\bm{m},\bm{f})$ is
     a Sylvester-like map, the right eigenspaces of
     $\widetilde{C_{2,2}}$ contain the vector of monomials
     $\bm{v} := \left[ \begin{smallmatrix}
         x_{{0}}y_{{0}}z_{{0}} \\ x_{{0}}y_{{0}}z_{{1}} \\
     x_{{0}}y_{{1}}z_{{0}} \\ x_{{0}}y_{{1}}z_{{1}}
   \end{smallmatrix} \right] $ evaluated at each solutions of
 $(f_1,\dots,f_4)$ \cite[Lem.~5.1]{emiris1996complexity};
 this is as it appears in the table below. If each
 eigenspace has dimension one, then we can recover some coordinates of
 the solutions by inverting the monomial map given by $\bm{v}$.
 
 \begin{wrapfigure}{l}{0.35\textwidth}
   \vspace{-17.5pt}
     $$
     \begin {array}{| c|r|r|r|r|}
       \hline
       \bm{v} &p_1&p_2&p_3&p_4\\ \hline
       x_{{0}}y_{{0}}z_{{0}}&1&1&1&1\\
       x_{{0}}y_{{0}}z_{{1}}&-3&-1&-2&-3 \\
       x_{{0}}y_{{1}}z_{{0}}&1&1&-1&-3\\
       x_{{0}}y_{{1}}z_{{1}}&-3&-1&2&9 \\ \hline
       \end {array}
     $$
       \vspace{-20pt}
\end{wrapfigure}
 For
 example, the $z_0$-coordinate of $p_1$ is non-zero as  $(x_0 \, y_0 \, z_0)(p_1) \neq 0$, and so its $z_1$-coordinate equals
 $\frac{(x_0 \, y_0 \, z_1)}{(x_0 \, y_0 \, z_0)}(p_1) = -3$.
      To compute the remaining coordinates, either we substitute the
     computed coordinates of the solutions in the original system and
     we solve a linear system, or we extend each eigenvector
     $\bm{v}(p_i)$ to $\bm{w}(p_i)$, where
     $\bm{w}(p_i)$ is the solution of the following linear system:
     $$
          \left[ \begin{smallmatrix} C_{1,1} & C_{1,2} \\ C_{2,1} & C_{2,2}
  \end{smallmatrix} \right]  \bm{w}(p_i) = \frac{f_0}{x_0}(p_i) \left[ \begin{smallmatrix} \\[1.5pt] 0 \\[3pt] \bm{v}(p_i) \\[1.5pt]
  \end{smallmatrix} \right], \text{ and so }
     \bm{w}(p_i) = \left[ \begin{smallmatrix} - C_{1,1}^{-1} \cdot C_{1,2} \, \bm{v}(p_i) \\ \bm{v}(p_i) 
       \end{smallmatrix} \right].
     $$
     Each coordinate of $\bm{w}(p_i)$ is a monomial in
     $\K[\bm{X}]_1 \otimes \K[\bm{Y}]_1 \otimes \K[\bm{Z}]_1$
     evaluated at $p_i$. Hence, we can recover the coordinates of
     $p_i$ from $\bm{w_i}(p_i)$ by inverting a monomial
     map. In this case, the solutions to $(f_1,\dots,f_4)$, and so
     eigenvalues and eigenvectors of 2EP are
     {\footnotesize
     \begin{align*}
       \begin{array}{r c | c | c l}        
         & \text{Ext. Eigenvalues}  &  \omit\rlap{\text{\, Eigenvectors}} &  & \\
         & x_0,x_1,x_2  &  y_0,y_1  &  z_0,z_1 & \\ \hline
         p_1  = (& 1,-1,-3 &  1,1  &  1, -3 &) \\
         p_2  = (& 1,3,4  &  1,1  &  1, -1 &) \\
         p_3  = (& 1,1,1  &  1,-1  &  1, -2 &) \\
         p_4  = (& 1,1,2  &  1,-3  &  1, -3 &) 
       \end{array}                                 
     \end{align*}
     }

  \section*{Acknowledgments}
  We thank Laurent Bus\'e, Carlos D'Andrea, and Agnes Szanto for
  helpful discussions and references, Jose Israel Rodriguez for
  pointing out the relation between our paper
  \cite{bender_bilinear_2018} and MEP, and the anonymous reviewers for
  their helpful comments.
     The authors are partially
     supported by ANR JCJC GALOP (ANR-17-CE40-0009), the PGMO grant ALMA
     and the PHC GRAPE. M.~R.~Bender is supported by  ERC under
     the European's Horizon 2020
     research and innovation programme
     (grant No 787840).



       {\footnotesize

}
       

  \newpage
  \appendix

  \section{Additional proofs and examples}
  \subsection{Proof of Lemma~\ref{multilinearMixed:thm:sizeRes}}
  \label{multilinearMixed:proof:sizeRes}
   \begin{proof}[Proof of Lemma~\ref{multilinearMixed:thm:sizeRes}]
   Following \Cref{intro:thm:sizeVectorSpacesKoszulMap}, the size of
   the Koszul determinantal matrix is the degree of the resultant.
   By \Cref{intro:thm:degRes}, it holds,
   $$\text{degree}(\res) = \sum_{k = 0}^N \MHB({\bm{d}_0},\dots,{\bm{d}_{k-1}},{\bm{d}_{k+1}},\dots,{\bm{d}_N}).$$
   As many multidegrees $\bm{d}_k$ are identical, we couple the
   summands in the previous equation.  For each $j \in \{1,\dots,B\}$,
   let $I_j \in \{1,\dots,N\}$ be the index of a polynomial in
   $\bm{F}$ such that
   $F_j \in \K[\bm{X}_1,\dots,\bm{X}_A,\bm{Y}_{I_j}]$.  Recall that
   $\eqs_j$ is the number of polynomials with multidegree equal to
   $\bm{d_{I_j}}$. Hence, we rewrite the degree of the resultant as
   $$\deg(\Res_\P(\bm{d}_0,\dots,\bm{d}_n)) =
   \MHB(\bm{d}_1,\dots,\bm{d}_n)
   +
   \sum_{j = 1}^B \eqs_j \;
   \MHB(\bm{d}_0,\dots,\bm{d}_{I_{j}-1},\bm{d}_{I_{j}+1},\dots,\bm{d}_n).$$

   From \Cref{multilinearMixed:thm:nsolsMixedUnmixed},
   $ \MHB(\bm{d}_1,\dots,\bm{d}_n) = \frac{(\sum_{i = 1}^A
     \alpha_i)!}{\prod_{i = 1}^A \alpha_i!} \cdot \prod_{j = 1}^B
   {\eqs_j \choose \beta_j} = \nsols$.
   By \Cref{intro:thm:bezoutBound}, for every $1 \leq j \leq B$,
   $\MHB(\bm{d}_0,\dots,\bm{d}_{I_j-1},\bm{d}_{I_j+1},\dots,\bm{d}_n)$
   is the coefficient of
   $(\prod_{i=1}^A Z_{X_i}^{\alpha_i}) (\prod_{t=1}^B
   Z_{Y_t}^{\beta_t})$ in
   $$ \Big(\sum_{i=1}^A d_{0,\bm{X}_i} Z_{X_i} + \sum_{t = 1}^B d_{0,\bm{Y}_t} Z_{X_t} \Big)
    \Big(\sum_{i=1}^A Z_{X_i} + Z_{Y_j} \Big)^{\eqs_j - 1}
    \prod_{k \in \{1,\dots,B\} \setminus \{j\}}  \Big(\sum_{i=1}^A Z_{X_i} + Z_{Y_k} \Big)^{\eqs_k}.$$

    Consider the last two factors of the previous equation, that is
    \begin{align}\label{multilinearMixed:eq:polOfCoeffs}
      \Big(\sum_{i=1}^A Z_{X_i} + Z_{Y_j} \Big)^{\eqs_j - 1}
      \prod_{k \in \{1,\dots,B\} \setminus \{j\}}  \Big(\sum_{i=1}^A Z_{X_i} + Z_{Y_k} \Big)^{\eqs_k}.
    \end{align}
    Then
    $$
    \MHB(\bm{d}_0,\dots,\bm{d}_{I_j-1},\bm{d}_{I_j+1},\dots,\bm{d}_n) = \sum_{s = 1}^{A} d_{0,\bm{X}_s} \theta_{j,s}^X + \sum_{l = 1}^{B}
    d_{0,\bm{Y}_l} \theta_{j,l}^Y,
    $$
    where $\theta_{j,s}^X$ is the coefficient of
    $\frac{(\prod_{i=1}^A Z_{X_i}^{\alpha_i}) (\prod_{t=1}^B
      Z_{Y_t}^{\beta_t})}{Z_{X_s}}$ in
    \eqref{multilinearMixed:eq:polOfCoeffs}, and $\theta_{j,t}^Y$ is
    the coefficient of
    $\frac{(\prod_{i=1}^A Z_{X_i}^{\alpha_i}) (\prod_{l=1}^B
      Z_{Y_l}^{\beta_t})}{Z_{Y_t}}$ in
    \eqref{multilinearMixed:eq:polOfCoeffs}.
    After some computations, we have 
   \begin{align*}
     \theta_{j,s}^X  =
   \frac{((\sum_{i=1}^A \alpha_i) - 1)!}{(\alpha_{s} - 1)! \prod_{i \in \{1,\dots, A\} \setminus \{s\}} \alpha_i!}
     {\eqs_j - 1 \choose \beta_j} \prod_{k \in \{1, \dots, B\} \setminus \{j\} } {\eqs_k \choose \beta_k} 
     =
     \nsols \cdot \frac{\alpha_s}{\sum_{i = 1}^A \alpha_i} \frac{\eqs_j - \beta_j}{\eqs_j},
   \end{align*}
   \begin{align*}
   \theta_{j,t}^Y =
   \left\{
   \begin{array}{ll}
     \frac{(\sum_{i=1}^A \alpha_i)!}{\prod_{i  =1}^A \alpha_i!}
     {\eqs_j - 1 \choose \beta_j - 1} \prod_{k \in \{1,\dots,B\} \setminus \{j\}}
     {\eqs_k \choose \beta_k} = \nsols \cdot \frac{\beta_j}{\eqs_j}
          & \text{ if } t = j, \\
     \frac{(\sum_{i=1}^A \alpha_i)!}{\prod_{i  =1}^A \alpha_i!}
     {\eqs_t \choose \beta_t - 1} {\eqs_j - 1 \choose \beta_j} \prod_{k \in \{1,\dots,B\} \setminus \{t,j\}}
     {\eqs_k \choose \beta_k} =
     \nsols \cdot \frac{\beta_t}{\eqs_t - \beta_t + 1} \frac{\eqs_j - \beta_j}{\eqs_j}
          & \text{ otherwise.} \\
   \end{array}\right. 
   \end{align*}
   Using the formulas for  $\theta_{j,s}^X$  and $\theta_{j,t}^Y$
   we get 
   \begin{align*}
   \deg(\Res_\P(\bm{d}_0,\dots,\bm{d}_n)) =
   \MHB(\bm{d}_1,\dots,\bm{d}_n)
   +
   \sum_{j = 1}^B \eqs_j \;
   \MHB(\bm{d}_0,\dots,\bm{d}_{I_{j}-1},\bm{d}_{I_{j}+1},\dots,\bm{d}_n)
     = \\
     \nsols + \sum_{j = 1}^B \eqs_j \;
   (\sum_{s = 1}^{A} d_{0,\bm{X}_s} \theta_{j,s}^X + \sum_{l = 1}^{B}
   d_{0,\bm{Y}_t} \theta_{j,t}^Y)
     =
     \nsols + \sum_{j = 1}^B \eqs_j \;
     (\sum_{s = 1}^{A} d_{0,\bm{X}_s} \theta_{j,s}^X) +
     \sum_{j = 1}^B \eqs_j \;
     (\sum_{l = 1}^{B}
   d_{0,\bm{Y}_t} \theta_{j,t}^Y) .
   \end{align*}

   Next, we simplify the last two summands of the previous equation.
   For the first one, as
   $\sum_{i=1}^A \alpha_i = \sum_{j = 1}^B (\eqs_j - \beta_j)$ and
   for all $s$ it holds 
   $d_{0,\bm{X}_s} = d_{0,\bm{X}_1}$,
   we obtain 
   $$ \sum_{j = 1}^B \eqs_j \;
    \Big(\sum_{s = 1}^{A} d_{0,\bm{X}_s} \theta_{j,s}^X \Big)
    =
    \sum_{j = 1}^B \eqs_j \;
     \Big(
     \sum_{s = 1}^{A} d_{0,\bm{X}_s} \nsols  \frac{\alpha_s}{\sum_{i = 1}^A \alpha_i} \frac{\eqs_j - \beta_j}{\eqs_j}
     \Big)
     =
     \nsols \, d_{0,\bm{X}_1} \sum_{i=1}^A \alpha_i.$$

     For the second one, we perform the following direct calculations
     \begin{multline*}
     \sum_{j = 1}^B \eqs_j \;
     (\sum_{l = 1}^{B}
     d_{0,\bm{Y}_t} \theta_{j,t}^Y)
     =
     \sum_{j = 1}^B \eqs_j \;
     \Big(
     \sum_{t \in \{1,\dots,B\} \setminus \{j\}}
     d_{0,\bm{Y}_t}
     \nsols 
     \frac{\beta_t}{\eqs_t - \beta_t + 1} \frac{\eqs_j - \beta_j}{\eqs_j}
     +
     d_{0,\bm{Y}_j}
     \nsols   \frac{\beta_j}{\eqs_j}
   \Big)
   = \\
        \nsols \;  \sum_{j = 1}^B  
       \Big(
         \sum_{t = 1}^B
           d_{0,\bm{Y}_t}
           \cdot
            \frac{\beta_t}{\eqs_t - \beta_t + 1}
           (\eqs_j - \beta_j)
        -
           d_{0,\bm{Y}_j}
           \cdot
            \frac{\beta_j (\eqs_j - \beta_j)}{\eqs_j - \beta_j + 1}
     +
           d_{0,\bm{Y}_j} \beta_j
         \Big) = \\
                \nsols \;
       \sum_{j = 1}^B  (\eqs_j - \beta_j)
       \Big(
         \sum_{t = 1}^B
           d_{0,\bm{Y}_t}
           \cdot
            \frac{\beta_t}{\eqs_t - \beta_t + 1}
     \Big)
     +
     \nsols \;
     \sum_{j = 1}^B  
     d_{0,\bm{Y}_j} \cdot \frac{\beta_j}{\eqs_j - \beta_j + 1} = \\
     \nsols \;
            (1 + \sum_{i=1}^A \alpha_i )
       \Big(
         \sum_{t = 1}^B
           d_{0,\bm{Y}_t}
           \cdot
           \frac{\beta_t}{\eqs_t - \beta_t + 1}
           \Big).
         \end{multline*}
   At last we have the formula 
   \begin{align*}
     \deg(\Res_\P(\bm{d}_0,\dots,\bm{d}_n)) =
     \nsols
     \Big(1 + d_{0,\bm{X}_1} \sum_{i=1}^A \alpha_i
           + (1 + \sum_{i=1}^A \alpha_i )
          \Big( \sum_{t = 1}^B d_{0,\bm{Y}_t} \cdot
            \frac{\beta_t}{\eqs_t - \beta_t + 1}    \Big)
     \Big).
   \end{align*}
   The proof follows from instantiating the values of
   $d_{0,\bm{X}_1}, d_{0,\bm{Y}_{1}}, \dots, d_{0,\bm{Y}_{B}}$
   according to the multidegree of $f_0$.
 \end{proof}

\subsection{Proof of Theorem \ref{bipartite:thm:caseBipartite}}
\label{proof:bipartiteBilinear}

\begin{proof}[Proof of Theorem \ref{bipartite:thm:caseBipartite}]
  In this proof, we follow the same strategy as in the proof of
\Cref{multilinearMixed:thm:caseUnmixedMixed}.
As we did in \Cref{sec:det-formulas},
we can interpret the various multidegrees of $f_0$,
${\bm{d}_0} = (d_{0,{X}_1},\dots,d_{0,{X}_A}, d_{0,{Y_1}}, \dots,
d_{0,{Y_B}})$, that we want to prove that lead to determinantal
formulas as solutions of the following system of inequalities:
 \begin{align} \label{bipartite:eq:casesForD0}
   \begin{cases}
     (\forall 1 \leq i \leq A) \, & 0 \leq d_{0,\bm{X}_i} \leq 1 \\
     (\forall 1 \leq j \leq B) \, & 0 \leq d_{0,\bm{Y}_j} \leq 1 \\
     & \sum_{i = 1}^A d_{0,\bm{X}_i} \leq 1 \\
     & \sum_{j = 1}^B d_{0,\bm{Y}_j} \leq 1.
   \end{cases}
 \end{align}

 Consider the set $\{0, \dots, N\}$ that corresponds to generic
 polynomials $\bm{F}=(F_0, \dots, F_N)$
 of multidegrees ${\bm{d}_0},\dots,{\bm{d}_N}$
 (\Cref{intro:def:genMultiHom}), where $(F_1,\dots,F_n)$ is a square
 bipartite bilinear system such that, for each $i \in \{1,\dots,A\}$,
 $\sum_{j = 1}^B \eqs_{i,j} \geq \alpha_i$ and for each
 $j \in \{1,\dots,B\}$, $\sum_{i = 1}^A \eqs_{i,j} \geq \beta_j$, and
 ${\bm{d}_0}$ is the multidegree of $F_0$.
 As many of these polynomials have the
 same support,
 similarly to \eqref{multilinearMixed:eq:definitionSetOfIndices},
 we can gather them to simplify the cohomologies of
 \eqref{eq:Kvp}. For that we introduce the following notation.
 For
 each tuple $s_0,s_{1,1},\dots,s_{A,B} \in \N$, let
 $\mathcal{I}_{s_0,s_{1,1},\dots,s_{A,B}}$ be the set of all the
 subsets of $\{0,\dots,N\}$, such that
 \begin{itemize}
 \item For $1 \leq i \leq A$ and $1 \leq j \leq B$, the index
   $s_{i,j}$ indicates that we consider exactly $s_{i,j}$ polynomials
   from $(F_1, \dots, F_N)$ that belong to
   $\Z[\bm{u}][\bm{X}_i,\bm{Y}_j]_{\bm{1}}$.
   
 \item In addition, if $s_0 = 1$, then 0 belongs to all the sets in
 $\mathcal{I}_{s_0,s_{1,1},\dots,s_{A,B}}$
 \end{itemize}
 That is,
   \begin{align}
     \label{bipartite:eq:definitionSetOfIndices}
     \nonumber
     \mathcal{I}_{s_0,s_{1,1},\dots,s_{A,B}} := \Big\{ I : & I \subset \{0,\dots,n\}, \left( 0 \in I
                                                      \!\Leftrightarrow\! s_0 = 1 \right) \text{ and } 
     \\ &  (\forall 1 \leq i \leq A)(\forall 1 \leq  j \leq B) \; s_{i,j} = \#\{k \in I : f_k \in
          \K[\bm{X}_i,\bm{Y}_j]\} \Big\}.
   \end{align}

   As in \Cref{multilinearMixed:thm:rewriteKvp},
   we exploit the sets $\mathcal{I}_{s_0,s_{1,1},\dots,s_{A,B}}$ to
   rewrite the cohomologies $ K_v(\bm{m}) = \bigoplus_{p=0}^{N+1} K_{v,p} \otimes \Z[\bm{u}] $ of \eqref{eq:Kvp}
   in the following way,
    \begin{align}
      \label{bipartite:eq:eqWeyman}
      \nonumber
      K_{v,p}(\bm{m}) = 
      \hspace{-25px}
      \bigoplus_{\substack{0
      \leq s_0 \leq 1 \\
          (\forall 1 \leq i \leq A)(\forall 1 \leq  j \leq B) \; 0 \leq s_{i,j} \leq \eqs_{i,j}  \\
      s_0 + \sum_{i = 1}^A \sum_{j = 1}^B s_{i,j} = p}}
      \hspace{-25px}
        \Big(
        H^{p-v}_{\P}  \big( \bm{m} - \big( \sum_{j = 1}^B s_{1,j},\dots,\sum_{j = 1}^B s_{A,j},
                     \sum_{i = 1}^A s_{i,1},\dots,\sum_{i = 1}^A s_{i,B} \big)
      - s_0 \, {\bm{d}_0} \big)  
      \\[-20px]
        \otimes 
        \bigoplus_{I \in \mathcal{I}_{s_0,s_{1,1},\dots,s_{A,B}}}  \bigwedge_{k \in I} e_k
        \Big)
        .
      \end{align}

      Hence, using the K\"unneth Formula
      (\Cref{intro:thm:kunnethFormula}) with the degree vector
      $\bm{m}$ defined in \Cref{bipartite:thm:caseBipartite}, we have
      the following isomorphisms of cohomologies,
      \begin{multline}
     \label{bipartite:eq:rewriteHpv} 
     H^{p-v}_{\P} \left( \bm{m} - \left( \sum_{j = 1}^B
       s_{1,j},\dots,\sum_{j = 1}^B s_{A,j}, \sum_{i = 1}^A
       s_{i,1},\dots,\sum_{i = 1}^A s_{i,B} \right) - s_0 \, {\bm{d}_0}
   \right) \cong \\
       \bigoplus_{r_{\bm{X}_1} + \dots + r_{\bm{X}_A}  + r_{\bm{Y}_1} + \dots + r_{\bm{Y}_B} = p-v} \;
       \left(   \displaystyle
     \begin{array}{l c}   \displaystyle
                 \bigotimes_{i = 1}^A
       H^{r_{\bm{X}_i}}_{\Pr^{\alpha_i}}\left(\sum_{j = 1}^B (\eqs_{i,j} - s_{i,j}) - \alpha_i + (1-s_0) \, d_{0,X_i} \right) & \text{[Case X]} \\ \displaystyle
       \otimes
         \bigotimes_{j = 1}^B
                                                                                                                                                            H^{r_{\bm{Y}_j}}_{\Pr^{\beta_j}}\left(- 1 - \sum_{i = 1}^A s_{i,j}  - d_{0,Y_j} s_0  \right) & \text{[Case Y]}
     \end{array}
     \right)
   \end{multline}

   We will study the values for
   $p,v,s_0,s_{1,1},\dots,s_{A,B},r_{{\bm{X}_1}},\dots,r_{{\bm{X}_A}},r_{\bm{Y}_1},\dots,r_{\bm{Y}_B}$
   such that $K_{v,p}(\bm{m})$ does not vanish. Clearly, if
   $0 \leq s_0 \leq 1$ and
   $(\forall i \in \{1,\dots,A\}) \, (\forall j \in \{1,\dots,B\}) \;
   0 \leq s_{i,j} \leq \eqs_{i.j}$, then the module
   $\bigoplus_{I \in \mathcal{I}_{s_0,s_{1,1},\dots,s_{A,B}}}
   \bigwedge_{k \in I} e_k$ is not zero. Hence, assuming
   $0 \leq s_0 \leq 1$ and
   $(\forall i \in \{1,\dots,A\}) \, (\forall j \in \{1,\dots,B\}) \;
   0 \leq s_{i,j} \leq \eqs_{i.j}$
   $(\forall i \in \{1,\dots,B\}) \; 0 \leq s_i \leq \eqs_i$, we study
   the vanishing of the modules in the right-side part of
   \eqref{bipartite:eq:rewriteHpv}.
   We will study the cohomologies independently.
   By \Cref{intro:thm:bottFormula}, the modules in the right hand side
   of \eqref{bipartite:eq:rewriteHpv}
   are not zero only when, for $1 \leq i \leq A$,
   $r_{\bm{X}_i} \in \{0,\alpha_i\}$ and, for $1 \leq j \leq B$,
   $r_{\bm{Y}_j} \in \{0,\beta_j\}$.
   At the end of the proof we show that if
   \eqref{bipartite:eq:rewriteHpv} does not vanish then the following
   conditions hold,
      \begin{align} \label{bipartite:eq:valuesForR}
        \begin{array}{| c | c | c |}
                 \hline
          \text{[Case X]} &
          \text{For } 1 \leq i \leq A  &
              \begin{array}{c}
              r_{\bm{X}_{i}} = 0 \\ \displaystyle
              \sum_{j=1}^B (\eqs_{i,j} - s_{i,j}) - \alpha_i + (1  - s_0) \, d_{0,{\bm{X}_i}} \geq 0
              \end{array}
          \\ \hline
          \text{[Case Y]} &
          \text{For } 1 \leq j \leq B & 
              \begin{array}{c}
              r_{\bm{Y}_j} = \beta_j \\ \displaystyle
                                    \sum_{i=1}^A s_{i,j} + s_0 \, d_{0,{\bm{Y}_j}} - \beta_j \geq 0   
              \end{array} \\ \hline
            \end{array}
   \end{align}

   Using \eqref{bipartite:eq:valuesForR}, we study the possible values
   for $v$ such that $K_{v,p}(\bm{m})$ does not vanish. From
   \eqref{bipartite:eq:eqWeyman}, it holds
   $p = \sum_{i = 1}^A \sum_{j=1}^{B} s_{i,j} + s_0$.
     By \Cref{intro:thm:kunnethFormula},
     $p - v = \sum_{i = 1}^A r_{\bm{X}_i} + \sum_{j = 1}^B
     r_{{\bm{Y}_j}}$. Hence, we deduce that, when $K_{v,p}(\bm{m})$
     does not vanish, it holds,
   $$v =
   \sum_{i = 1}^A \sum_{j=1}^{B} s_{i,j} + s_0 -
    \sum_{j = 1}^B \beta_j
    =
    \sum_{j=1}^{B} \left(\sum_{i = 1}^A s_{i,j} - \beta_j \right) + s_0 
    .$$
    We bound the values for $v$ for which $K_{v,p}(\bm{m})$ does not
    vanish.
     \begin{itemize}
     \item First, we lower-bound $v$. 
       Assume that the cohomologies involving ${\bm{Y}_j}$ are not
       zero. Hence, if we sum over $j \in \{1,\dots,B\}$ the
       inequalities of [Case Y], we conclude that
       $$
       0 \leq \sum_{j=1}^{B} (\sum_{i=1}^A s_{i,j} - \beta_j) + s_0 \, \sum_{j=1}^{B} d_{0,{\bm{Y}_j}}  =
       v + s_0 \, \left(\sum_{j=1}^{B} d_{0,{\bm{Y}_j}} - 1 \right).
       $$
       By definition, \eqref{bipartite:eq:casesForD0},
       $0 \leq \sum_{j=1}^{B} d_{0,{\bm{Y}_j}} \leq 1$, and
       $0 \leq s_0 \leq 1$, hence
       $
       0 \leq v.
       $

   \item Finally, we upper-bound $v$.
     Assume that the cohomologies involving ${\bm{X}_j}$ are not
     zero. Hence, if we sum over $i \in \{1,\dots,A\}$ the
     inequalities of [Case X], we conclude that
     \begin{align*}
     0 \leq &\sum_{i = 1}^A \left( \sum_{j=1}^B (\eqs_{i,j} - s_{i,j}) - \alpha_i + (1  - s_0) \, d_{0,{\bm{X}_i}} \right) \\
     = &
     \sum_{i = 1}^A \sum_{j=1}^B \eqs_{i,j} - \sum_{i = 1}^A \alpha_i - \sum_{i = 1}^A \sum_{j=1}^B s_{i,j} +
     (1 - s_0) \, \left(\sum_{i = 1}^A d_{0,{\bm{X}_i}} \right).
     \end{align*}
     Recall that
     $N = \sum_{i = 1}^A \sum_{j=1}^B \eqs_{i,j} = \sum_{i = 1}^A
     \alpha_i + \sum_{i = j}^B \beta_j$ and
     $v = \sum_{i = 1}^A \sum_{j=1}^B s_{i,j} + s_0 - \sum_{i = j}^B
     \beta_j$. Also, as $\bm{d}_0$ is a solution of
     \eqref{bipartite:eq:casesForD0}, it holds
     $0 \leq \sum_{j=1}^{B} d_{0,{\bm{Y}_j}} \leq 1$, and
     $0 \leq s_0 \leq 1$. Hence
     $$
     v = \sum_{i = 1}^A \sum_{j=1}^B s_{i,j} + s_0 - \sum_{i = j}^B
     \beta_j \leq s_0 + (1 - s_0) \, \left(\sum_{i = 1}^A
       d_{0,{\bm{X}_i}} \right) \leq 1.
     $$
   \end{itemize}
  
   We conclude that the possible values for $v$, $p$,
   $r_{\bm{X}_1},\dots,r_{\bm{X}_A}$,
   $r_{\bm{Y}_1},\dots,r_{\bm{Y}_B}$ such that
   \eqref{bipartite:eq:rewriteHpv} is not zero are $v \in \{0, 1\}$,
   the possible values for $r_{\bm{X}_1},\dots,r_{\bm{X}_A}$,
   $r_{\bm{Y}_1},\dots,r_{\bm{Y}_B}$ are the ones in
   \eqref{bipartite:eq:valuesForR} and
   $p = \sum_{j = 1}^B \beta_j + v$. Hence, our Weyman complex looks
   like \eqref{eq:determinantalMap}, where
   $$\delta_1(\bm{m}) : K_{1,\sum_{j = 1}^B \beta_j+1}(\bm{m})
   \to K_{0,\sum_{j = 1}^B \beta_j}(\bm{m})$$
   is a Koszul-type determinantal formula.

   In what follows we prove each case in \eqref{bipartite:eq:valuesForR}.
   
   \textbf{Case (X)}
   We consider the modules that involve the variables in the
     block ${\bm{X}_i}$, for $1 \leq i \leq A$.
     As $(\forall j) \, s_{i,j} \leq \eqs_{i,j}$, $0 \leq s_0 \leq 1$ and
     $0 \leq d_{0,{\bm{X}_i}} \leq 1$, we have
     $
     \sum_{j=1}^B (\eqs_{i,j} - s_{i,j}) - \alpha_i + (1  - s_0) \, d_{0,{\bm{X}_i}} > -1 - \alpha_i.
     $
     Hence, by \Cref{intro:thm:possiblesRi},
        \begin{align*}
          \begin{gathered}
            H^{r_{\bm{X}_i}}_{\Pr^{\alpha_i}}(\sum_{j=1}^B (\eqs_{i,j} - s_{i,j}) - \alpha_i + (1  - s_0) \, d_{0,{\bm{X}_i}})
            \neq 0 \iff \\
            r_{\bm{X}_i} = 0 \quad \text{ and } \quad
            \sum_{j=1}^B (\eqs_{i,j} - s_{i,j}) - \alpha_i + (1  - s_0) \, d_{0,{\bm{X}_i}} \geq 0.
          \end{gathered}
        \end{align*}

     \textbf{Case (Y)}
      We consider the modules that involve the variables in the
     block ${\bm{Y}_j}$, for $1 \leq j \leq B$.
     As $(\forall j \in \{1,\dots,B\}) \, s_{i,j} \geq 0$ and $s_0, d_{0,{\bm{Y}_j}} \geq 0$, then
     $-1 - \sum_{i=1}^A s_{i,j} - s_0 \, d_{0,{\bm{Y}_j}} < 0$, and so by \Cref{intro:thm:possiblesRi},
          \begin{align*}
            \begin{gathered}
       H^{r_{\bm{Y}_j}}_{\Pr^{\beta_j}}(-1 - \sum_{i=1}^A s_{i,j} - s_0 \, d_{0,{\bm{Y}_j}}) \neq 0 \iff \\ 
       r_{\bm{Y}_j} = \beta_j \quad \text{ and } \quad \sum_{i=1}^A s_{i,j} + s_0 \, d_{0,{\bm{Y}_j}} - \beta_j \geq 0.
     \end{gathered}
     \end{align*}

 \end{proof}

\subsection{Example of determinantal formula for bipartite bilinear system}
\label{bipartite:example}
Consider four blocks of variables such that $A = 2$, $B = 2$,
$\alpha = (1,2)$, $\beta = (1,2)$, and
$$
\left\{
\begin{array}{r l}
  X_1 := & \{X_{1,0}, X_{1,1}\} \\
  X_2 := & \{X_{2,0}, X_{2,1}, X_{2,2}\} \\
  Y_1 := & \{Y_{1,0}, Y_{1,1}\} \\
  Y_2 := & \{Y_{2,0}, Y_{2,1}, Y_{2,2}\}.
\end{array}
\right.
$$
Let $(f_1,\dots,f_6)$ be the square bipartite bilinear system
represented by the following graph:
\begin{center}
\begin{tikzpicture}[thick,
  fsnode/.style={draw,circle},
  ssnode/.style={draw,circle},
  every fit/.style={ellipse,draw,inner sep=-2pt,text width=2cm},
  -,shorten >= 3pt,shorten <= 3pt
]

\begin{scope}[start chain=going below,node distance=10mm]
\foreach \i in {1,2}
  \node[fsnode,on chain] (x\i) [label=left: ${\bm{X}_{\i}}$] {};
\end{scope}

\begin{scope}[xshift=4cm,yshift=-0.0cm,start chain=going below,node distance=10mm]
\foreach \j in {1,2}
  \node[ssnode,on chain] (y\j) [label=right: ${\bm{Y}_{\j}}$] {};
\end{scope}


\draw (x1) -- (y1) node [midway, above] {$\eqs_{1,1} = 1$};
\draw (x1) -- (y2) node [pos=0.18, below,sloped] {$\eqs_{1,2} = 2$};
\draw (x2) -- (y1) node [pos=0.82, below,sloped] {$\eqs_{2,1} = 1$};
\draw (x2) -- (y2) node [midway, below] {$\eqs_{2,2} = 2$};
\end{tikzpicture}
\end{center}
We introduce a polynomial $f_0 \in \K[\bm{X}_1,\bm{Y}_1]_{\bm{1}}$ and
consider the following overdetermined system $\bm{f}$ where,
\begin{align}
  \bm{f} := 
    \left\{
    \begin{array}{l r}
      f_0 := &
      \left( a_{{1}}\,y_{1;{0}}+a_{{2}}\,y_{1;{1}} \right) x_{1;{0}}+ \left( a_{{3}}\,y_{1;{0}}+a_{{4}}\,y_{1;{1}} \right) x_{1;{1}}\\
      f_1 := &
            \left( b_{{1}}\,y_{1;{0}}+b_{{2}}\,y_{1;{1}} \right) x_{1;{0}}+ \left( b_{{3}}\,y_{1;{0}}+b_{{4}}\,y_{1;{1}} \right) x_{1;{1}}\\
      f_2 := &
 \left( c_{{1}}\,y_{1;{0}}+c_{{2}}\,y_{1;{1}} \right) x_{2;{0}}+ \left( c_{{5}}\,y_{1;{0}}+c_{{6}}\,y_{1;{1}} \right) x_{2;{1}}+ \left( c_{{3}}\,y_{1;{0}}+c_{{4}}\,y_{1;{1}} \right) x_{2;{2}}\\
      f_3 := &
 \left( d_{{1}}\,y_{2;{0}}+d_{{2}}\,y_{2;{2}}+d_{{3}}\,y_{2;{1}} \right) x_{1;{0}}+ \left( d_{{4}}\,y_{2;{0}}+d_{{5}}\,y_{2;{2}}+d_{{6}}\,y_{2;{1}} \right) x_{1;{1}}\\
      f_4 := &
 \left( e_{{1}}\,y_{2;{0}}+e_{{2}}\,y_{2;{2}}+e_{{3}}\,y_{2;{1}} \right) x_{1;{0}}+ \left( e_{{4}}\,y_{2;{0}}+e_{{5}}\,y_{2;{2}}+e_{{6}}\,y_{2;{1}} \right) x_{1;{1}}\\
      f_5 := &
 \left( g_{{1}}\,y_{2;{0}}+g_{{2}}\,y_{2;{2}}+g_{{3}}\,y_{2;{1}} \right) x_{2;{0}}+ \left( g_{{7}}\,y_{2;{0}}+g_{{8}}\,y_{2;{2}}+g_{{9}}\,y_{2;{1}} \right) x_{2;{1}} \\ & \hfill + \left( g_{{4}}\,y_{2;{0}}+g_{{5}}\,y_{2;{2}}+g_{{6}}\,y_{2;{1}} \right) x_{2;{2}}\\
      f_6 := &
 \left( h_{{1}}\,y_{2;{0}}+h_{{2}}\,y_{2;{2}}+h_{{3}}\,y_{2;{1}} \right) x_{2;{0}}+ \left( h_{{7}}\,y_{2;{0}}+h_{{8}}\,y_{2;{2}}+h_{{9}}\,y_{2;{1}} \right) x_{2;{1}}  \\ & \hfill +  \left( h_{{4}}\,y_{2;{0}}+h_{{5}}\,y_{2;{2}}+h_{{6}}\,y_{2;{1}} \right) x_{2;{2}}
    \end{array}
    \right.
  \end{align}
Following \Cref{bipartite:thm:caseBipartite}, we consider the
degree vector $\bm{m} = (3, 1, -1, -1)$. The vector spaces of the
Weyman complex $K(\bm{m}, \bm{f})$ looks like,
{\small
  \begin{align*}
  K_1(\bm{m}, \bm{f}) =  &  \quad\,\,
                          S_{\bm{X}_1}(0) \otimes S_{\bm{X}_2}(0) \otimes S^*_{\bm{Y}_1}(0) \otimes S^*_{\bm{Y}_2}(-1) \otimes \left\{
                          \begin{array}{l}
                            (e_{0} \wedge e_{3} \wedge e_{4} \wedge e_{5}) \oplus (e_{0} \wedge e_{3} \wedge e_{4} \wedge e_{6}) \; \oplus \\
                            (e_{2} \wedge e_{3} \wedge e_{4} \wedge e_{5}) \oplus (e_{2} \wedge e_{3} \wedge e_{4} \wedge e_{6})
                          \end{array}
  \right\} \\
                        & \oplus S_{\bm{X}_1}(0) \otimes S_{\bm{X}_2}(0) \otimes S^*_{\bm{Y}_1}(-1) \otimes S^*_{\bm{Y}_2}(0) \otimes \left\{
                          \begin{array}{l}
                            (e_{0} \wedge e_{2} \wedge e_{3} \wedge e_{6}) \oplus (e_{0} \wedge e_{2} \wedge e_{4} \wedge e_{5}) \; \oplus \\
                            (e_{0} \wedge e_{2} \wedge e_{4} \wedge e_{6}) \oplus (e_{0} \wedge e_{2} \wedge e_{3} \wedge e_{4}) \; \oplus \\
                            (e_{2} \wedge e_{2} \wedge e_{3} \wedge e_{4}) \oplus (e_{0} \wedge e_{2} \wedge e_{3} \wedge e_{5})
                          \end{array}
  \right\} \\
  K_0(\bm{m}, \bm{f}) = & \quad\,\,
                          S_{\bm{X}_1}(0) \otimes S_{\bm{X}_2}(1) \otimes S^*_{\bm{Y}_1}(0) \otimes S^*_{\bm{Y}_2}(0) \otimes
                          \left\{
                          \begin{array}{l}
                          (e_{0} \wedge e_{3} \wedge e_{4}) \oplus (e_{2} \wedge e_{3} \wedge e_{4})
                          \end{array}
                          \right\}
                                                    \\ &
                                                         \oplus S_{\bm{X}_1}(1) \otimes S_{\bm{X}_2}(0) \otimes S^*_{\bm{Y}_1}(0) \otimes S^*_{\bm{Y}_2}(0) \otimes
                                                         \left\{
                          \begin{array}{l}
                            (e_{0} \wedge e_{3} \wedge e_{5}) \oplus (e_{0} \wedge e_{3} \wedge e_{6}) \; \oplus \\
                            (e_{0} \wedge e_{4} \wedge e_{5}) \oplus (e_{0} \wedge e_{4} \wedge e_{6}) \; \oplus \\
                            (e_{1} \wedge e_{3} \wedge e_{5}) \oplus (e_{1} \wedge e_{3} \wedge e_{6}) \; \oplus \\
                            (e_{1} \wedge e_{4} \wedge e_{5}) \oplus (e_{1} \wedge e_{4} \wedge e_{6}) \; \oplus \\
                            (e_{2} \wedge e_{3} \wedge e_{4})
                          \end{array}
  \right\}
  \end{align*}}

\noindent
The Koszul determinantal matrix representing the map
$\delta_1(\bm{m}, \bm{f})$ between the modules \wrt a monomial basis
is,
    \begin{align*}
      \arraycolsep=2pt
\text{  {\scriptsize $
 \left[ \begin {array}{cccccccccccccccccccccccc} &&&-b_{{1}}&-b_{{3}}&&&&&&&&&&a_{{1}}&a_{{3}}&&&&&&&&\\
 &&&-b_{{2}}&-b_{{4}}&&&&&&&&&&a_{{2}}&a_{{4}}&&&&&&&&\\
                                                 &&&&&-b_{{1}}&-b_{{3}}&&&&&&&&&&a_{{1}}&a_{{3}}&&&&&&\\
 &&&&&-b_{{2}}&-b_{{4}}&&&&&&&&&&a_{{2}}&a_{{4}}&&&&&&\\
 &&&&&&&-b_{{1}}&-b_{{3}}&&&&&&&&&&a_{{1}}&a_{{3}}&&&&\\
 &&&&&&&-b_{{2}}&-b_{{4}}&&&&&&&&&&a_{{2}}&a_{{4}}&&&&\\
 &&&&&&&&&-b_{{1}}&-b_{{3}}&&&&&&&&&&a_{{1}}&a_{{3}}&&\\
 &&&&&&&&&-b_{{2}}&-b_{{4}}&&&&&&&&&&a_{{2}}&a_{{4}}&&\\
 -c_{{1}}&-c_{{5}}&-c_{{3}}&&&&&&&&&&&&&&&&&&&&a_{{1}}&a_{{3}}\\
 -c_{{2}}&-c_{{6}}&-c_{{4}}&&&&&&&&&&&&&&&&&&&&a_{{2}}&a_{{4}}\\
 -g_{{1}}&-g_{{7}}&-g_{{4}}&e_{{1}}&e_{{4}}&&&-d_{{1}}&-d_{{4}}&&&&&&&&&&&&&&&\\
 -g_{{3}}&-g_{{9}}&-g_{{6}}&e_{{3}}&e_{{6}}&&&-d_{{3}}&-d_{{6}}&&&&&&&&&&&&&&&\\
 -g_{{2}}&-g_{{8}}&-g_{{5}}&e_{{2}}&e_{{5}}&&&-d_{{2}}&-d_{{5}}&&&&&&&&&&&&&&&\\
 -h_{{1}}&-h_{{7}}&-h_{{4}}&&&e_{{1}}&e_{{4}}&&&-d_{{1}}&-d_{{4}}&&&&&&&&&&&&&\\
 -h_{{3}}&-h_{{9}}&-h_{{6}}&&&e_{{3}}&e_{{6}}&&&-d_{{3}}&-d_{{6}}&&&&&&&&&&&&&\\
 -h_{{2}}&-h_{{8}}&-h_{{5}}&&&e_{{2}}&e_{{5}}&&&-d_{{2}}&-d_{{5}}&&&&&&&&&&&&&\\
 &&&&&&&&&&&-c_{{1}}&-c_{{5}}&-c_{{3}}&&&&&&&&&b_{{1}}&b_{{3}}\\
 &&&&&&&&&&&-c_{{2}}&-c_{{6}}&-c_{{4}}&&&&&&&&&b_{{2}}&b_{{4}}\\
 &&&&&&&&&&&-g_{{1}}&-g_{{7}}&-g_{{4}}&e_{{1}}&e_{{4}}&&&-d_{{1}}&-d_{{4}}&&&&\\
 &&&&&&&&&&&-g_{{3}}&-g_{{9}}&-g_{{6}}&e_{{3}}&e_{{6}}&&&-d_{{3}}&-d_{{6}}&&&&\\
 &&&&&&&&&&&-g_{{2}}&-g_{{8}}&-g_{{5}}&e_{{2}}&e_{{5}}&&&-d_{{2}}&-d_{{5}}&&&&\\
 &&&&&&&&&&&-h_{{1}}&-h_{{7}}&-h_{{4}}&&&e_{{1}}&e_{{4}}&&&-d_{{1}}&-d_{{4}}&&\\
 &&&&&&&&&&&-h_{{3}}&-h_{{9}}&-h_{{6}}&&&e_{{3}}&e_{{6}}&&&-d_{{3}}&-d_{{6}}&&\\
 &&&&&&&&&&&-h_{{2}}&-h_{{8}}&-h_{{5}}&&&e_{{2}}&e_{{5}}&&&-d_{{2}}&-d_{{5}}&&\end {array} \right] 
      $}}  \end{align*}


\end{document}
